\newcounter{notes}%
\newtheorem{cor}{Corollary}[section]
\newtheorem{theorem}[cor]{Theorem}
\newtheorem{prop}[cor]{Proposition}
\newtheorem{lemma}[cor]{Lemma}
\newtheorem{introthm}{Theorem}
\newtheorem*{rep@theorem}{\rep@title}
\newcommand{\newreptheorem}[2]{%
\newenvironment{rep#1}[1]{%
 \def\rep@title{#2 \ref{##1}}%
 \begin{rep@theorem}}%
 {\end{rep@theorem}}}
\theoremstyle{definition}
\newtheorem{defi}[cor]{Definition}
\newtheorem{question}[cor]{Question}
\newtheorem{remark}[cor]{Remark}
\theoremstyle{plain}
\newcommand{\cE}{{\mathcal E}}
\newcommand{\cT}{{\mathcal T}}
\newcommand{\cML}{{\mathcal M\mathcal L}}
\newcommand{\cQS}{{\mathcal Q\mathcal S}}
\newcommand{\C}{{\mathbb C}}
\newcommand{\DD}{{\mathbb D}}
\newcommand{\HH}{{\mathbb H}}
\newcommand{\N}{{\mathbb N}}
\newcommand{\R}{{\mathbb R}}
\newcommand{\Z}{{\mathbb Z}}
\newcommand{\lambdab}{\bar\lambda}
\newcommand{\RR}{\mathbb R}
\newcommand{\CP}{\mathbb{CP}}
\newcommand{\RP}{\mathbb{RP}}
\newcommand{\fonction}[5]{\begin{array}{cccl}           
#1: & #2 & \longrightarrow & #3 \\
    & #4 & \longmapsto & #5 \end{array}}
\newcommand{\abs}[1]{\left\vert#1\right\vert}
\newcommand{\set}[1]{\left\{#1\right\}}
\newcommand{\crochet}[1]{\left[#1\right]}
\begin{document}

\newcommand{\bsut}{Bers Simultaneous Uniformization Theorem}
\newcommand{\re}{\mathrm{Re}}
\newcommand{\Jd}{\dot{J}}

\newcommand{\QS}{{\mathcal QS}}
\newcommand{\dwp}{d_{WP}}

\title{Bending laminations on convex hulls of anti-de Sitter quasicircles}

\author{Louis Merlin}
\thanks{Partially supported by FNR grant OPEN/16/11405402.}
\address{Department of mathematics, FSTM,
University of Luxembourg, 
Maison du nombre, 6 avenue de la Fonte,
L-4364 Esch-sur-Alzette, Luxembourg
}
\email{louis.merlin@uni.lu}

\author{Jean-Marc Schlenker}
\thanks{Partially supported by FNR grants INTER/ANR/15/11211745 and OPEN/16/11405402. The author also acknowledge support from U.S. National Science Foundation grants DMS-1107452, 1107263, 1107367 ``RNMS: GEometric structures And Representation varieties'' (the GEAR Network).}
\address{Department of mathematics, FSTM, 
University of Luxembourg, 
Maison du nombre, 6 avenue de la Fonte,
L-4364 Esch-sur-Alzette, Luxembourg
}
\email{jean-marc.schlenker@uni.lu}

\date{v1, \today}


\begin{abstract}
  Let $\lambda_-$ and $\lambda_+$ be two bounded measured laminations on the hyperbolic disk $\HH^2$, which ``strongly fill'' (definition below).
  We consider the left earthquakes along $\lambda_-$ and $\lambda_+$, considered as maps from the universal Teichm\"uller space $\cT$ to itself, and we prove that the composition of those left earthquakes has a fixed point. 
  The proof uses anti-de Sitter geometry. Given a quasi-symmetric homeomorphism $u:\RP^1\to \RP^1$, the boundary of the convex hull in $AdS^3$ of its graph in $\RP^1\times\RP^1\simeq \partial AdS^3$ is the disjoint union of two embedded copies of the hyperbolic plane, pleated along measured geodesic laminations. Our main result is that any pair of bounded measured laminations that ``strongly fill'' can be obtained in this manner. 
\end{abstract}

\maketitle

\tableofcontents

\section{Introduction and main results} 


\subsection{Quasi-symmetric homeomorphisms}

We denote by $\DD$ the unit disk in $\C$. Let $f:\DD\to \DD$ be a diffeomorphism. Its {\em conformal distorsion} at a point $x\in \DD^2$ is the smallest real $K\geq 1$ such that $cg_{Eucl}\leq f^*g_{Eucl}\leq Kcg_{Eucl}$ for some $c>0$, and $f$ is $K$-{\em quasi-conformal} if its conformal distorsion is at most $K$ everywhere. It is {\em quasi-conformal} if it is $K$-quasi-conformal for some $K\geq 1$. 

\begin{defi}
  A homeomorphism $u:\RP^1\to \RP^1$ is {\em quasi-symmetric} if it is the boundary value of a quasi-conformal diffeomorphism $f:\DD\to \DD$. (Here $\RP^1$ is identified with $\partial \DD$.) 
\end{defi}

We denote by $\QS$ the space of quasi-symmetric orientation-preserving homeomorphisms from $\mathbb{RP}^1$ to $\mathbb{RP}^1$.

The universal Teichmüller space $\mathcal{T}$ is the quotient of $\mathrm{Homeo}_{qs}(\mathbb{RP}^1)$ by PSL$_2(\mathbb{R})$ (acting on $\mathrm{Homeo}_{qs}(\mathbb{RP}^1)$ by post-composition):
\[\mathcal{T}=\mathrm{PSL}_2(\mathbb{R})\backslash \mathrm{Homeo}_{qs}(\mathbb{RP}^1).\]
It is indeed ``universal'' as it contains a copy of the Teichmüller space of a genus $g$ surface for any $g$, see {\em e.g.} \cite{bers:universal,gardiner-harvey}. 

\subsection{Measured laminations in the hyperbolic plane}

We now consider the hyperbolic plane $\HH^2$, which can be identified with $\DD$ using the Poincaré model. A {\em geodesic lamination} is a closed subset in $\HH^2$ which is a disjoint union of complete geodesics, and a {\em measured geodesic lamination} is a geodesic lamination equiped with a transverse measure, see e.g. \cite{FLP} and Section \ref{ssc:laminations}. We denote by $\cML$ the space of measured laminations on $\HH^2$.

A measured geodesic lamination $\lambda$ is {\em bounded} if there exists $C>0$ such that any geodesic segment of unit length has intersection at most $C$ with $\lambda$. We denote by $\cML_b$ the space of bounded measured geodesic laminations on $\HH^2$. 

A measured geodesic lamination $\lambda\in \cML$ defines a discontinuous map from $\HH^2$ to itself called a {\em left earthquake} along $\lambda$, see \cite{thurston-earthquakes,bonahon-extension}. In the simpler case when the support of $\lambda$ is discrete and each leaf has an atomic weight, the left (resp. right) earthquake along $\lambda$ corresponds to cutting $\HH^2$ along each leaf of $\lambda$, sliding the left (resp. right) side by a distance equal to the weight, and gluing back. We denote the left (resp. right) earthquake along $\lambda$ by $E^l_\lambda$ (resp. $E^r_\lambda$). (Note that the definition depends on the orientation chosen along $\lambda$, but the resulting map does not.)

Thurston \cite{thurston-earthquakes} proved that:
\begin{enumerate}
\item for any bounded measured lamination $\lambda$, the left earthquake $E^l_\lambda$ extends as a quasi-symmetric homeomorphism from $\partial_\infty \HH^2$ (identified with $\RP^1$) to itself,
\item any quasi-symmetric homeomorphism from $\partial_\infty \HH^2$ to itself is the boundary value of the left earthquake along a unique bounded measured lamination. 
\end{enumerate}

It is useful here to consider earthquakes as maps from the universal Teichm\"uller space to itself.

\begin{defi}
  We denote by $\cE^l:\cML_b\times \cQS\to \cQS$ the map defined as
  $$ \cE^l(\lambda)(u)=E^l(u_*\lambda)\circ u~, $$
  and similarly for $\cE^r$.
\end{defi}

Finally we need a notion of ``filling'' pair of measured bending laminations.

\begin{defi} \label{df:fill}
  Let $\lambda$ and  $\mu$ be two mesured laminations on  $\HH^2$. We say that $\lambda$ and $\mu$ \textit{strongly fill} if, for any $\varepsilon>0$, there exists $c>0$ such that, if $\gamma$ is a geodesic segment in $\mathbb{H}^2$ of length at least $c$, 
\[i(\gamma,\lambda)+i(\gamma,\mu)\geqslant \varepsilon~.\]
\end{defi}

For instance, if $\lambda_S$ and $\mu_S$ are two measured lamination on a closed surface $S$ that together fill, then their lifts $\lambda,\mu$ are bounded measured laminations on $\HH^2$ that strongly fill in our sense. 

\subsection{Fixed points of compositions of earthquakes}

We can now give a first formulation of our main result.

\begin{introthm}\label{tm:earthquakes}
  Let $\lambda_-, \lambda_+\in \cML_b$ be two laminations that strongly fill. There exists a quasi-symmetric homeomorphism $u:\RP^1\to \RP^1$ such that $\cE^l(\lambda_l)(u)=\cE^r(\lambda_r)(u)$.
\end{introthm}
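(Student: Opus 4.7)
The plan is to reformulate Theorem~\ref{tm:earthquakes} as a surjectivity statement for the bending data of convex hulls of quasi-circles in $\AdS^3$, as announced in the abstract, and then to prove surjectivity by approximating arbitrary strongly filling data by equivariant data coming from closed surfaces and taking a limit.

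Given $u\in\cQS$, its graph in $\partial\AdS^3\simeq\RP^1\times\RP^1$ is an achronal topological circle, and its convex hull $C(u)\subset\AdS^3$ has two boundary components $\partial_\pm C(u)$, each an intrinsically hyperbolic pleated plane bent along a bounded measured lamination $\mu_\pm(u)$. Following Mess's analysis in the equivariant setting and its extension to the universal setting, $u$ is recovered from the two isometries $\HH^2\to\partial_\pm C(u)$ by a product of a left and a right earthquake, which identifies the equation $\cE^l(\lambda_l)(u)=\cE^r(\lambda_r)(u)$ with the pair of prescriptions $\mu_-(u)=\lambda_l$, $\mu_+(u)=\lambda_r$ (after possibly reindexing the two sides). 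Hence Theorem~\ref{tm:earthquakes} is equivalent to showing that the map $\Phi\colon u\mapsto(\mu_-(u),\mu_+(u))$ takes the value $(\lambda_-,\lambda_+)$ on any strongly filling pair.

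To prove this I would use the corresponding result for closed surfaces as a black box: for a cocompact Fuchsian group $\Gamma$ and a pair of $\Gamma$-invariant bounded laminations descending to a filling pair on $\HH^2/\Gamma$, an equivariant $u\in\cQS$ realizing the prescribed bending data exists (the Bonsante-Schlenker-type theorem for globally hyperbolic maximal AdS manifolds). Given arbitrary strongly filling bounded $(\lambda_-,\lambda_+)$, I would approximate them by pairs $(\lambda_-^{(n)},\lambda_+^{(n)})$, invariant under cocompact Fuchsian groups $\Gamma_n$, descending to filling pairs on $\HH^2/\Gamma_n$, and converging to $(\lambda_-,\lambda_+)$ uniformly on compact sets with strong filling constants that are uniform in~$n$. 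A natural construction is to restrict the original data to a large round disk in $\HH^2$, modify it slightly near the boundary so that it glues equivariantly under the generators of a suitably chosen $\Gamma_n$, and extend by the $\Gamma_n$-action.

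The equivariant case then produces $u_n\in\cQS$ with $\Phi(u_n)=(\lambda_-^{(n)},\lambda_+^{(n)})$, and the main task, and the principal obstacle, is to show that after normalizing by $\PSL_2(\RR)\times\PSL_2(\RR)$ the sequence $u_n$ is precompact in $\cQS$. An upper bound on the thickness of $C(u_n)$ follows from the uniform bound on the total mass of $\lambda_\pm^{(n)}$ on unit arcs, which controls the total dihedral angle crossed by any timelike path in the hull. The crucial lower bound, preventing $C(u_n)$ from collapsing onto a single spacelike plane, is exactly what the strong filling hypothesis is designed to provide: if $C(u_n)$ degenerated, one could find arbitrarily long geodesic segments in $\partial_\pm C(u_n)$ along which the combined bending mass of $\lambda_-^{(n)}$ and $\lambda_+^{(n)}$ tends to zero, contradicting Definition~\ref{df:fill} in the limit. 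Combining these two bounds yields a uniform upper bound on the quasi-conformal dilatation of~$u_n$; Ascoli-Arzelà then extracts a limit $u\in\cQS$, and continuity of $\Phi$ along sequences of uniformly bounded dilatation---which must be proved separately, by combining compactness of the pleated boundaries with weak-$\ast$ upper and lower semicontinuity of the bending measure---gives $\Phi(u)=(\lambda_-,\lambda_+)$ and completes the proof.
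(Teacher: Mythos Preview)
Your reduction of Theorem~\ref{tm:earthquakes} to the bending-lamination problem (Theorem~\ref{tm:bending}) via Mess's relations matches the paper exactly, and the overall plan --- approximate, realize each approximation by a known existence result, extract a limit --- is also the paper's plan. The substantive difference is the approximation scheme: the paper approximates $(\lambda_-,\lambda_+)$ by \emph{polyhedral} pairs (finitely many weighted diagonals of an ideal polygon) and realizes each as the bending data of an ideal polyhedron in $\AdS^3$ via the dihedral-angle theorem of \cite{idealpolyhedra}; you propose instead to approximate by $\Gamma_n$-equivariant pairs for cocompact Fuchsian $\Gamma_n$ and invoke the closed-surface result of \cite{earthquakes}.

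The equivariant approximation step is a genuine gap. A measured lamination is a union of complete geodesics; there is no evident way to ``restrict to a large disk, modify near the boundary, and extend by the $\Gamma_n$-action'', since the $\Gamma_n$-translates of a leaf will typically cross the original leaves and destroy the lamination property. Producing, for an arbitrary bounded strongly-filling pair, a sequence of cocompact-invariant pairs that converge to it while remaining laminations, remaining uniformly bounded, and filling on each $\HH^2/\Gamma_n$, is a serious construction that you have not supplied. The paper's polyhedral scheme sidesteps periodicity entirely, but even there the construction is delicate (three modification steps in Section~3), and it is precisely in \emph{that} construction --- not in the compactness argument --- that strong filling is actually used (Lemma~\ref{lem:parallel}, Lemma~\ref{lem:endpoints}, and Lemma~\ref{lem:4'}).

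This points to the second issue: your compactness paragraph misidentifies the failure mode. Collapse of $C(u_n)$ onto a spacelike plane (width $\to 0$) means $u_n$ tends to a M\"obius map, which is harmless for precompactness in $\cQS$; the dangerous degeneration is width $\to\pi/2$, i.e.\ convergence to a rhombus, where the quasi-symmetric constant blows up. In both the paper and your setup this is ruled out by \emph{boundedness} of the bending data alone (Lemma~\ref{lem:reciprocalbounds} and Lemma~\ref{lm:w}), not by strong filling. In fact in your scheme compactness would be easier than you suggest, since $\mu_\pm(u_n)=\lambda^{(n)}_\pm$ on the nose and these are uniformly bounded by hypothesis --- provided the approximation step could be carried out.
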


\begin{question} \label{q:unique}
  In this setting, is $u$ unique? 
\end{question}

Another way to state Theorem \ref{tm:earthquakes}, reminiscent of the main result in \cite{earthquakes}, is that if $\lambda_l, \lambda_r\in \cML$ strongly fill, then $\cE^l(\lambda_l)\circ\cE^l(\lambda_r):\cQS\to \cQS$ has a fixed point. Question \ref{q:unique} is equivalent to asking whether this fixed point is unique.

The proof of Theorem \ref{tm:earthquakes} can be found in Section \ref{sc:earthquakes}, where it is proved that it follows from Theorem \ref{tm:bending} below.

\subsection{The anti-de Sitter space and its boundary}

The proof of Theorem \ref{tm:earthquakes} uses anti-de Sitter geometry. Anti-de Sitter space is the Lorentzian cousin of hyperbolic space. In dimension $3$, it can be defined as the projectivisation of a quadric in the flat space $\R^{2,2}$ of signature $(2,2)$:
$$ AdS^3= \{ x\in \R^{2,2}~|~\langle x,x\rangle=-1\}/\{ \pm 1\} $$
with the induced metric. It is a Lorentzian space of constant curvature $-1$, homeomorphic to $\DD\times S^1$.

This space $AdS^3$ has a projective model analogous to the Klein model of $\HH^3$, where a ``hemisphere'' of $AdS^3$ is mapped to the interior of the quadric of equation $x^2+y^2=1+z^2$ in $\RR^3$. $AdS^3$ is in this manner equiped with a ``projective boundary'' $\partial AdS^3$. This projective boundary is naturally endowed with a conformal Lorentzian structure, analogous to the conformal structure on $\partial_\infty\HH^3$, see Section \ref{ssc:AdS}. In fact $\partial AdS^3$ can be identified with $\RP^1\times \RP^1$, with $\{ x\}\times \RP^1$ or $\RP^1\times \{ y\}$ corresponding to either
\begin{itemize}
\item the isotropic lines of the Lorentzian conformal structure on $\partial AdS^3$, or
\item the lines on the quadric of equation $x^2+y^2=z^2+1$ in $\R^3$. 
\end{itemize}

\subsection{Quasicircles in $\partial AdS^3$}

The following definition seems to be the natural analog in $AdS$ geometry of the notion of quasi-circles in $\CP^1$. Quasi-circles in $\partial AdS^3$ are ``space-like'' in a weak sense in $\partial AdS^3$ equiped with its conformal Lorentzian structure -- the proper notion here being that they are acausal meridians, with a limited regularity. 

\begin{defi} \label{df:quasicircle}
  A {\em quasicircle} in $\partial AdS^3$ is the graph of a quasi-symmetric homeomorphism from $\RP^1$ to $\RP^1$, in the identification of $\partial AdS^3$ with $\RP^1\times \RP^1$.
\end{defi}

There are a number of reasons to believe that, albeit being quite different in appearance, this is actually the ``correct'' analog of quasicircles in $\CP^1$. For instance, Mess \cite{mess,mess-notes} showed that globally hyperbolic 3-dimensional $AdS$ spacetimes are analogous in deep ways to quasifuchsian hyperbolic manifolds, and their limit set in $\partial AdS^3$ is a quasicircle in the sense of Definition \ref{df:quasicircle}, just as the limit set of a quasifuchsian hyperbolic manifold is a quasicircle in $\CP^1$. Another analogy appears in \cite{convexhull}, where quasicircles appear in both the hyperbolic and $AdS$ setting as ideal boundaries of pleated surfaces and constant curvature surfaces. 

We will also use a natural notion of {\em parameterized} quasicircle, already seen in \cite{convexhull}.

\begin{defi}
  A {\em parameterized quasicircle} in $\partial AdS^3$ is a map $u:\RP^1\to \partial AdS^3$ such that, under the identification of $\partial AdS^3$, the composition on the left of $u$ with either the left or the right projection is quasi-symmetric.
\end{defi}

It follows from this definition that the image of a parameterized quasicircle is a quasicircle according to Definition \ref{df:quasicircle}.

\subsection{Bending laminations on the boundary of the convex hull}

Given a quasi-circle $C\subset \partial AdS^3$, its convex hull in the projective model of $AdS^3$ is a convex subset of $AdS^3$ with a boundary composed of two space-like surfaces $\partial_\pm CH(C)$. The geometry of those surfaces was analysed by Mess \cite{mess} in analogy with the hyperbolic convex hull of quasi-circles in $\CP^1$. In both cases the boundaries are the disjoint unions of two pleated hyperbolic planes, that is, isometrically embedded copies of the hyperbolic plane pleated along measured lamination $\lambda_-, \lambda_+$.

\begin{introthm} \label{tm:bending}
Let $\lambda_-, \lambda_+\in \cML_b$ two bounded measured laminations that strongly fill. There exists a parameterized quasicircle $u:\mathbb{RP}^1\to \partial AdS^3$ such that the measured bending laminations on the upper and lower boundary components of $CH(u(\RP^1))$ are $u_*(\lambda_+)$ and $u_*(\lambda_-)$, respectively.     
\end{introthm}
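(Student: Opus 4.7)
The overall strategy is to reduce the statement to a finite-dimensional polyhedral problem and then pass to a limit, in the spirit of the Bonahon--Otal theorem on realization of bending laminations on convex cores of quasifuchsian manifolds. The plan has four steps: approximation by simplicial laminations, construction in the polyhedral setting, a compactness argument using the strong filling hypothesis, and continuity of the bending data in the limit.

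First, I would approximate $\lambda_\pm$ by sequences $(\lambda_-^n, \lambda_+^n)\in\cML_b\times\cML_b$ whose supports are locally finite unions of geodesics carrying atomic weights, with $\lambda_\pm^n\to\lambda_\pm$ weakly. The key point is to arrange the approximation so that the pairs $(\lambda_-^n, \lambda_+^n)$ also strongly fill with constants uniform in $n$. Since the limit pair is bounded and strongly fills, this should be possible by a careful simplicial replacement at a scale controlled by the filling constants in Definition \ref{df:fill}.

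In the polyhedral case, the candidate convex hull $CH_n\subset AdS^3$ has upper and lower boundaries made of totally geodesic pieces of $\HH^2\hookrightarrow AdS^3$ meeting along spacelike geodesics with prescribed dihedral angles, namely the atomic weights of $\lambda_\pm^n$. Locally two adjacent pieces can be explicitly glued with the required bending; the substantive task is to carry this out globally so as to produce a convex subset of $AdS^3$ bounded by a parameterized quasicircle $u_n:\RP^1\to\partial AdS^3$ with $u_{n*}\lambda_\pm^n$ as bending laminations. I would formulate this as a fixed-point problem on a finite-dimensional moduli space of polyhedral candidates indexed by the combinatorics of the support, in analogy with the polyhedral realization arguments in the hyperbolic setting, with the strong filling assumption providing the non-degeneracy required to apply a topological fixed-point theorem.

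The compactness step then extracts from $(u_n)$ a subsequential limit $u$, after normalizing by $\PSL_2(\R)\times\PSL_2(\R)$ acting on $\partial AdS^3\simeq\RP^1\times\RP^1$. The uniform strong filling of $\lambda_\pm^n$ gives a lower bound on the bending along every sufficiently long geodesic of $\partial_\pm CH_n$, which, via the extrinsic geometry of these pleated surfaces, yields a uniform quasi-symmetric distortion bound for $u_n$; this rules out the convex hulls $CH_n$ becoming degenerate or the quasicircles collapsing to an arc on $\partial AdS^3$. An Arzel\`a--Ascoli argument then produces the limit $u$, and continuity of the convex hull and of the bending lamination on the space of quasicircles, building on \cite{convexhull}, identifies the bending laminations of $CH(u(\RP^1))$ with the weak limits $u_*\lambda_\pm$. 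The main obstacle is the uniform compactness estimate: translating the intrinsic filling hypothesis on laminations in $\HH^2$ into a uniform extrinsic control of quasicircles in $\partial AdS^3$ is delicate, since the two types of data live in different ambient spaces and are related only through the developing maps of $\partial_\pm CH_n$. One must carefully track how the holonomy along a geodesic of prescribed $\lambda_\pm$-mass transports to the boundary, and rule out degenerations in which $\partial_+ CH_n$ and $\partial_- CH_n$ approach each other along arbitrarily long directions.
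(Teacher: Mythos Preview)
Your four-step strategy matches the paper's architecture closely: approximate by finite laminations, realize each pair as the bending data of an ideal polyhedron in $AdS^3$, extract a convergent subsequence via a uniform quasi-symmetry bound, and identify the limiting bending data. Two of the steps, however, are carried out in the paper rather differently from what you sketch, and in one place your intuition for the compactness mechanism is misaligned.

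For the polyhedral realization, the paper does not set up a fixed-point problem. Instead it invokes \cite[Theorem~1.4]{idealpolyhedra}, which characterizes the weighted planar graphs that arise as $1$-skeleta with exterior dihedral angles of ideal polyhedra in $AdS^3$: a Hamiltonian cycle (the equator), negative weights on equatorial edges, positive on the others, a vanishing condition on face boundaries in the dual, and a strict positivity on non-facial dual cycles. The work in Section~3 is then combinatorial: perturb $\lambda_\pm$ to rational laminations, place vertices on $\partial_\infty\HH^2$ so that each equatorial interval carries equal mass, and adjust the resulting graph through two local modifications (Lemmas~\ref{lem:4'} and~\ref{lem:splitting}) until the four conditions hold. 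The strong filling hypothesis enters here, via Lemma~\ref{lem:endpoints}, to guarantee that endpoints of leaves of $\lambda_+^n$ and of $\lambda_-^n$ stay definitely apart, so that vertices can be two-colored and the required extra edges added without creating crossings.

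For compactness, the danger is not that the bending becomes too small but that the bending \emph{on the polyhedra} $\lambdab^n_\pm$ becomes unbounded: the data $\lambda^n_\pm$ are controlled by construction, but the induced metrics on $\partial_\pm P_n$ are not a priori, so the pulled-back laminations need not stay bounded. The paper's mechanism is the width. If the time distance between $\partial_-P_n$ and $\partial_+P_n$ were not bounded away from $\pi/2$, a normalized subsequence would converge to a rhombus (Lemma~\ref{lm:w}); this is excluded by the elementary hyperbolic estimate of Lemma~\ref{lem:reciprocalbounds}, which uses only the boundedness of $\lambda_\pm$. The width bound then forces $\lambdab^n_\pm$ to be uniformly bounded (Lemma~\ref{lm:bounded}), and since $\pi_L\circ u^n_\pm$ and $\pi_R\circ u^n_\pm$ are restrictions of earthquakes along $\lambdab^n_\pm$, uniform $(K,\varepsilon)$-quasi-symmetry follows (Lemma~\ref{lm:qsym}). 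So the degeneration to rule out is the two pleated boundaries separating toward a rhombus, not ``approaching each other along arbitrarily long directions'' as you wrote; your description of the obstacle has the geometry reversed.
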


Theorem \ref{tm:bending} might not be optimal, in the sense that the conditions that $\lambda_-, \lambda_+$ fill strongly might not be necessary. In fact the only {\em necessary} conditions that we know on $\lambda_-, \lambda_+$ is that they must fill in a much weaker sense (each complete geodesic in $\HH^2$ has positive intersection with either $\lambda_-$ or $\lambda_+$, see Section \ref{ssc:necessary}). In Section \ref{ssc:example}, we show by an example that this ``weak'' filling condition is not sufficient.

Question \ref{q:unique} is equivalent to asking whether the parameterized quasicircle $u$ in Theorem \ref{tm:bending} is unique (up to post-composition by a isometry of $AdS^3$). 

\subsection{Related results}

The results presented here are related to a number of recent results in hyperbolic or $AdS$ geometry.

\subsubsection*{Quasifuchsian $AdS$ spacetimes}

This corresponds to the case where the quasicircle $C$ is invariant under the action of a surface group, which acts on the domain of dependence of $C$ 
with quotient a globally hyperbolic compact maximal $AdS$ spacetime. The past and future boundary components of $CH(C)$ are then also invariants, and their quotients by the surface group action are the future and past boundary components of the convex core of the quotient spacetime.
The measured laminations $\lambda_\pm$ can be considered as measured laminations on a closed surface $S$. Theorems \ref{tm:earthquakes} and \ref{tm:bending} then reduce to the main results in \cite{earthquakes}.

Here, too, uniqueness remains elusive.

\subsubsection*{Quasifuchsian hyperbolic manifolds}

The situation is similar for quasifuchsian manifolds. In this case, it was proved by Bonahon and Otal \cite{bonahon-otal} that any pair of measured laminations on a closed surface, that fill and have no closed leaf with weight larger than $\pi$, can be realized as the measured bending lamination on the boundary of the convex core of a quasifuchsian manifold. Uniqueness, however, is only known for laminations whose support is a multicurve.

The analog of Theorem \ref{tm:bending} in the hyperbolic context -- but without group action -- is not known.

\subsubsection*{$K$-surfaces in $AdS^3$}

Theorem \ref{tm:bending} can also be considered when the boundaries of the convex hull of $C$ is replaced by a pair of convex surfaces of constant curvature $K<-1$ in $AdS^3$. The bending measure is then replaced by the {\em third fundamental forms} of the $K$-surfaces, and the analog of Theorem \ref{tm:bending} for those $K$-surfaces is proved in \cite{convexhull}. There is an analog of Theorem \ref{tm:earthquakes} associated to those $K$-surfaces, where earthquakes are replaced by {\em landslides} as introduced in \cite{cyclic,cyclic2}.

\subsection{Examples and limitations}

We will see in Section \ref{sc:examples} that although our main results are presumably not optimal, the precise statement of an optimal result is not quite as simple as one could imagine.

\subsection*{Acknowledgement}

The second-named author would like to thank Francesco Bonsante for useful conversations related to the content of this paper.















\section{Backbground material.} 

\subsection{Cross-ratios}

In the sequel, we will also use a characterization of quasi-symmetric homeomorphisms in terms of cross-ratios. In order to state it, we denote the cross-ratio of a 4-tuple of points $(a,b,c,d)$ in $\left(\mathbb{R}\cup\set{\infty}\right)^4$ by $cr(a,b,c,d)$:
\[cr(a;b;c;d)=\frac{(c-a)(d-b)}{(b-a)(d-c)}.\]
The map $cr$ is invariant under the diagonal action of the Möbius group and thus determines a well-defined map
\[cr : \left( \mathbb{RP}^1\right)^4\rightarrow \mathbb{RP}^1\] when we identify $\mathbb{R}\cup\set{\infty}$ with $\mathbb{RP}^1$.
We say that $(a,b,c,d)$ are in symmetric position if they are the images of $(-1,0,1,\infty)$ by a M\"obius transformation (for the diagonal action on $\left(\mathbb{RP}^1\right)^4$). Equivalently $cr(a;b;c;d)=-1$.

\begin{lemma}[\cite{fletchermarkovic}]
The homeomorphism $u:\mathbb{RP}^1\rightarrow\mathbb{RP}^1$ is $k$-quasi-symmetric if and only if there exists $k'$ such that, for any symmetric 4-tuple of points $(a,b,c,d)$, then
\[-k\leqslant cr(u(a);u(b);u(c);u(d))\leqslant -\frac{1}{k'}\]
The constant $k$ goes to infinity if and only of $k'$ does.
\end{lemma}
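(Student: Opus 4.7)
The plan is to reduce the statement, via the M\"obius invariance of the cross-ratio, to the classical Beurling--Ahlfors characterization of quasi-symmetric homeomorphisms on the real line, and then cite the Beurling--Ahlfors extension theorem (or one of its quantitative refinements) to bridge between that real-line definition and the disk-extension definition used in this paper.

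First I would fix a symmetric quadruple $(a,b,c,d)$ and normalize. Since the cross-ratio is invariant under the diagonal action of $\PSL_2(\RR)$, I can choose a M\"obius transformation $\varphi$ of $\RP^1$ sending $(a,b,c,d)$ to $(-1,0,1,\infty)$, and another M\"obius transformation $\psi$ sending $u(d)$ to $\infty$. Because pre- and post-composition by M\"obius transformations preserves both quasi-symmetry (M\"obius transformations of the disk being isometries, hence $1$-quasi-conformal) and the value of the cross-ratio, replacing $u$ by $\tilde u=\psi\circ u\circ \varphi^{-1}$ reduces me to a homeomorphism $\tilde u:\RP^1\to \RP^1$ with $\tilde u(\infty)=\infty$, i.e.\ to an orientation-preserving homeomorphism of $\RR$. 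In this normalization the cross-ratio
\[ cr(\tilde u(-1);\tilde u(0);\tilde u(1);\infty)\]
simplifies (in the limit $d\to\infty$) to the quotient $(\tilde u(1)-\tilde u(0))/(\tilde u(0)-\tilde u(-1))$ of consecutive increments, up to the convention sign.

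Second, I would identify the cross-ratio inequalities with the classical Beurling--Ahlfors condition. Ranging over all symmetric quadruples is the same as ranging over all triples $(x-t,x,x+t)$ with $t>0$ (together with the choice of the fourth point at infinity, which is absorbed into $\psi$). The bound $-k\le cr\le -1/k'$ on the cross-ratio then translates exactly to
\[ \frac{1}{k'}\le \frac{\tilde u(x+t)-\tilde u(x)}{\tilde u(x)-\tilde u(x-t)}\le k,\]
which is the Beurling--Ahlfors $M$-quasi-symmetry condition for $\tilde u$ on the line, with $M=\max(k,k')$. The Beurling--Ahlfors theorem asserts that this is equivalent to the existence of a quasi-conformal extension $f:\DD\to \DD$ with a dilatation $K(f)$ controlled by $M$, and conversely that a quasi-conformal extension gives a bound $M$ controlled by $K(f)$; this is precisely the definition of quasi-symmetry adopted in the text.

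Finally, to get the quantitative assertion that $k\to\infty$ iff $k'\to\infty$, I would chase constants through the above reduction. In one direction, if $k'$ stays bounded while $k\to\infty$, applying the argument to $u^{-1}$ and to the reversed quadruple $(d,c,b,a)$ (whose cross-ratio is the reciprocal, up to sign) exchanges the roles of $k$ and $k'$, and quasi-symmetry is stable under inversion; this forces $k$ to remain bounded as well. The main (and only real) obstacle is keeping track of the precise sign convention in the cross-ratio formula and the way the normalization $\tilde u(\infty)=\infty$ interacts with orientation, but this is bookkeeping rather than substance; once done, the equivalence and its quantitative form drop out of Beurling--Ahlfors.
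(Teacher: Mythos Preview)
The paper does not give its own proof of this lemma: it is quoted with a citation to \cite{fletchermarkovic} and nothing further. So there is no in-paper argument to compare against; your sketch is essentially the standard proof one finds in that reference (or in Ahlfors' lectures), namely reducing the cross-ratio condition to the classical Beurling--Ahlfors $M$-condition on the line and then invoking the Beurling--Ahlfors extension theorem.

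One point worth tightening: in your normalization the M\"obius maps $\varphi,\psi$ depend on the chosen quadruple, so $\tilde u=\psi\circ u\circ\varphi^{-1}$ is a \emph{different} map for each quadruple, and you cannot literally feed a single $\tilde u$ into the Beurling--Ahlfors theorem. The clean way to organize the two directions is asymmetric. For ``cross-ratio bound $\Rightarrow$ qc extension'', post-compose $u$ \emph{once} by a M\"obius map so that the resulting homeomorphism fixes $\infty$; this changes neither image cross-ratios nor the qc extension constant. Then restricting to the symmetric quadruples with last entry $\infty$ recovers exactly the Beurling--Ahlfors ratio for $u|_{\RR}$, and the extension theorem applies. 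For ``qc extension $\Rightarrow$ cross-ratio bound'', your per-quadruple normalization is fine: $\tilde u$ has the same qc constant as $u$, and the Beurling--Ahlfors bound at the single point $0$ (with $t=1$) depends only on that constant, giving the required uniform estimate. Your final remark on $k\to\infty\Leftrightarrow k'\to\infty$ via the quadruple reversal $(a,b,c,d)\mapsto(d,c,b,a)$ is correct once the sign/reciprocal bookkeeping is done; you could also simply note that the Beurling--Ahlfors $M$-condition is symmetric in the two half-increments.
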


\subsection{Measured laminations.}
\label{ssc:laminations}


We defined in the introduction a lamination $\lambda$ as a closed set which is the disjoint union of complete geodesics in $\mathbb{H}^2$. A transverse invariant measure associated to a lamination $\lambda$ is a non-negative Radon measure defined on each embedded differentiable arc $\gamma$ which is transverse to $\lambda$, and such that, if there exists a homotopy sending $\gamma$ to $\gamma'$ while respecting $\lambda$, then the measure on $\gamma$ is the same as the measure on $\gamma'$. We say that the measured geodesic lamination has full support if the support of the transverse measure is exactly $\lambda$. For the rest of the paper, we always assume that the measured geodesic laminations have full support.

In $\mathbb{H}^2$, a (unparametrized) complete geodesic is characterized by the set of its 2 endpoints in $\partial_\infty\mathbb{H}^2$. Under the identification $\partial_\infty\mathbb{H}^2\simeq \mathbb{RP}^1$, a geodesic lamination is then a closed subset of
\[\left(\mathbb{RP}^1\times\mathbb{RP}^1\setminus\Delta\right)\slash \mathbb{Z}_2,\]
where $\Delta$ is the diagonal set in the product  $\mathbb{RP}^1\times\mathbb{RP}^1$ and $ \mathbb{Z}_2$ acts by switching the endpoints. As a consequence, one can also see measured laminations as measures on $\mathbb{RP}^1\times\mathbb{RP}^1\setminus \Delta$, with the choice of a section from $\left(\mathbb{RP}^1\times\mathbb{RP}^1\setminus\Delta\right)\slash \mathbb{Z}_2$ that does not need to be specified. This allows to define a topology on the space of measured laminations: the weak-$\star$ topology on measures on $\mathbb{RP}^1\times\mathbb{RP}^1\setminus \Delta$.


\begin{lemma} \label{lem:reciprocalbounds}
Let $\lambda_+, \lambda_-$ be any two bounded laminations. Then there exists a constant $C$ such that, for any 4-tuple of pairwise distinct points $a,b,c,d\in\mathbb{RP}^1$, one has
\[\min \left\lbrace \lambda_+\left(\crochet{a,b}\times\crochet{c,d}\right)+\lambda_-\left(\crochet{a,b}\times\crochet{c,d}\right),\lambda_+\left(\crochet{a,c}\times\crochet{b,d}\right)+\lambda_-\left(\crochet{a,c}\times\crochet{b,d}\right)\right\rbrace\leqslant C.\]
\end{lemma}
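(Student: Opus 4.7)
The plan is a geometric estimate based on the non-crossing property of measured laminations, combined with a bound on the transverse intersection of $\lambda_+$ with $\lambda_-$.

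The key observation is the following dichotomy. For four distinct points $a,b,c,d\in\RP^1$ in a fixed cyclic order, the two sets $S_1:=\crochet{a,b}\times\crochet{c,d}$ and $S_2:=\crochet{a,c}\times\crochet{b,d}$, viewed as subsets of $(\RP^1\times\RP^1\setminus\Delta)/\mathbb{Z}_2$, consist of unordered pairs whose corresponding geodesics in $\HH^2$ are necessarily linked: any geodesic with endpoints in $S_1$ crosses any geodesic with endpoints in $S_2$, and the crossing point lies inside the ideal quadrilateral $Q\subset\HH^2$ with ideal vertices $a,b,c,d$.

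First I would apply this to each lamination separately: if $\lambda\in\{\lambda_+,\lambda_-\}$ had support meeting both $S_1$ and $S_2$, two of its leaves would be linked at infinity and would therefore cross inside $Q$, contradicting the very definition of a geodesic lamination. Consequently $\lambda_\pm(S_1)\,\lambda_\pm(S_2)=0$. Setting $M_i=\lambda_+(S_i)+\lambda_-(S_i)$ for $i=1,2$, the product reduces to the cross-terms
\[ M_1 M_2 \;=\; \lambda_+(S_1)\lambda_-(S_2)\;+\;\lambda_-(S_1)\lambda_+(S_2). \]
By Fubini, each summand is the mass of pairs of leaves $(\ell_+,\ell_-)\in\mathrm{supp}(\lambda_+)\times\mathrm{supp}(\lambda_-)$ with linked endpoints, hence crossing inside $Q$; therefore $M_1 M_2 \leq i(\lambda_+,\lambda_-)(Q)$, where $i(\lambda_+,\lambda_-)$ is the transverse intersection measure of the two laminations on $\HH^2$.

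The last step is to bound $i(\lambda_+,\lambda_-)(Q)$ uniformly in $a,b,c,d$. The ideal quadrilateral $Q$ has hyperbolic area exactly $2\pi$ by Gauss-Bonnet, independent of the choice of four ideal vertices. Using the boundedness hypothesis, the intersection density of the two laminations is pointwise bounded by a constant $K$ depending only on the bounds $C_+,C_-$ of $\lambda_\pm$ on unit geodesic segments: through any unit transversal there pass at most $C_+$ leaves of $\lambda_+$, and each such leaf meets $\lambda_-$ at most $C_-$ times per unit arclength, so $K\leq C_+C_-$. Integrating the density over $Q$ yields $i(\lambda_+,\lambda_-)(Q)\leq 2\pi C_+C_-$, and the lemma follows with $C=\sqrt{2\pi C_+C_-}$ since $\min\{M_1,M_2\}\leq\sqrt{M_1M_2}$. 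The main point to verify carefully is the pointwise bound on the intersection density uniformly on $Q$, in particular in the thin ends near the ideal vertices, for which a covering argument by hyperbolic balls of fixed radius together with the bounded-lamination hypothesis is needed.
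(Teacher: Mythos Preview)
Your argument is essentially correct but takes a genuinely different route from the paper's, and yours is considerably more elaborate than it needs to be.

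\textbf{What the paper does.} The paper never looks at the product $M_1M_2$ or at the intersection measure $i(\lambda_+,\lambda_-)$. Instead it uses a direct transversal argument: every leaf with one endpoint in $[a,b]$ and the other in $[c,d]$ is contained in the strip bounded by $\gamma_{ac}$ and $\gamma_{bd}$, and since such a leaf runs from one ideal end of that strip to the other, it must cross the common perpendicular $k$ of $\gamma_{ac}$ and $\gamma_{bd}$. (The paper actually writes ``must intersect $h$'', but the correct transversal is $k$; the roles of $h$ and $k$ are swapped.) Symmetrically, every leaf from $[a,c]$ to $[b,d]$ must cross $h$. Hence $M_1\le (C_++C_-)\bigl(L(k)+1\bigr)$ and $M_2\le (C_++C_-)\bigl(L(h)+1\bigr)$. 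The classical identity $\sinh(L(h)/2)\sinh(L(k)/2)=1$ then forces $\min\{L(h),L(k)\}\le 2\operatorname{argsinh}(1)$, and the bound on $\min\{M_1,M_2\}$ follows immediately.

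\textbf{What you do, and where it is looser.} Your key inequality $M_1M_2\le i(\lambda_+,\lambda_-)(Q)$ is correct (after the lamination observation $\lambda_\pm(S_1)\lambda_\pm(S_2)=0$, which holds for open arcs), and so is your reduction to bounding $i(\lambda_+,\lambda_-)(Q)$ uniformly. But your justification of that bound is imprecise: the intersection measure $i(\lambda_+,\lambda_-)$ generally has atoms (two weighted geodesics crossing once give a Dirac mass), so speaking of a ``pointwise density bounded by $C_+C_-$'' is not meaningful, and one cannot simply integrate a density against the area $2\pi$. A correct argument proceeds as you hint: split $Q$ along a diagonal into two ideal triangles; all ideal triangles are isometric, so it suffices to bound $i(\lambda_+,\lambda_-)(T)$ for one fixed ideal triangle $T$; then cover the compact core of $T$ by finitely many unit balls (each contributing at most $O(C_+C_-)$) and handle each of the three cusps by the observation that the $\lambda_\pm$-mass of leaves entering a cusp is bounded by $C_\pm$ times the length of the entry horocyclic arc. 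This works, but it is a page of extra hyperbolic geometry compared with the paper's two-line use of the common-perpendicular identity. In short: your proof is valid with the covering argument filled in; the paper's is shorter and sharper.
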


\begin{proof}
Denote by $\gamma_{ab}$, $\gamma_{cd}$, $\gamma_{ac}$ and $\gamma_{bd}$ the four geodesics in $\mathbb{H}^2$ connecting the points $ab$, $cd$, $ac$ and $bd$. Since the point $a,b,c,d$ are distinct, there exist two uniquely determined geodesic segments $h$ and $k$, perpendicular to $\gamma_{ab}$ and $\gamma_{cd}$ and $\gamma_{ac}$ and $\gamma_{bd}$ respectively. Any leaf of $\lambda_\pm$ connecting $[a,b]$ to $[c,d]$ must intersect $h$, and similarly for $k$. Since $\lambda_+$ and $\lambda_-$ are bounded, the result will follow from the fact that either the length of $h$ or the length of $k$ is bounded. But it is a classical fact in hyperbolic geometry \cite[theorem 2.3.1 (i)]{buser:spectra} that
\[\sinh\left(\frac{L(h)}{2}\right)\sinh\left(\frac{L(k)}{2}\right)=1\]
which means that $h$ and $k$ cannot both be of length greater than $2\mbox{argsinh}(1)$. 
\end{proof}


\subsection{On the geometry of the  $AdS^3$-space.}
\label{ssc:AdS}

We recall in this section some key properties of the 3-dimensional $AdS$ space. The original source for most of the points described here is \cite[Section 7]{mess}, see also \cite{mess-notes}. More details can be found in \cite{bonsante-seppi:anti} or in the background sections of \cite{barbot-merigot,earthquakes,maximal,idealpolyhedra}.


\subsubsection*{The Lie group model.}

We consider the group PSL$_2(\mathbb{R})$ with its Killing form $\kappa$. We recall that $\kappa$ is defined on the Lie algebra $\mathfrak{sl}_2(\mathbb{R})$ of the group PSL$_2(\mathbb{R})$ by, for $u,v\in\mathfrak{sl}_2(\mathbb{R})$,
\[\kappa(u,v)=\mathrm{tr}(ad(u)\circ ad(v)).\]
The Killing form is $Ad$-invariant and so defines a pseudo-Riemannian metric on the whole group PSL$_2(\mathbb{R})$ (still denoted $\kappa$). Its signature is $(2,1)$.

We define the Anti de Sitter space of dimension 3 ($AdS^3$) as the group PSL$_2(\mathbb{R})$ together with the Lorentzian metric $g_{AdS^3}=\frac{1}{8}\kappa.$ The normalizing constant $\frac{1}{8}$ is made so that the Lorentzian curvature of $g_{AdS^3}$ is precisely $-1$.

Since the metric $g_{AdS^3}$ is undefinite, the tangent vectors have a type that we call:
\begin{itemize}
\item space-like if its $g_{AdS^3}$ squared norm is positive,
\item time-like if its $g_{AdS^3}$ squared norm is negative and
\item light-like if its $g_{AdS^3}$ squared norm vanishes.
\end{itemize}

In any tangent space, the set of light-like vectors forms a cone, and the set of time-like vectors has two connected components: the future-pointing time-like vectors and the past-pointing time-like vectors. Even though the space $AdS^3$ is not simply connected, the choice of future-pointing and past-pointing time-like vectors can be done consistently. We refer to this choice as the time-orientation of $AdS^3$. It is also oriented by the choice of a tangent basis of the form $(u,v, \crochet{u,v})$ for any two space-like vectors $u,v$.

The time-preserving, orientation-preserving isometry group of $AdS^3$ satisfies
\[\mathrm{Isom}_0(AdS^3)= SO_0(2,2)=SO_0(2,1)\times SO_0(2,1)=
  \mathrm{PSL}_2(\mathbb{R})\times \mathrm{PSL}_2(\mathbb{R}). \]
One way to recover this identification is by remarking that PSL$_2(\R)$ acts on itself by left and right multiplications. When PSL$_2(\R)$ equipped with its Killing metric is identified with $AdS^3$, the action of PSL$_2(\R)\times$ PSL$_2(\R)$ by left and right multiplication is isometric, because the Killing form of PSL$_2(\R)$ is left and right invariant. The corresponding morphism from PSL$_2(\R)\times $PSL$_2(\R)$ to $\mathrm{Isom}_0(AdS^3)$ turns out to be an isomorphism.


\subsubsection*{The projective model.}

We now construct another avatar of $AdS^3$, called the projective model.

On the space $\mathcal{M}_2(\mathbb{R})$, we denote by $q$ the quadratic form $-\det $. The associated symmetric bilinear form $\left\langle\cdot,\cdot\right\rangle$ has signature $(2,2)$. It is easy to see that the restriction of $\left\langle\cdot,\cdot\right\rangle$ to SL$_2(\mathbb{R})$ corresponds to (the double cover of) the metric $g_{AdS^3}$, so that we can identify $AdS^3$ with
\[\mathrm{PSL}_2(\mathbb{R})=\set{A\in\mathcal{M}_2(\mathbb{R})\;\;q(A)=-1}\slash\set{\pm 1},\]
endowed with the metric descending from $\left\langle\cdot,\cdot\right\rangle$.
We choose the system of coordinates in $\mathcal{M}_2(\mathbb{R})$ so that it is diffeomorphic to $\mathbb{R}^4$ through the diffeomorphism
\[\fonction{\varphi}{\mathbb{R}^4}{\mathcal{M}_2(\mathbb{R})}{(x_1,x_2,x_3,x_4)}{\begin{pmatrix}
x_1-x_3 & -x_2+x_4\\
x_2 + x_4 & x_1+x_3
\end{pmatrix}}.\]
It is easy to check that, in this coordinate system, we have
\[q(A)=-x_1^2-x_2^2+x_3^2+x_4^2,\]
so that $AdS^3$ identifies with the subset
$$ \Omega=\{ [x_1,x_2,x_3,x_4]\in \RP^3~|~ -x_1^2-x_2^2+x_3^2+x_4^2<0\} $$
in $\mathbb{RP}^3$. The metric $g_{AdS^3}$ is compatible with projective geometry, in the sense that geodesics of $AdS^3$ are projective lines and the isometry group $\mathrm{Isom}AdS^3$ identifies with the subgroup of PGL$_4(\mathbb{R})$ with preserves $q$, i.e the group PO$(2,2)$. See \cite{fillastreseppi}.

\subsubsection*{Boundary of $AdS^3$}

As an open set of $\mathbb{RP}^3$, $AdS^3$ has a natural compactification and the boundary is the 2-dimensional Einstein space Ein$^{1,1}$. Referring to the projective model above, Ein$^{1,1}$ is defined as the projectivization of isotropic lines for $q$. The restriction of $\left\langle\cdot,\cdot\right\rangle$ has signature $(1,1)$ and the action of $\mathrm{Isom}AdS^3$ extends to Ein$^{1,1}$ and acts by $q_{|\mathrm{Ein}^{1,1}}$-conformal transformations. We usually write Ein$^{1,1}=\partial AdS^3$.

The Lie group model of $AdS^3$ yields the identification Ein$^{1,1}\simeq \partial_\infty\mathbb{H}^2\times\partial_\infty\mathbb{H}^2$ whereas the projective model yields Ein$^{1,1}\simeq \mathbb{RP}^1\times\mathbb{RP}^1$. Either way Ein$^{1,1}$ is then a topological torus and $AdS^3$ is a solid torus. The projective model of $AdS^3$ allows to endow Ein$^{1,1}$ with a double ruling by projective lines of the form $\mathbb{RP}^1\times\set{\star}$ (left ruling) or of the form $\set{\star}\times\mathbb{RP}^1$ (right ruling). Those lines are the null-lines for $q$. Note that there is also a time-orientation of Ein$^{1,1}$ coming from the time-orientation of $AdS^3$.

Since $\mathrm{Isom}(AdS^3)$ acts projectively on the projective model $\Omega$, it acts projectively on $\partial \Omega$. So the action of $\mathrm{Isom}_0(AdS^3)$ sends lines to lines, and the identity component $\mathrm{Isom}_0(AdS^3)$ acts separately on each family of lines. Each of those families of lines is equipped with a real projective structure -- coming from the intersection with any line of the other family -- and the action of $\mathrm{Isom}_0(AdS^3)$ on each family of lines is projective. This defines a morphism from $\mathrm{Isom}_0(AdS^3)$ to PSL$_2(\R)\times $PSL$_2(\R)$ which can be shown to be an isomorphism. 



\subsubsection*{The left and right projections}

Let $\Pi_0$ be a fixed totally geodesic space-like plane in $AdS^3$. Its boundary $\partial\Pi_0\subset \partial AdS^3$ is a circle which intersects exactly once every leaf of the left and of the right foliation of $\partial AdS^3$. The product decomposition $\partial AdS^3=\RP^1\times \RP^1$ can therefore be used to project $\partial AdS^3$ to $\partial \Pi_0$ along the leaves of the left or of the right projection, leading to projection maps $\pi_L,\pi_R:\partial AdS^3\to \partial \Pi_0$. Replacing $\Pi_0$ by another totally geodesic space-like leads to composing $\pi_L, \pi_R$ on the left with a Möbius transformation.

Given another totally geodesic space-like plane $\Pi\subset AdS^3$, the restrictions of $\pi_L,\pi_R$ to $\partial \Pi$ are Möbius transformations, which have a unique extension, that we call $\pi_{L,\Pi}, \pi_{R,\Pi}$, to an isometry from $\Pi$ to $\Pi_0$.

If now $\Sigma$ is a space-like surface in $AdS^3$, we can define left and right projections, still denoted by $\pi_L, \pi_R$, from $\Sigma$ to $\Pi_0$, in the following manner. For each point $x\in \Sigma$, let $\Pi_x$ be the totally geodesic space-like plane tangent to $\Sigma$ at $x$, and we define $\pi_L(x)=\pi_{L,\Pi_x}(x),\pi_R(x)=\pi_{R,\Pi_x}(x)$. It was already noted by Mess \cite{mess} that if $\Sigma$ is a pleated surface, then this construction defines a left (resp. right) earthquake along the measured bending lamination from $\Sigma$ equipped with its induced (hyperbolic) metric to $\Pi_0$. 

\subsubsection*{Acausal meridians in $\partial AdS^3$}

Let again $\Sigma\subset AdS^3$ be a space-like surface, that we now consider to be extrinsically complete. Its boundary is then a curve $\partial \Sigma\subset \partial AdS^3$ which is weakly space-like in the conformal Lorentzian structure of $\partial AdS^3$, in the sense that no two points can be connected by a short time-like segment. 

In the identification of $\partial AdS^3$ with $\RP^1\times \RP^1$, acausal meridians are graphs of functions from $\RP^1$ to $\RP^1$. Those functions might however not be continuous, when the acausal meridian contains a light-like segment. Among the acausal meridians, it is natural to consider those which are graphs of more regular functions. As mentioned below, we call {\em quasicircle} an acausal meridian which is the graph of a quasi-symmetric homeomorphism.

\subsubsection*{Convex hulls of acausal meridians}

It was already proved in \cite{mess} that an acausal curve $C\subset \partial AdS^3$ is always disjoint from a certain totally geodesic space-like plane. As a consequence, it is fully contained in an affine chart of $\RP^3$, and therefore in the boundary of a projective model of $AdS^3$ in $\RR^3$.

One can therefore define its convex hull in $AdS^3$, in an affine manner, and the resulting subset does not depend on the affine chart that is chosen (assuming of course that it contains $C$). The complement of $C$ in the boundary of this convex hull is then the union of two surfaces which are everywhere space-like or light-like, one future-oriented, the other past-oriented.

If $C$ is a quasicircle, then each of those connected component of the complement of $C$ in the boundary of $CH(C)$ is space-like. It is moreover isometric, with its induced metric, to the hyperbolic plane, and is pleated along a bounded measured foliation, see \cite{mess}.

\subsection{The Rhombus}\label{subsec:rhombus}

We now describe a very special acausal curve in $\partial AdS^3$ which plays a key role in a number of compactness questions in $AdS$ geometry.

\begin{defi}[The Rhombus]
Consider four points $a<a'$ and $b<b'$ in $\mathbb{RP}^1$ \footnote{The two points $a$ and $a'$ live on the same lightlike line in $\partial AdS^3$, namely $\mathbb{RP}^1\times\left\lbrace\star\right\rbrace$; this line is oriented by the time. Same for $b$ and $b'$.}. The Rhombus is a curve inside $\partial AdS^3$ which connects the points $(a,b),(a',b),(a',b')$ and $(a,b')$ in this order by lightlike lines (i.e of the form $\mathbb{RP}^1\times\left\lbrace\star\right\rbrace$ or $\left\lbrace\star\right\rbrace\times\mathbb{RP}^1$).

We also call Rhombus the convex hull inside $AdS^3$ of such a curve. It is a tetrahedron, with two past oriented and two future oriented triangular faces. It has six edges, four of which being contained in $\partial AdS^3$ and light-like and the two remaining ones space-like lines in $AdS^3$ that we call the \textit{axis}. 
\end{defi}

\subsection{The width of acausal meridians}

Another, related notion, also important for compactness issues in $AdS$ geometry, is the {\em width} of an acausal curve.

\begin{defi}
The width of an acausal quasi-circle $C$ is the supremum of the time distance between $\partial_- CH(C)$ and $\partial_+ CH(C)$.
\end{defi}

As an example, it can be checked that the width of a rhombus is $\pi/2$, and any point in the past axis is connected to any point in the future axis by a time-like geodesic segment of length exactly $\pi/2$.

The following proposition appeared in \cite{maximal}, and can also be found as \cite[Proposition 6.8]{convexhull}. 

\begin{lemma} 
Let $C$ be an acausal curve in $\partial AdS^3$. Its width is at most $\pi/2$. If $C$ is a quasicircle, then the width of $C$ is less than $\frac{\pi}{2}$. Conversely if the width of $C$ is exactly $\frac{\pi}{2}$, then $C$ is not a quasi-circle and one of the two cases occur.
\begin{enumerate}
\item The distance $\frac{\pi}{2}$ is achieved, in which case, $C$ is a rhombus.
\item The distance $\frac{\pi}{2}$ is not achieved and there exists a sequence of isometries $\varphi_n\in\mathrm{Isom}_0(AdS^3)$ such that $(\varphi_n (C))_{n\in \N}$ converges to a rhombus in the Hausdorff topology on compact subsets of $AdS^3$. 
\end{enumerate}
\end{lemma}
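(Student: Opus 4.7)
The plan is to prove the three assertions in sequence: the upper bound $\mathrm{width}\leq \pi/2$, the dichotomy in the extremal case, and (as a corollary) the strict inequality for quasicircles.

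For the upper bound I would exploit projective duality in $AdS^3$. To each point $p\in AdS^3$ corresponds its polar plane $p^{\star}$ with respect to the quadric defining $AdS^3$, a space-like totally geodesic plane which coincides with the set of points at time-like distance exactly $\pi/2$ from $p$. Given $p\in \partial_{-}CH(C)$ and $q\in \partial_{+}CH(C)$, the realizing time-like geodesic from $p$ to $q$ lies in $CH(C)$ by convexity; if its length exceeded $\pi/2$, this geodesic would cross $p^{\star}$ and continue past it. The boundary circle $\partial p^{\star}\subset \partial AdS^3$ meets each null ruling of $\partial AdS^3$ exactly twice, and a local analysis of how the support planes of $CH(C)$ near $p$ and $q$ intersect $\partial AdS^3$ shows that $CH(C)$ having points strictly on both sides of $p^{\star}$ is incompatible with $C$ being acausal.

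For the extremal case, suppose $d(p,q)=\pi/2$ is achieved. Then $q\in p^{\star}$ and $p\in q^{\star}$. The maximality of the time-like distance should force the supporting space-like plane of $\partial_{-}CH(C)$ at $p$ to coincide with $q^{\star}$, and symmetrically the supporting plane at $q$ to coincide with $p^{\star}$. The two circles $\partial p^{\star}, \partial q^{\star}\subset \partial AdS^3$ then determine four null segments on $\partial AdS^3$ through their intersections with the two rulings, and these are precisely the edges of a rhombus $R$. The acausality of $C$, together with $C$ lying on these boundary components, forces $C=R$. If instead $\pi/2$ is not achieved, take $(p_n,q_n)$ with $d(p_n,q_n)\to \pi/2$. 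Using the transitivity of $\mathrm{Isom}_0(AdS^3)=\PSL_2(\RR)\times \PSL_2(\RR)$ on pairs of time-like separated points with a fixed distance, find isometries $\varphi_n$ sending $(p_n,q_n)$ to $(p_0,q'_n)$ with $p_0$ fixed. The curves $\varphi_n(C)$ remain acausal meridians confined to a bounded region of a projective chart containing $p_0$ and $q'_n$, so by Hausdorff compactness one extracts a limit $C_\infty$ in $\overline{AdS^3}$. The limit is an acausal meridian in which the distance $\pi/2$ is now achieved, so by the first case $C_\infty$ is a rhombus.

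The strict bound for quasicircles follows from this dichotomy. A rhombus is not a quasicircle, since it contains full null segments which cannot occur in the graph of any function. Moreover, if $\varphi_n(C)\to R$ Hausdorff with $C$ a quasicircle, then each $\varphi_n(C)$ is also a quasicircle of the same quasi-symmetric constant $k$ (because the $\PSL_2(\RR)\times \PSL_2(\RR)$-action preserves the quasi-symmetric class). Choosing four points on $\varphi_n(C)$ converging to four distinct points lying on a single null edge of $R$, and examining the behaviour of the cross-ratios of the corresponding preimages in $\RP^1$, one sees that the Fletcher--Markovic bound forces $k\to \infty$, a contradiction. The main obstacle lies in the rigidity step for the extremal case: one must show that the time-like distance $\pi/2$ between $p$ and $q$ already rigidly determines the supporting planes of $CH(C)$ at $p$ and $q$ to be precisely the dual planes $q^{\star}$ and $p^{\star}$, and rule out more complicated pleated configurations; this requires combining the polar duality of $AdS^3$ with a careful analysis of the space-like pleated boundaries of the convex hull.
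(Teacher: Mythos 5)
The paper does not contain a proof of this lemma: the sentence immediately preceding it attributes the result to \cite{maximal} and to \cite[Proposition~6.8]{convexhull}, so there is no in-paper argument to compare your sketch against.

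Evaluating your sketch on its own merits: the general outline (polar duality for the bound $\pi/2$; rigidity of the achieved case; renormalization to a Hausdorff limit in the unachieved case; preservation of the quasi-symmetric constant under $\PSL_2(\RR)\times\PSL_2(\RR)$ together with a cross-ratio degeneration for the strict inequality) is the right shape, and it is essentially the route taken in the cited references. But there are gaps. The rigidity step --- that $d(p,q)=\pi/2$ forces the support planes at $p$ and $q$ to be $q^\star$ and $p^\star$, and then that this forces $C$ to be the rhombus --- is the crux, and you explicitly leave it open. In the first step, the claim that $\partial p^\star$ meets each null ruling of $\partial AdS^3$ exactly twice is incorrect: $\partial p^\star$ is the ideal boundary of a space-like totally geodesic plane, hence the graph of a M\"obius transformation in $\RP^1\times\RP^1$, and so meets each null line exactly once; moreover the intended implication (that $CH(C)$ cannot cross $p^\star$) is obtained in the references through support planes and their dual points, not directly from acausality of $C$, and your sketch leaves precisely that step implicit. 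Finally, in the unachieved case one must also argue that the Hausdorff limit of $\varphi_n(C)$ is a genuine acausal meridian rather than a degenerate curve, which requires normalizing more than just the pair $(p_n,q_n)$. As written, the proposal outlines the proof in the correct direction but does not supply the decisive arguments.
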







\section{Ideal polyhedra and approximation of laminations.} 

In this section, we show how to approximate a pair of measured laminations that strongly fill in $\HH^2$ by a sequence of pairs of ``polyhedral'' laminations, that is, measured laminations on ideal polygons with support contained in a union of disjoint diagonals. Those polyhedral laminations will be constructed so that they satisfy a result on the dihedral of ideal polyhedra in $AdS^3$, so as to provide a sequence of polyhedra with dihedral angles converging, in a proper sense, to the pair of laminations. We will then show that this sequence of ideal polyhedra converges to the convex hull of an acausal curve, as needed for the proof of Theorem \ref{tm:bending}

\subsection{Ideal polyhedra with prescribed dihedral angles}

We first define what we mean by a ``polyhedral'' measured lamination on $\HH^2$.

\begin{defi}
  A measured lamination on $\HH^2$ is {\em polyhedral} if its support is the disjoint union of a finite set of complete geodesics.
\end{defi}

This notion is related to that of {\em ideal polyhedron} in $AdS^3$. It is quite simple to define ideal polyhedra in the projective model of $AdS^3$, see Section \ref{ssc:AdS}: it is a polyhedron with all vertices on the boundary $\partial AdS^3$, but such that the complement of the vertices is contained in $AdS^3$. It follows from this definition that all edges and all faces must be space-like, see \cite{idealpolyhedra}. (There is also a similar notion of hyperideal polyhedron, see \cite{hyperideal}).

\begin{defi}
  Let $P$ be an ideal polyhedron in $AdS^3$, and let $P_-$ and $P_+$ be its past and future boundary components.
  Each of those two pleated ideal polygons defines a polyhedral measured lamination. We denote these two laminations by $\lambda_-(P)$ and $\lambda_+(P)$ respectively.
  We denote by $V(P)$ the $n$-tuple of vertices of $P$, cyclically oriented in the order in which they occur on any acausal meridian containing them. 
\end{defi}

By construction, $\lambda_\pm(P)$ have a support which is a disjoint union of diagonals, that is, complete geodesics connecting two vertices. 

\begin{theorem}\label{thm:approx}
Let $\lambda_-$, $\lambda_+\in \cML$ be a pair of laminations that strongly fill.
There exists a sequence $(P_n)_{n\in \N}$ of ideal polyhedra and a sequence $u_n$ of functions from $V(P_n)$ to $\RP^1$ such that $((u_{n})_*(\lambda_-(P_n)))_{n\in \N}$ and $((u_{n})_*(\lambda_+(P_n)))_{n\in \N}$ converge to $\lambda_-$ and $\lambda_+$, respectively.
\end{theorem}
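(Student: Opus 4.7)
The plan is to produce each $P_n$ in two stages: first approximate $(\lambda_-,\lambda_+)$ by a pair of polyhedral measured laminations sharing a common finite cyclic set of ideal endpoints, and then realize that polyhedral pair as the past and future pleating data of an ideal polyhedron in $AdS^3$ via a dihedral angle prescription theorem. The map $u_n$ is simply the identification, built into the prescription theorem, between $V(P_n)\subset\partial AdS^3$ and the finite vertex set on $\RP^1$.

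For the approximation step, I would exhaust $\HH^2$ by hyperbolic disks $D_R$ of radius $R$ centered at a fixed basepoint. Since $\lambda_\pm\in\cML_b$ are bounded, their restrictions to $D_R$ have finite total transverse mass, and can be approximated in the weak-$\star$ topology on measures on $\RP^1\times\RP^1\setminus\Delta$ by finite sums of weighted Dirac masses supported on individual complete geodesics. I would then snap or perturb the endpoints of these geodesics so that both approximations $\lambda_-^n$ and $\lambda_+^n$ have their supports in a common finite set of diagonals of a cyclic polygon $V_n\subset\RP^1$, without destroying weak convergence. Letting $R=R(n)\to\infty$ and the mesh tend to zero yields polyhedral pairs $(\lambda_-^n,\lambda_+^n)$ converging to $(\lambda_-,\lambda_+)$.

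For the realization step, I would appeal to the main result of \cite{idealpolyhedra}, which characterizes ideal polyhedra in $AdS^3$ by the pair of their past and future pleating laminations on a common ideal polygon. The hypothesis there is a combinatorial filling condition on the pair of polyhedral laminations, ensuring that the two weighted families of diagonals jointly decompose $V_n$ in an admissible way. I would check that the strong filling hypothesis on $(\lambda_-,\lambda_+)$ (Definition \ref{df:fill}), together with the reciprocal bound of Lemma \ref{lem:reciprocalbounds}, forces $(\lambda_-^n,\lambda_+^n)$ to satisfy this combinatorial filling condition for $n$ large. The theorem then produces the ideal polyhedron $P_n$ with $\lambda_\pm(P_n)=\lambda_\pm^n$, and $(u_n)_\ast \lambda_\pm(P_n)=\lambda_\pm^n\to\lambda_\pm$ by construction.

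The main obstacle is the compatibility between the discretization and the hypotheses of the prescription theorem. A naive discretization could produce polyhedral laminations whose supports overlap or fail to induce a proper cell decomposition of $V_n$, and even away from such degeneracies the combinatorial filling condition required in $AdS^3$ is more delicate than plain weak convergence. The strong filling hypothesis is precisely what is designed to survive discretization uniformly in $n$, through the quantitative intersection control of Lemma \ref{lem:reciprocalbounds}; translating this into the exact combinatorial input needed by \cite{idealpolyhedra}, and doing so compatibly with the weak convergence of the approximations, is where the delicate work of the proof lies.
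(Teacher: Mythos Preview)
Your high-level plan matches the paper's: approximate $(\lambda_-,\lambda_+)$ by polyhedral laminations on a common finite vertex set, then invoke the dihedral-angle prescription theorem of \cite{idealpolyhedra}. However, your description of that theorem and of what it takes to verify its hypotheses is off in ways that hide the real work.

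The result used is \cite[Theorem 1.4]{idealpolyhedra}, and it is not phrased as ``a pair of pleating laminations satisfying a combinatorial filling condition''. It takes as input a single weighted graph $\Gamma$ on the sphere with a Hamiltonian cycle (the equator), and requires four conditions: (1) the Hamiltonian cycle exists; (2) equatorial edges have negative weight, the others positive; (3) the signed weights at each vertex sum to zero; (4) a strict positivity condition along non-facial dual cycles. Condition (3) is the one you do not mention, and it drives the entire construction. It forces the introduction of equatorial edges with weight $-\Lambda_n/(2k)$ and, more importantly, forces the vertex set to be chosen so that \emph{exactly} $\Lambda_n/k$ of lamination weight lands in each boundary interval. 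The paper achieves this via a careful ``leaf-splitting'' procedure and a divisibility argument on $k$ (Lemma \ref{lem:laminations_graph}); a naive snapping of endpoints as you propose will not produce this balance.

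Two further points. First, the paper needs to know that no vertex is simultaneously an endpoint of a $\lambda_+^n$-leaf and a $\lambda_-^n$-leaf, so that the non-equatorial edges of $\Gamma$ split cleanly into future and past parts; this is Lemma \ref{lem:endpoints}, proved from the strong filling hypothesis, and is a genuine step you omit. Second, Lemma \ref{lem:reciprocalbounds} is \emph{not} used in this proof at all (it appears later, in the width estimate of Lemma \ref{lm:w}). Condition (4) is obtained in two stages: first a weak version (4') from strong filling (Lemma \ref{lem:4'}), then a further vertex-splitting to pass from (4') to (4) (Lemma \ref{lem:splitting}). Your final paragraph correctly senses that this translation is the delicate part, but the concrete obstacles are conditions (3) and (4), handled as above, rather than anything coming from Lemma \ref{lem:reciprocalbounds}.
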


The convergence considered here is in the sense of weak-$*$ convergence on measures in $\RP^1\times \RP^1\setminus \Delta$, see section \ref{ssc:laminations}.

In the proof of Theorem \ref{thm:approx}, the main tool to exhibit ideal polyhedra is \cite[Theorem 1.4]{idealpolyhedra} which we recall below.

Let $\Gamma$ be a weighted graph, embedded in the 2-dimensional sphere. For an edge $e$ of this graph, we denote by $\theta(e)$ its weight. To $\Gamma$, we associate the dual graph $\Gamma^*$. Given an edge $e$ of $\Gamma$, we denote by $e^*$ the dual edge.
We say that a polyhedron $P(\Gamma)$ in $AdS^3$ is the realization of $\Gamma$ if there is a bijection between the edges of $\Gamma$ and the edges of $P(\Gamma)$ taking the weight of $e\in E(\Gamma)$ to the exterior dihedral angle of the corresponding edge in $P(\Gamma)$. \cite[Theorem 1.4]{idealpolyhedra} states that $\Gamma$ can be realized if and only if the following criteria are fulfilled.
\begin{enumerate}
\item The graph $\Gamma$ has a Hamiltonian path.
\item For any edge $e$ of the Hamiltonian path, we have $\theta(e)<0$, and $\theta(e)>0$ otherwise.
\item If $e^*_1,\cdots,e^*_k$ bound a face of $\Gamma^*$, then
\[\sum_{i=1}^k \theta(e^*_i)=0.\]
\item If $e^*_1,\cdots,e^*_k$ is a simple continuous path in $\Gamma^*$, which does not bound a face of $\Gamma^*$, then
\[\sum_{i=1}^k \theta(e^*_i)>0.\]
\end{enumerate}
Moreover the polyhedron $P(\Gamma)$ realizing $\Gamma$ enjoys the following property: the vertices can all be connected by a polygonal curve composed of the edges separating two faces for which the exterior normals have opposite time orientation. Those edges are the ones coming from the Hamiltonian path.
We refer to this polygonal curve as the {\em equator} of $P(\Gamma)$. 

In light of this result, our goal becomes to construct a weighted graph satisfying properties (1) to (4) above.

\subsection{Approximation of a pair of laminations}

Let us start by an elementary construction.

We fix $n\in\mathbb{N}$, and we consider the disk $D(o,n)$ of radius $n$ centered at a fixed point $o\in\mathbb{H}^2$ . We remove from $\lambda_-$ and $\lambda_+$ the leaves which does not intersect $D(o,n)$. We denote those truncated laminations by $\lambda_-^n$ and $\lambda_+^n$; our aim is to approximate them by polyhedral laminations. Making an arbitrarily small perturbation of $\lambda_-$ and $\lambda_+$, we assume that those truncated laminations are \textit{rational}: they consist in disjoint union of isolated leaves and the measure is a finite sum of Dirac masses. Moreover, we can assume (and will use below) that the weight of each leaf is also a rational number. The total weight of those truncated lamination is finite, it equals the sum of the intersections of the circle $C(o,n)$ with the laminations $\lambda_+$ and $\lambda_-$. We denote this total weight by $\Lambda_n$.

We now arrange $k$ points on the circle $\mathbb{S}^1=\partial_\infty\mathbb{H}^2$ according to the following procedure. The number $k$ will be chosen later as a function of $n$. We use the Poincaré disk model of $\mathbb{H}^2$. We first set $a_1=i$.

Assuming that $a_j$ has been constructed, we construct $a_{j+1}$ in such a way that the leaves of $\lambda_-^n$ and $\lambda_+^n$ limiting in the interval $\left(a_ja_{j+1}\right)$ have a total weight at most $\frac{\Lambda_n}{k}$ and that the leaves of $\lambda_-^n$ and $\lambda_+^n$ limiting in the interval $\left(a_ja_{j+1}\right]$ have a total weight at least $\frac{\Lambda_n}{k}$.
The vertex $a_{j+1}$ is then split in two vertices, one still named $a_{j+1}$ and another one say $a'_{j+1}$ which is very close to $a_{j+1}$ in the counterclockwise direction, and any weight of leaves ending at $a_{j+1}$ in excess of $\frac{\Lambda_n}{k}$ is moved to $a'_{j+1}$. We refer to this process as leaf-splitting. 

The endproduct of this construction is $k$-tuple of points $a_1, \cdots, a_k$, arising in this cyclic order, such that the total weight of the leaves ending in $(a_j,a_{j+1}]$ is always exactly $\Lambda_n/k$.

We now consider the following graph $\Gamma_0$:
\begin{itemize}
\item $\Gamma_0$ has a vertex $v_j$ for each of the $k$ points $a_j$.
\item The edges between $v_j$ and $v_{j'}$ correspond to the leaves of $\lambda_-^n$ and $\lambda_+^n$ from the interval $\left(a_ja_{j+1}\right]$ to $\left(a_{j'}a_{j'+1}\right]$, if there is any such leaf.
\item The adjacent vertices $v_j$ and $v_{j+1}$ are related by an edge of weight $-\frac{\Lambda_n}{2k}$.
\end{itemize}
Roughly speaking the graph $\Gamma_0$ is the union of $\lambda^n_-$ and $\lambda^n_+$ to which we pulled back the endpoints of the leaves in the interval $\left[a_{j}a_{j+1}\right)$ to the single vertex $v_j$.
Remark that, since the laminations $\lambda^n_-$ and $\lambda^n_+$ are assumed to be rational, we can arrange that each of the points $a_j$ and hence $v_j$ is the endpoint of some leaf in $\lambda^n_-$ or $\lambda^n_+$.

\begin{lemma}\label{lem:laminations_graph}
There is an infinite number of integers $k$ so that the graph $\Gamma_0$ is exactly the union of $\lambda^n_-$ and $\lambda^n_+$, maybe with leaves splitted, to which we have added the equatorial edges.
\end{lemma}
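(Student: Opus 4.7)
The plan is to choose $k$ large enough that, in the greedy construction, each interval $(a_j, a_{j+1}]$ contains at most one endpoint of $\lambda^n_-\cup\lambda^n_+$ (before splitting) and so that the leaf-splitting process exhausts each endpoint without fractional leftover.

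First I would fix notation. Let $w_1,\ldots,w_N$ be the (finitely many, positive rational) weights of the leaves of $\lambda^n_-\cup\lambda^n_+$, and let $w_{\min}=\min_i w_i$. Recall that $\Lambda_n=2\sum_i w_i$, since each leaf of the truncated laminations intersects $C(o,n)$ exactly twice. After a small perturbation that preserves the rational nature of the laminations, I may assume that the $2N$ endpoints of the leaves on $\partial_\infty\HH^2$ are pairwise distinct. I then impose two conditions on $k$: a \emph{divisibility} condition $kw_i/\Lambda_n\in\N$ for each $i$ (equivalently, $\Lambda_n/k$ divides each $w_i$), and a \emph{smallness} condition $\Lambda_n/k<w_{\min}$. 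Writing $w_1,\ldots,w_N,\Lambda_n$ over a common integer denominator, the divisibility condition cuts out the positive multiples of a fixed integer $L$; among these, all but finitely many satisfy the smallness condition. Hence infinitely many $k$ satisfy both.

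Next I would analyze the greedy construction for such a $k$. By smallness, the total endpoint-weight carried by any half-open arc of $\partial_\infty\HH^2$ containing no leaf endpoint is zero, while as soon as it contains one endpoint its weight jumps by at least $w_{\min}>\Lambda_n/k$. Thus, starting from $a_j$, the procedure advances $a_{j+1}$ exactly to the next leaf endpoint $x$ clockwise of $a_j$, where the weight jumps from $0$ to $w_{i(x)}$ (the weight of the unique leaf ending at $x$). The splitting step then keeps weight $\Lambda_n/k$ at $v_j$ and pushes $w_{i(x)}-\Lambda_n/k$ to $a'_{j+1}$, located at the same geometric point $x$. By divisibility, iterating this at $x$ produces a cluster of exactly $w_{i(x)}/(\Lambda_n/k)$ vertices, all at position $x$, each of weight $\Lambda_n/k$, with nothing left over to interfere with the next endpoint.

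Finally I would translate this combinatorial picture into the statement about $\Gamma_0$. Each original leaf $\ell_i$ is replaced by $kw_i/\Lambda_n$ parallel copies, each of weight $\Lambda_n/k$; each such copy contributes exactly one non-equatorial edge of $\Gamma_0$, joining a vertex in the cluster at one endpoint of $\ell_i$ to a vertex in the cluster at the other endpoint. Distinct split copies use distinct pairs of endpoint-clusters, so no two leaves collapse into the same edge. Together with the $k$ equatorial edges between consecutive $v_j,v_{j+1}$ of weight $-\Lambda_n/(2k)$, this accounts for every edge of $\Gamma_0$, which is the desired conclusion. The main obstacle I expect is the bookkeeping around leaf-splitting: verifying that the divisibility condition really prevents any ``remainder'' at one endpoint from spilling into the next endpoint's interval, which would otherwise break the one-leaf-per-edge correspondence on which the lemma rests.
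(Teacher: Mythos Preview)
Your proposal is correct and follows essentially the same route as the paper: both arguments hinge on choosing $k$ so that $\Lambda_n/k$ divides every leaf weight $w_i$, which guarantees the leaf-splitting procedure terminates cleanly at each endpoint with no leftover spilling into the next interval. Your treatment is in fact more explicit than the paper's (you spell out the smallness condition $\Lambda_n/k<w_{\min}$ and the genericity assumption that the $2N$ endpoints are distinct, and you track the cluster-to-cluster bijection), whereas the paper simply sets $k=\alpha\,q_1\cdots q_m/\Lambda_n$ and observes that each weight becomes an integer multiple of $\Lambda_n/k$; note that your smallness condition is nearly automatic once divisibility holds (it follows for any $\alpha\geq 2$).
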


\begin{proof}
Recall that we have assumed that the lamination $\lambda^n_-$ and $\lambda_n^+$ are rational. We then denote the weights of the leaves by $\frac{p_1}{q_1},\cdots,\frac{p_m}{q_m}$. We now choose $k$ to be an integer multiple of $\frac{q_1 q_2\cdots q_m }{\Lambda_n}$, say $k=\alpha\frac{q_1\cdots q_m}{\Lambda_n}$. It follows that the weight of any leaf is a integer multiple of $\frac{\Lambda_n}{k}$ (indeed $\frac{p_i}{q_i}=\alpha p_i\cdot q_1\cdots \hat{q_i}\cdots q_m\frac{\lambda_n}{k}$). The leaf-splitting procedure gives rise to $2\alpha p_i\cdot q_1\cdots \hat{q_i}\cdots q_m$ vertices in $\Gamma_0$, close to the endpoints of the leaf of weight $\frac{p_i}{q_i}$ and edges, all of which having the weight $\frac{\Lambda_n}{k}$ between the vertices, and close to the original leaf.
\end{proof}

The graph $\Gamma_0$ does not yet encode the 1-skeleton and dihedral angle of the polyhedron we are looking for. We need to construct a slight modification of $\Gamma_0$ to which we can apply \cite[Theorem 1.4]{idealpolyhedra}. Nevertheless, $\Gamma_0$ is the starting point of our construction: it obviously satisfies properties (1), (2) and (3). Throughought the modifications of $\Gamma_0$, it will be easy to see that those three properties persist.

The ``slight modification'' will be understood in the following sense.

\begin{defi}
  We consider the set of weighted graphs $\Gamma=(V;E;\theta)$ in $\mathbb{H}^2$ such that:
  \begin{itemize}
  \item the vertices are on $\partial_\infty\mathbb{H}^2$,
  \item the set of edges is bipartite $E=E_+\cup E_-$,
  \item  both $(E_+,\theta_{|E_+})$ and $(E_-,\theta_{|E_-})$ are measured geodesic laminations.
  \end{itemize}
  We endow the set of such graphs with the topology coming from the topology on the space of measured laminations.
\end{defi}

We denote by $\lambda^{k,n}_-$ and $\lambda^{k,n}_+$ the laminations associated to $\Gamma_0$.

We can now modify $\Gamma_0$ to satisfy the requirements of \cite[Theorem 1.4]{idealpolyhedra} through the following three steps.

\vspace{0.5cm}

\paragraph{{\bf 1st step: coloring the vertices.}}

The first part of the modification for the weighted graph $\Gamma$ is to notice that its vertices
can be separated in two groups, depending on whether they are endpoints of an edge in $E_+$ of in $E_-$.

\begin{lemma}\label{lem:endpoints}
For any $n\in\mathbb{N}$, there is a number $\delta(n)>0$ with the following property. Let $a_+\in\mathbb{S}^1$ be the endpoint of a geodesic $F_+\in\lambda^n_+$ and $a_-\in\mathbb{S}^1$ be the endpoint of a geodesic $F_-\in\lambda^n_-$. Then
\[d(a_-,a_+)>\delta(n)\]
(where $d$ denotes the visual distance on $\partial_\infty \HH^3$ associated to the center $o$).
\end{lemma}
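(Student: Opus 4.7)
The plan is to exploit the finite structure of the rationalized truncated laminations and combine a simple counting argument with a small genericity perturbation.  By the rationality assumption invoked just above the lemma, each of $\lambda^n_+$ and $\lambda^n_-$ is a finite disjoint union of isolated leaves.  Consequently the sets $E^n_\pm \subset \mathbb{S}^1 = \partial_\infty \mathbb{H}^2$ of endpoints of the leaves of $\lambda^n_\pm$ are both finite.  The entire content of the lemma then reduces to the assertion $E^n_+ \cap E^n_- = \emptyset$, since given disjoint finite subsets of the compact circle $\mathbb{S}^1$ we may simply set
\[ \delta(n) := \min\{d(a_+,a_-) : a_+ \in E^n_+,\, a_- \in E^n_-\} > 0. \]

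The disjointness $E^n_+ \cap E^n_- = \emptyset$ I would fold into the arbitrarily small perturbation already invoked to achieve rationality.  If a point $a \in \mathbb{S}^1$ is simultaneously an endpoint of a leaf $F_+$ of $\lambda^n_+$ and of a leaf $F_-$ of $\lambda^n_-$, one rotates the endpoint of (say) $F_+$ at $a$ by an infinitesimal angle along $\partial_\infty \mathbb{H}^2$.  At fixed $n$ only finitely many coincidences can occur, so the total perturbation is still arbitrarily small in the weak-$\ast$ topology on measured laminations; it preserves the rational-weight condition as well as the weak-$\ast$ convergence $\lambda^n_\pm \to \lambda_\pm$ as $n \to \infty$ required downstream.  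Combined with the observation above this gives $\delta(n)>0$.

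The main obstacle is conceptual rather than technical: one must check that this extra perturbation is genuinely compatible with the approximation scheme used downstream (Theorem \ref{thm:approx} and Lemma \ref{lem:laminations_graph}), and does not, for instance, destroy the "strong filling" property of the perturbed laminations.  Since the perturbation at each fixed $n$ is independent of the others and can be made arbitrarily small, these properties remain undisturbed.  A more intrinsic alternative would attempt to derive $E^n_+ \cap E^n_- = \emptyset$ directly from the strong filling hypothesis, by arguing that two leaves $F_+ \in \lambda_+$ and $F_- \in \lambda_-$ sharing an endpoint at infinity would produce long sub-segments (e.g.\ along $F_+$ itself, or along a geodesic asymptotic to the common endpoint staying between $F_+$ and $F_-$) with $i(\gamma,\lambda_+)+i(\gamma,\lambda_-)$ bounded, contradicting strong filling.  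This direct route is subtler though, because a sub-segment of $F_+$ contributes zero to $i(\gamma,\lambda_+)$ only transversally and one has to control nearby leaves of $\lambda_-$ near the common endpoint; the genericity argument is cleaner and is what I would write.
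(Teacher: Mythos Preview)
Your approach is correct but quite different from the paper's.  You observe that after the rationality perturbation the endpoint sets $E^n_\pm$ are finite, so the whole lemma collapses to $E^n_+ \cap E^n_- = \emptyset$, which you simply fold into the genericity step.  The paper instead derives the conclusion intrinsically from the strong filling and boundedness hypotheses on the original $\lambda_\pm$: it first proves an auxiliary Lemma~\ref{lem:parallel} (segments in the supports of $\lambda_+$ and $\lambda_-$ cannot stay $\varepsilon$-close over length $L$, since every leaf of $\lambda_+$ crossing one must exit through a short side, contradicting boundedness against the strong-filling lower bound), and its Corollary~\ref{cr:parallel} then says that two leaves with nearby ideal endpoints must separate by a definite visual amount at $C(o,n)$.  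The proof of Lemma~\ref{lem:endpoints} argues by contradiction from this: if $a_-$ and $a_+$ were too close, the corollary forces $b_\pm$ apart, and a long segment $\gamma$ trapped in the resulting thin region cannot meet $\lambda_\pm$ enough, contradicting strong filling.

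What each buys: your route is much shorter and entirely adequate for the literal statement and its one downstream use (extending the separation to $\lambda^{k,n}_\pm$ for large $k$).  The paper's route yields a $\delta(n)$ depending only on the strong-filling and boundedness constants, not on the particular rationalization chosen, and it isolates Lemma~\ref{lem:parallel}, which is of some independent interest (it is essentially the statement that the supports of $\lambda_+$ and $\lambda_-$ cannot fellow-travel).  One small point you glossed over: when you rotate an endpoint of $F_+$ you must also make sure the displaced leaf does not cross another leaf of $\lambda^n_+$; since there are only finitely many leaves this just means avoiding finitely many endpoint positions, but it is worth saying.
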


The proof will use the following simple lemma.

\begin{lemma} \label{lem:parallel}
  Let $\lambda_-,\lambda_+$ be two bounded measured geodesic lamination on the hyperbolic plane that strongly fill. There exists $\varepsilon>0$ and $L>0$ such that if $\gamma_+$ and $\gamma_-$ are geodesic segments in the support of $\lambda_+$ and $\lambda_-$, respectively, of length at least $L$, then one of the endpoints of $\gamma_+$ is at distance at least $\varepsilon$ from $\gamma_-$. 
\end{lemma}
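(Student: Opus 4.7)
The plan is to proceed by contradiction using a compactness argument on the space of bounded measured laminations. Assume the conclusion fails, so there exist sequences $\varepsilon_n \to 0$ and $L_n \to \infty$ together with geodesic segments $\gamma_+^n \subset \mathrm{supp}(\lambda_+)$ and $\gamma_-^n \subset \mathrm{supp}(\lambda_-)$, each of length at least $L_n$, with both endpoints of $\gamma_+^n$ lying within hyperbolic distance $\varepsilon_n$ of $\gamma_-^n$. First I would compose with isometries $g_n$ of $\HH^2$ chosen to send the midpoint of $\gamma_+^n$ to a fixed basepoint $o$ and the tangent direction of $\gamma_+^n$ to a fixed unit vector (passing to a subsequence so that the directions actually converge). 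Setting $\lambda_\pm^n := (g_n^{-1})_* \lambda_\pm$ and $\tilde\gamma_\pm^n := g_n^{-1}(\gamma_\pm^n)$, the $\lambda_\pm^n$ remain bounded with the same constants as $\lambda_\pm$, so by weak-$*$ compactness on Radon measures on the space of complete geodesics in $\HH^2$ I may extract a further subsequence along which $\lambda_\pm^n \rightharpoonup \lambda_\pm^\infty$. By construction the $\tilde\gamma_+^n$ exhaust a fixed complete geodesic $\gamma_\infty$ through $o$, and since $\varepsilon_n \to 0$ the $\tilde\gamma_-^n$ converge to the same geodesic $\gamma_\infty$.

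The geometric heart of the argument is the claim that no leaf of $\mathrm{supp}(\lambda_+^\infty)$ or of $\mathrm{supp}(\lambda_-^\infty)$ crosses $\gamma_\infty$ transversely. By the standard consequence of the Portmanteau inequality $\liminf_n \lambda_\pm^n(U) \geq \lambda_\pm^\infty(U)$ for open $U$, every leaf in the limit support is a limit (along a subsequence) of leaves in $\mathrm{supp}(\lambda_\pm^n)$. Each of those approximating leaves is disjoint from, or coincides with, $\tilde\gamma_\pm^n$, because distinct leaves of a lamination do not cross. Since transverse intersection of two geodesics of $\HH^2$ is an open condition in the space of geodesics, the limit leaf cannot cross $\gamma_\infty$ transversely. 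Consequently, for any sub-segment $\gamma \subset \gamma_\infty$, if $F_\gamma$ denotes the closure (in the space of geodesics) of the set of leaves meeting the interior of $\gamma$ transversely, then $F_\gamma \cap \mathrm{supp}(\lambda_\pm^\infty) \subset \{\gamma_\infty\}$, so $\lambda_+^\infty(F_\gamma) + \lambda_-^\infty(F_\gamma) \leq M$ where $M := \lambda_+^\infty(\{\gamma_\infty\}) + \lambda_-^\infty(\{\gamma_\infty\})$, which is finite because the atomic mass of a single leaf in a bounded lamination is controlled by the boundedness constant.

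To reach the contradiction I apply strong filling at level $\varepsilon = M+1$: since strong filling is isometry-invariant, $\lambda_+^n$ and $\lambda_-^n$ strongly fill with the same constants as $\lambda_+$ and $\lambda_-$, giving a length $c>0$ such that $i(\beta,\lambda_+^n) + i(\beta,\lambda_-^n) \geq M+1$ for every geodesic segment $\beta$ of length at least $c$ and every $n$. Taking $\gamma \subset \gamma_\infty$ of length $c$ and using the closed-set Portmanteau inequality $\limsup_n \lambda_\pm^n(F_\gamma) \leq \lambda_\pm^\infty(F_\gamma)$ on the bounded closed set $F_\gamma$ (which contains the open set of transversally crossing leaves), I obtain $\lambda_+^\infty(F_\gamma) + \lambda_-^\infty(F_\gamma) \geq M+1$, contradicting the upper bound $M$ from the previous paragraph. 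The main subtlety is precisely this last step: passing the lower bound from strong filling to the weak-$*$ limit requires working with the closed set $F_\gamma$ rather than the more natural open set of transversely crossing leaves, because Portmanteau applied to open sets goes in the wrong direction for this inequality; once this is understood, the rest is routine bookkeeping combining support inclusion with the non-crossing property of leaves in a lamination.
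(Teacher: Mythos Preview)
Your argument is correct, but it takes a substantially different route from the paper. The paper gives a short, direct crossing argument: assuming both endpoints $\gamma_{+,0},\gamma_{+,1}$ of $\gamma_+$ lie within $\varepsilon$ of points $\gamma_{-,0},\gamma_{-,1}$ on $\gamma_-$, any leaf of $\lambda_+$ that meets $\gamma_-$ must, since it cannot cross $\gamma_+$ (leaves of the same lamination are disjoint), exit through one of the two short connectors $[\gamma_{-,0},\gamma_{+,0}]$ or $[\gamma_{-,1},\gamma_{+,1}]$. Strong filling forces $i(\gamma_-,\lambda_+)$ to be large when $L$ is large (because $\gamma_-$ lies in a leaf of $\lambda_-$, so $i(\gamma_-,\lambda_-)=0$), while boundedness of $\lambda_+$ caps the total mass crossing the two connectors by a constant times $\varepsilon$. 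This is a three-line contradiction with no limiting process.

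Your approach instead normalizes by isometries, extracts weak-$*$ limits $\lambda_\pm^\infty$, shows that no leaf of either limit can cross the limiting geodesic $\gamma_\infty$, and then plays the Portmanteau inequalities for open and compact sets against each other to contradict strong filling. This is valid; your handling of the delicate direction of Portmanteau (passing the uniform lower bound through a compact set $F_\gamma$ rather than the open set of crossing leaves) is exactly the point that needs care, and you address it correctly. The trade-off is that the paper's argument is elementary and effective---one can read off an explicit relation between $\varepsilon$, $L$, the boundedness constant, and the strong-filling function---whereas your compactness proof is non-effective. On the other hand, your scheme is the sort of argument that transports well to settings where an explicit ``quadrilateral'' picture is unavailable.
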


\begin{proof}
  For clarity we consider $\gamma_-$ and $\gamma_+$ as oriented, and call $\gamma_{\pm,0}$ and $\gamma_{\pm,1}$ their endpoints. We assume by contradiction that $d(\gamma_{-,0},\gamma_{+,0})\leq \varepsilon$ and that $d(\gamma_{-,1},\gamma_{+,1})\leq \varepsilon$.

  Any leaf of $\lambda_+$ intersecting $\gamma_-$ must intersect either $[\gamma_{-,0},\gamma_{+,0}]$ or $[\gamma_{-,1},\gamma_{+,1}]$. However the strict filling hypothesis imply that the intersection of $\gamma_-$ with $\lambda_+$ is bounded from below by a constant which goes to infinity with $L$. So the sum of the intersections with $\lambda_+$ of $[\gamma_{-,0},\gamma_{+,0}]$ and of $[\gamma_{-,1},\gamma_{+,1}]$ is bounded from below by a constant going to infinity as $L\to \infty$. This however contradicts the fact that $\lambda_+$ is bounded, if $L$ is large and $\varepsilon$ is small.
\end{proof}

\begin{cor} \label{cr:parallel}
  For each $n>0$, there exists $\varepsilon_n>0$ as follows. Let $F_-$ and $F_+$ be leaves of $\lambda_-$ and $\lambda_+$, respectively, with endpoints $a_-$ and $a_+$ on $\partial \HH^2$, and let $b_-, b_+$ be the first intersection of $F_-, F_+$ with $C(o,n)$, when starting from $a_-, a_+$. Then either the visual metric from $o$ between $a_-$ and $a_+$ is at least $\varepsilon_n$, or the visual metric from $o$ between $b_-$ and $b_+$ is at least $\varepsilon_n$.
\end{cor}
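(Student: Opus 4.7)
The plan is to reduce to Lemma \ref{lem:parallel} by a continuity argument. Let $\varepsilon>0$ and $L>0$ be the constants provided by Lemma \ref{lem:parallel}. Given $F_\pm,a_\pm,b_\pm$ as in the statement, denote by $\gamma_\pm$ the geodesic subsegment of $F_\pm$ starting at $b_\pm$, oriented toward $a_\pm$, and of hyperbolic length $L$; let $c_\pm$ be its other endpoint. Since $\gamma_\pm$ lies in the support of $\lambda_\pm$ and has length $L$, Lemma \ref{lem:parallel} guarantees that at least one of the two endpoints $b_+,c_+$ of $\gamma_+$ lies at hyperbolic distance at least $\varepsilon$ from the segment $\gamma_-$. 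Since $b_-,c_-\in\gamma_-$, it suffices to produce $\varepsilon_n>0$ with the property that the simultaneous bounds $d_o(a_-,a_+)<\varepsilon_n$ and $d_o(b_-,b_+)<\varepsilon_n$ force both $d(b_-,b_+)<\varepsilon$ and $d(c_-,c_+)<\varepsilon$, contradicting Lemma \ref{lem:parallel}.

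The first bound is straightforward. The points $b_-,b_+$ lie on the compact circle $C(o,n)$, on which the visual metric $d_o$ and the restriction of the hyperbolic metric induce the same topology; consequently $d(b_-,b_+)$ tends to $0$ with $d_o(b_-,b_+)$ at a rate depending only on $n$, and the bound $d(b_-,b_+)<\varepsilon$ follows from requiring $d_o(b_-,b_+)<\varepsilon_n$ for $\varepsilon_n$ small enough.

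The second bound is the core of the argument. Consider the map
\[\Phi\colon \partial_\infty\HH^2\times C(o,n)\longrightarrow \HH^2\]
sending $(a,b)$ to the point at hyperbolic distance $L$ from $b$ along the geodesic ray issuing from $b$ toward $a$. This map is well-defined and continuous, and its domain is compact (homeomorphic to the $2$-torus), so $\Phi$ is uniformly continuous when $\partial_\infty\HH^2\times C(o,n)$ is equipped with any fixed continuous metric (for instance the product of $d_o$ with itself). By construction $c_\pm=\Phi(a_\pm,b_\pm)$, so $d(c_-,c_+)$ can be made smaller than $\varepsilon$ by taking $d_o(a_-,a_+)$ and $d_o(b_-,b_+)$ small enough; this threshold depends only on $n$, $L$, and $\varepsilon$. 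Setting $\varepsilon_n$ to be the minimum of the thresholds supplied by the two paragraphs yields the statement. The only (mild) obstacle is the bookkeeping between the two metrics $d_o$ and $d$ on $\overline{D(o,n)}\cup\partial_\infty\HH^2$, which is handled by compactness as above.
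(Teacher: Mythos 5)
Your proof is correct and is essentially the same argument as the paper's. The paper disposes of this corollary in one sentence — "otherwise, for $\varepsilon_n$ small enough, the segments of $F_-$ and $F_+$ of length $L$ starting from their first intersections with $C(o,n)$ towards the outside of $D(o,n)$ would contradict [Lemma~\ref{lem:parallel}]" — which is precisely your construction; you have merely filled in the implicit compactness/uniform-continuity step (the map $\Phi$ and the comparison of $d_o$ and $d$ on $C(o,n)$) that the paper leaves to the reader.
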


\begin{proof}
  This clearly follows from Lemma \ref{lem:parallel}, since otherwise, for $\varepsilon_n$ small enough, the segments of $F_-$ and $F_+$ of length $L$ starting from their first intersections with $C(o,n)$ towards the outside of $D(o,n)$ would contradict that lemma.
\end{proof}

\begin{proof}[Proof of Lemma \ref{lem:endpoints}]
  We consider $a_-=F_-(+\infty)$ and $a_+=F_+(+\infty)$ as in the statement of Lemma \ref{lem:endpoints}, at distance less than $\varepsilon_n$. It follows by Corollary \ref{cr:parallel} that the points $b_-$ and $b_+$ which are the first intersections of $F_-$ and $F_+$ with $C(o,n)$ are at distance at least $\varepsilon_n$. In fact, with the same type of argument, we can consider a rescaled visual metric on $C(0,n)$ and we may as well assume that the points $b_-$ and $b_+$ are at distance at least $C\varepsilon$ for a constant $C>1$ which will be adjusted later.

  We consider two cases. 

  First case: the segments of $F_-$ and $F_+$ between $a_-$ (resp. $a_+$) and $C(o,n)$ intersect at a point $p$. Since $d_o(b_-,b_+)\geq Cd_o(a_-, a_+)$, the hyperbolic distance between $D(o,n)$ and $p$ is bounded from below by a function $F(n)$ of $n$ which is bounded away from 0 by a constant depending on $C$. Let $\gamma$ be a geodesic segment going from $p$ to a point of $C(o,n)$, staying within the triangle boundedy by $C(o,n)$ and by the segments of $F_-$ and $F_+$ between $p$ and $C(o,n)$. $\gamma$ has length at least $F(n)$ and therefore, as $n\to \infty$ and by choosing $C$ big enough, must have large intersection with either $\lambda_-$ or $\lambda_+$ (because $\lambda_-$ and $\lambda_+$ strongly fill). But any leave of $\lambda_-$ or $\lambda_+$ intersecting $\gamma$ must intersect $C(o,n)$, a contradiction. See figure \ref{fig:notation1}.

\definecolor{uuuuuu}{rgb}{0.26666666666666666,0.26666666666666666,0.26666666666666666}
\definecolor{ffqqqq}{rgb}{1,0,0}
\begin{figure}[!h]
\begin{tikzpicture}[line cap=round,line join=round,>=triangle 45,x=1cm,y=1cm]
\clip(-4.3,-3.78) rectangle (8.84,6.3);
\draw [shift={(1.5973181254381705,8.1258075823761)},line width=0.5pt]  plot[domain=4.029956078586286:5.384183684640062,variable=\t]({1*9.001858067944205*cos(\t r)+0*9.001858067944205*sin(\t r)},{0*9.001858067944205*cos(\t r)+1*9.001858067944205*sin(\t r)});
\draw [shift={(1.6503294117647058,7.604176470588235)},line width=0.8pt]  plot[domain=3.982907677116938:5.554881166812981,variable=\t]({1*7.007424392046069*cos(\t r)+0*7.007424392046069*sin(\t r)},{0*7.007424392046069*cos(\t r)+1*7.007424392046069*sin(\t r)});
\draw [line width=3.6pt,color=ffqqqq] (1.08,0.62)-- (1.06,-0.86);
\draw (-2.92,-0.04) node[anchor=north west] {$ \partial_\infty\mathbb{H}^2 $};
\draw (-3.9,2.4) node[anchor=north west] {$ B(o,n) $};
\draw (-3.9,3.4) node[anchor=north west] {$ F_- $};
\draw (3.5,2.6) node[anchor=north west] {$ F_+ $};
\draw (0.70,0.78) node[anchor=north west] {$ \gamma $};
\draw (0.36,-0.86) node[anchor=north west] {$ a_+ $};
\draw (1.5,-0.94) node[anchor=north west] {$ a_- $};
\draw [shift={(-5.593748101465419,-4.05670102767756)},line width=2pt]  plot[domain=0.4225681039466382:1.3427547327428608,variable=\t]({1*7.757535873271658*cos(\t r)+0*7.757535873271658*sin(\t r)},{0*7.757535873271658*cos(\t r)+1*7.757535873271658*sin(\t r)});
\draw [shift={(7.267323507617761,-3.444333807712094)},line width=2pt]  plot[domain=1.7059915715599123:2.761271550453879,variable=\t]({1*7.028468340494026*cos(\t r)+0*7.028468340494026*sin(\t r)},{0*7.028468340494026*cos(\t r)+1*7.028468340494026*sin(\t r)});
\draw (0.12,0.84) node[anchor=north west] {$b_-$};
\draw (1.54,0.78) node[anchor=north west] {$b_+$};
\begin{scriptsize}
\draw [fill=uuuuuu] (1.082344930721114,-0.10586002218428667) circle (2pt);
\draw[color=uuuuuu] (1.3,0.00) node {$p$};
\end{scriptsize}
\end{tikzpicture}
\caption{Lemma \ref{lem:endpoints} Case 1}
\label{fig:notation1}
\end{figure}

  Second case: those segments do not intersect. We then apply exactly the same argument, but with $p$ replaced by the intersection between the segments $[a_-,b_+]$ and $[a_+,b_-]$, see figure \ref{fig:notation2}.
\end{proof}




\definecolor{xdxdff}{rgb}{0.49,0.49,1}
\definecolor{ffqqqq}{rgb}{1,0,0}
\begin{figure}[!h]
\begin{tikzpicture}[line cap=round,line join=round,>=triangle 45,x=1.0cm,y=1.0cm]
\clip(-4.3,-2.7) rectangle (7.4,6.3);
\draw [shift={(1.59,7.38)}] plot[domain=3.97:5.44,variable=\t]({1*8.43*cos(\t r)+0*8.43*sin(\t r)},{0*8.43*cos(\t r)+1*8.43*sin(\t r)});
\draw [shift={(1.76,7.43)}] plot[domain=3.92:5.62,variable=\t]({1*6.62*cos(\t r)+0*6.62*sin(\t r)},{0*6.62*cos(\t r)+1*6.62*sin(\t r)});
\draw [shift={(-11.09,-1.68)}] plot[domain=0.06:0.71,variable=\t]({1*11.72*cos(\t r)+0*11.72*sin(\t r)},{0*11.72*cos(\t r)+1*11.72*sin(\t r)});
\draw [shift={(15.77,-2.67)}] plot[domain=2.49:3.03,variable=\t]({1*14.16*cos(\t r)+0*14.16*sin(\t r)},{0*14.16*cos(\t r)+1*14.16*sin(\t r)});
\draw [line width=3.6pt,color=ffqqqq] (1.22,0.84)-- (1.12,-1.04);
\draw [shift={(5.91,-3.71)}] plot[domain=2.28:2.67,variable=\t]({1*5.96*cos(\t r)+0*5.96*sin(\t r)},{0*5.96*cos(\t r)+1*5.96*sin(\t r)});
\draw [shift={(-5.23,-4.28)}] plot[domain=0.44:0.76,variable=\t]({1*7.65*cos(\t r)+0*7.65*sin(\t r)},{0*7.65*cos(\t r)+1*7.65*sin(\t r)});
\draw (-2.92,-0.1) node[anchor=north west] {$ \partial_\infty\mathbb{H}^2 $};
\draw (-2.6,2.94) node[anchor=north west] {$ D(o,n) $};
\draw (-1.18,5.2) node[anchor=north west] {$ F_+ $};
\draw (2.92,4.98) node[anchor=north west] {$ F_- $};
\draw (0.82,0.8) node[anchor=north west] {$ \gamma $};
\draw (0.7,-1.3) node {$a_+$};
\draw (1.8,-1.3) node {$a_-$};
\draw (1.4,-0.1) node {$a$};
\end{tikzpicture}
\caption{Lemma \ref{lem:endpoints} Case 2}
\label{fig:notation2}
\end{figure}

By construction, all vertices of $\lambda^{k,n}_+$ (resp. $\lambda^{k,n}_-$) are arbitrarily close, for $k$ large enough, from the endpoint of a leave of $\lambda^n_+$ (resp. $\lambda^n_-$). It follows that for $k$ large enough, the statement of Lemma \ref{lem:endpoints} also applies (with a slightly different function $\delta$) to the endpoints of $\lambda^{k,n}_+$ and $\lambda^{k,n}_-$.

Since any vertex $v_j$ of $\Gamma_0$ is the endpoint of a leaf in either $\lambda^{k,n}_-$ or $\lambda^{k,n}_+$, but not both according to the previous argument, it follows that each vertex has a well defined type ($+$ or $-$) depending on which type of leaf it terminates.

\vspace{0.5cm}

\paragraph{{\bf 2nd step: A graph $\Gamma_1$ which satisfies the weaker condition (4')}}

Property (4) reduces 
to the following statement: let $\gamma$ be a geodesic path in $\mathbb{H}^2$ connecting the interval $\left(v_1v_2\right)$ to $\left(v_{p}v_{p+1}\right)$. Then the total weight of the edges of $\lambda_-^n$ and $\lambda_+^n$ it crosses is bigger than $\frac{\Lambda_n}{k}$. (The reduction comes from the fact that any closed path in $\Gamma^*$ intersecting the equator in two points can be ``straightened'' into the double cover of a geodesic. The weight of the intersection of this closed path with the equator is $-\frac{\Lambda_n}{k}$, so condition (4) is satisfied if and only if the sum of the intersections with all the geodesics in $\lambda_-^n$ and $\lambda_+^n$ is larger than $\frac{\Lambda_n}{k}$.)

The first modification of $\Gamma$ is done according to the following lemma

\begin{lemma}\label{lem:4'}
There exists a weighted graph $\Gamma_1$ arbitrarily close to $\Gamma_0$ which satisfies properties (1), (2), (3) and the following weakening of property (4):
\newline
(4') Let $\gamma$ be a geodesic path in $\mathbb{H}^2$. Then
\[i(\gamma,\lambda^{k,n}_-)+i(\gamma,\lambda^{k,n}_+)>0.\]
\end{lemma}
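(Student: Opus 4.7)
My plan is to recast (4') as a purely combinatorial condition on $\Gamma_0$, patch any failing cuts with finitely many auxiliary edges of arbitrarily small weight, and finally restore (3) by re-solving the cyclic linear system that governs the equator weights.

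First, any complete geodesic $\gamma$ in $\HH^2$ whose endpoints lie in two distinct arcs $(v_j,v_{j+1})$ and $(v_{j'},v_{j'+1})$ of $\partial_\infty\HH^2\setminus\{v_1,\dots,v_k\}$ partitions the vertex set into two cyclically contiguous groups $A=\{v_{j+1},\dots,v_{j'}\}$ and $B=\{v_{j'+1},\dots,v_j\}$. An edge $[v_a,v_b]$ of $\lambda^{k,n}_+\cup\lambda^{k,n}_-$ crosses $\gamma$ precisely when $\{v_a,v_b\}$ meets both $A$ and $B$, and this depends only on the combinatorial datum $(j,j')$, not on the exact positions of the endpoints of $\gamma$ within their arcs. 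Hence (4') reduces to the combinatorial statement that for every non-trivial cyclic bipartition $(A,B)$ of $V(\Gamma_0)$, the union $\lambda^{k,n}_+\cup\lambda^{k,n}_-$ contains at least one edge with one endpoint in each part.

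Second, for each such bipartition $(A,B)$ of $\Gamma_0$ that currently admits no crossing edge, I would add a single new edge joining the two ``corner'' vertices $v_{j+1}\in A$ and $v_{j'+1}\in B$ with a very small positive weight $w$. By the failure assumption all existing leaves have both endpoints in a single part of $(A,B)$, so none of them interlaces with $[v_{j+1},v_{j'+1}]$ on $\partial_\infty\HH^2$; the new edge is therefore disjoint in $\HH^2$ from every pre-existing leaf. I place it into $E_+$; if several added edges happen to interlace pairwise on $\partial_\infty\HH^2$, I distribute them between $E_+$ and $E_-$ so that each sub-lamination remains a disjoint union of complete geodesics. Only finitely many cuts are concerned (at most $k(k-3)/2$), so finitely many additions suffice, and (4') then holds by construction.

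Finally, property (3) is restored by re-solving the cyclic linear system $\beta_{l-1}+\beta_l=-\alpha_l$ for the equator weights $\beta_l$ attached to $[v_l,v_{l+1}]$, where $\alpha_l$ is the total weight of non-equator edges at $v_l$ in the modified graph. Starting from the baseline $\beta_l=-\Lambda_n/(2k)$, each added edge perturbs exactly two of the $\alpha_l$ by $w$, and by linearity and continuity of the cyclic system this perturbs the $\beta_l$ by $O(kw)$; for $w$ small enough all $\beta_l$ stay strictly negative and (2) is preserved, while (1) is untouched because the Hamiltonian equator is unchanged, and $\Gamma_1$ lies arbitrarily close to $\Gamma_0$ in the weak-$*$ topology. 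The main delicate point is the joint bookkeeping: fitting several added edges into the bipartite structure without producing crossings, and guaranteeing that the cyclic linear system for $(\beta_l)$ admits a strictly negative solution — the latter requires, when $k$ is even, the balance condition $\sum_l(-1)^l\alpha_l=0$, which I would arrange by fine-tuning the weights of the added edges (or, if needed, by introducing small counterbalancing edges on the opposite side, which does not spoil (4')).
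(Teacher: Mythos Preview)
Your approach works and is in some ways more careful than the paper's own proof. Two comparisons and two fixes.

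\textbf{Comparison with the paper.} The paper argues geometrically: a geodesic $\gamma$ joining $(v_p,v_{p+1})$ to $(v_{p'},v_{p'+1})$ either avoids $D(o,n)$ (and then must cross every leaf with an endpoint between $v_{p+1}$ and $v_{p'}$, total weight $|p'-p|\Lambda_n/k$) or meets $D(o,n)$ in a long segment (and then strong filling applies directly). Only the remaining borderline cuts need patching, and the paper invokes Lemma~\ref{lem:endpoints} to justify placing each patch edge in one of $E_\pm$. Your purely combinatorial route bypasses that case analysis and replaces the appeal to Lemma~\ref{lem:endpoints} by the sharper elementary observation that, since no existing leaf crosses the failing cut, no existing leaf interlaces with the corner edge $[v_{j+1},v_{j'+1}]$ at all. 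You also address property~(3) explicitly by re-solving the equator weights; the paper asserts that (1)--(3) ``persist'' under the modifications but does not spell out this adjustment.

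\textbf{Two points to tighten.} (i) Your proposed 2-colouring of mutually crossing patch edges need not exist (three chords on a circle can pairwise cross). But it is never needed: if a candidate patch edge $e'$ crosses a previously added patch edge $e$, then $e$ already has one endpoint in each side of the cut that $e'$ was supposed to fix, so $e$ resolves that cut as well. Hence processing failing cuts greedily --- or retaining a maximal non-crossing subfamily of patch edges --- yields pairwise disjoint patches that can all go into a single $E_+$. (ii) The parity obstruction for the cyclic system when $k$ is even is genuine, and ``fine-tuning the weights'' does not obviously resolve it, since with corner edges the parities of the two endpoints are forced. The cleanest fix is simply to arrange $k$ odd (e.g.\ split one vertex into two, exactly as in the next step, Lemma~\ref{lem:splitting}); then $\beta_{l-1}+\beta_l=-\alpha_l$ is uniquely and continuously solvable in $\alpha$, and your perturbation argument goes through cleanly.
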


\begin{proof}
Note that two cases are easily understood: if $\gamma$ does not enter the disk $D(o,n)$ and joins the two intervals $(v_p,v_{p+1})$ to $(v_{p',}v_{p'+1})$, then $\gamma$ must cross every edge emanating between $v_{p+1}$ and $v_{p'}$, with total weight $|p'-p|\Lambda_n/k$. On the opposite, if the length of the intersection of $\gamma$ with the disk $D(o,n)$ is large enough, then the conclusion follows from the strong filling hypothesis. Nevertheless, it may happen that, for some $p$ and $p'$, a geodesic joining $(v_p,v_{p+1})$ to $(v_{p',}v_{p'+1})$ does not cross either $\gamma^{k,n}_-$ nor $\gamma^{k,n}_+$. In this case, we wish to add an edge between $v_p$ and $v_{p'}$ of small weight. We can indeed choose a lamination to which we add an edge since $\gamma$ cannot be close to both $\lambda^{k,n}_-$ and $\lambda_+^{k,n}$ (Lemma \ref{lem:endpoints}). 
\end{proof}

We still denote by $\lambda^{k,n}_-$ and $\lambda^{k,n}_+$ the geodesic laminations associated to the modified graph $\Gamma_1$.
\vspace{0.5cm}

\paragraph{{\bf 3rd step: From (4') to (4)}} The last step is to prove that property (4') in fact implies property (4) for an arbitrarily close weighted graph. 

\begin{lemma}\label{lem:splitting}
Let $\Gamma_1$ be the graph constructed in step 2. We can find a graph $\Gamma$ arbitrarily close to $\Gamma_1$ satisfying properties (1) to (4).
\end{lemma}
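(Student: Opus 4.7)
The plan is to upgrade the strict positivity of (4$'$) into the quantitative inequality (4). After the reduction explained at the beginning of Step 2, and since the equator edges of $\Gamma_1$ all carry weight $-\Lambda_n/(2k)$, condition (4) reduces, for the simplest cycles in $\Gamma^*$ crossing the equator twice, to requiring that every geodesic $\gamma$ joining two non-adjacent intervals $(v_p,v_{p+1})$ and $(v_{p'},v_{p'+1})$ satisfy $i(\gamma,\lambda^{k,n}_+)+i(\gamma,\lambda^{k,n}_-)>\Lambda_n/k$, while (4$'$) only gives the weaker strict positivity. Cycles crossing the equator more often will decompose into such simpler pieces and follow by summation.

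The first step would be to extract a uniform lower bound $\epsilon>0$ from (4$'$), using that $\lambda^{k,n}_\pm$ have only finitely many leaves so that the set of combinatorial crossing patterns realized by the relevant geodesics is finite and the infimum is actually achieved; this produces $\epsilon>0$ with $i(\gamma,\lambda^{k,n}_+)+i(\gamma,\lambda^{k,n}_-)\geq \epsilon$ for every such $\gamma$. The second step is the \emph{splitting} itself: choose an integer $m$ with $\Lambda_n/(km)<\epsilon$, replace each vertex $v_j$ of $\Gamma_1$ by $m$ closely spaced split-offs $v_j^{(1)},\dots,v_j^{(m)}$ on $\partial\mathbb{H}^2$, and replace each leaf of weight $w$ ending at $v_j$ by $m$ parallel leaves of weight $w/m$, with endpoints distributed one per split-off near the original endpoints. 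The new graph $\Gamma$ has $km$ equator vertices carrying weight $-\Lambda_n/(2km)$ and diagonal weight $\Lambda_n/(km)$ at each split-off, so conditions (1), (2) and (3) are preserved, and the associated lamination is arbitrarily close to that of $\Gamma_1$ in the weak-$*$ topology provided the splittings are taken fine enough.

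The final step is to verify (4) on $\Gamma$ by distinguishing two regimes for a simple cycle in $\Gamma^*$ crossing the equator twice. In the \emph{inter-cluster} case, where the two crossings lie in clusters of distinct old vertices, the associated geodesic is close to a pre-split one whose intersection is at least $\epsilon$; this lower bound is essentially preserved by the splitting, so the intersection exceeds the new threshold $\Lambda_n/(km)$. In the \emph{intra-cluster} case, where both crossings lie within the cluster of a single old vertex $v_j$, the cycle fails to bound a face of $\Gamma^*$ only if at least two split-offs of $v_j$ lie strictly in between, and each of these issues diagonals of total weight $\Lambda_n/(km)$ which the geodesic crosses, giving intersection at least $2\Lambda_n/(km)$. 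The hard part I expect is the continuity statement required in the inter-cluster case: one must check that the intersection of a slightly perturbed geodesic with the bundle of $m$ close parallel leaves of total weight $w$ is still close to the intersection with the original leaf of weight $w$, which forces the splittings on $\partial\mathbb{H}^2$ to be chosen much finer than the typical scale at which endpoints of distinct leaves of $\lambda^{k,n}_\pm$ are separated.
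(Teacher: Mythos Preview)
Your plan matches the paper's argument in its essentials: extract from (4$'$) a uniform positive lower bound on the intersection numbers (the paper calls it $m$, you call it $\epsilon$), then split each vertex into enough pieces that the equatorial edge weight drops below $m/2$, so that condition (4) becomes automatic. The paper's version is in fact terser than yours --- it simply declares that ``the resulting graph satisfies condition (4)'' without explicitly redistributing the diagonal leaves among the split-off vertices or separating out the intra-cluster case, both of which you take the trouble to handle.

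One point deserves care. You propose to split each diagonal leaf of weight $w$ into $m$ parallel copies of weight $w/m$, ``one per split-off.'' You should check that this can be done so that the resulting leaves remain pairwise disjoint, i.e.\ so that each of $\lambda^{k,n}_\pm$ stays a lamination and the graph stays planar. The naive assignment $v_a^{(i)}\mapsto v_b^{(i)}$ can introduce crossings when several original leaves share the endpoint $v_a$. The clean fix --- implicit in the paper's earlier ``leaf-splitting'' procedure --- is not to duplicate leaves but to \emph{redistribute} the existing leaves (splitting individual weights where needed) among the split-off vertices so that each split-off carries total diagonal weight $\Lambda_n/(km)$; this can always be arranged without crossings since it only nudges endpoints inside an arbitrarily small arc. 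With that adjustment your inter-cluster continuity worry also largely dissolves: the new lamination is the old one with endpoints moved within their clusters, so a geodesic joining intervals in two distinct non-adjacent clusters still crosses at least the leaves crossed by one of the pre-split geodesics, giving intersection $\ge m$ exactly rather than approximately.
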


\begin{proof}
For any pair of vertices $p$ and $p'$, we know that the geodesic joining $(v_p,v_{p+1})$ to $(v_{p',}v_{p'+1})$ has positive mass. Denote my $m=m(k)$ the smallest intersection:
\[m=\inf_{p,p'}\left\lbrace i(\gamma,\lambda^{k,n}_-)+i(\gamma,\lambda^{k,n}_+)\right\rbrace\]
where, for each $p,p'$, $\gamma$ is a geodesic joining $(v_p,v_{p+1})$ to $(v_{p',}v_{p'+1})$. (Note that the intersection number does not depend on which geodesic is chosen with endpoint in those intervals.)

We now split the set of vertices according to the following procedure. Any vertex $v_p$ is replaced by the set $v_p^{i_1},\cdots,v_p^{i_p}$ on $\mathbb{S}^1$ so that $v_p^{i_1},\cdots,v_p^{i_p}$ are cyclically ordered and arbitrarily close. We replace the equatorial edges by edges, for any $j$, from $v_p^{i_j}$ to $v_p^{i_{j+1}}$ of equal weight. We choose $i_p$ so that the weight of any edge $(v_p^{i_j},v_p^{i_{j+1}})$ is less that $m/2$.

The resulting graph satisfies condition (4) and is arbitrarily close to $\Gamma_1$ since the $v_{i_j}$ are arbitrarily close to the former $v_p$.
\end{proof}

\begin{proof}[Proof of Theorem \ref{thm:approx}]
For each integers $n$, we consider the graph $\Gamma_0$, with $k$ large enough so that $\Gamma_0$ satisfies the hypothesis of Lemma \ref{lem:laminations_graph}. We modify it once with the process described in Lemma \ref{lem:4'}, adding leaves of weights $\frac{1}{n}$ so that we get a graph satisfying properties (1) -- (4'). Then we add vertices as described in lemma \ref{lem:splitting}. The resulting graph satisfies properties (1) -- (4) and has a number of vertices $K=K(n)$. We can now apply \cite[Theorem 1.4]{idealpolyhedra} for each integer $n\in\mathbb{N}$ we get an ideal polyhedron $P_n$ in $AdS^3$ with $K$ vertices.

The fact that $\lambda_+(P_n)$ converge to $\lambda_+$ and $\lambda_-(P_n)$ converge to $\lambda_-$ as $n\to \infty$ follows directly from the definition of the topology on $\cML$ and Lemma \ref{lem:laminations_graph}. 
\end{proof}

We now associate to those truncated laminations, a discrete approximation of the quasi-circle we are looking for. Indeed, let $P_n$ be the ideal polyhedra we constructed in the previous theorem. Its vertices form a subset $V_n\subset \partial AdS^3$, which are naturally ordered since each can be connected to the next by a space-like geodesic. We will see in the next section that those subsets $V_n$ can be considered as a ``discrete homeomorphism from $\RP^1$ to $\RP^1$'', that is, an increasing function from a finite subset of $\RP^1$ to $\RP^1$, and that it is ``uniformly quasi-symmetric'' in a natural sense. 


\section{Proof of Theorem \ref{tm:bending}.} 

We now come to the proof of the main theorem. 

\subsection{Discrete approximations of quasi-symmetric homeomorphisms}

We first introduce a simple notion of quasi-symmetry for maps from a finite subset of $\RP^1$ to $\RP^1$. In this section some definitions depend on a parameter $\varepsilon\in (0,1)$, however it appears possible to consider $\varepsilon$ fixed, for instance $\varepsilon=1/2$. 

\begin{defi}
  Let $F\subset \RP^1$ be a finite subset, and let $v:F\to \RP^1$ be an increasing map. Let $\varepsilon\in (0,1)$ and let $K>1$. We say that $v$ is $(K,\varepsilon)$-quasi-symmetric if for all $a,b,c,d\in F$ with $[a,b;c,d]\in [-1-\varepsilon,-1+\varepsilon]$, $[v(a),v(b); v(c),v(d)]\in [1/K,K]$.
\end{defi}

In the sequel, we say that  $a,b,c,d\in \mathbb{RP}^1$ with $[a,b;c,d]\in [-1-\varepsilon,-1+\varepsilon]$ are in $\varepsilon$-symmetric position.

\begin{lemma} \label{lm:restriction}
  For all $K>1$ and $\varepsilon \in (0,1)$, there exists $K'>0$ such that if $v:\RP^1\to \RP^1$ is a $K$-quasi-symmetric homeomorphism, then the restriction of $v$ to any finite subset of $\RP^1$ is $(K',\varepsilon)$-quasi-symmetric. 
\end{lemma}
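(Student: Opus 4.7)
The strategy is a compactness argument exploiting the Möbius invariance of the cross-ratio together with the fact that $K$-quasi-symmetry is preserved by both pre- and post-composition with elements of $\mathrm{PSL}_2(\mathbb{R})$.

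Given a $K$-quasi-symmetric homeomorphism $v$ and any four points $(a,b,c,d)$ in $\varepsilon$-symmetric position, I would first pre-compose $v$ with a Möbius transformation to arrange $(a,b,c)=(-1,0,1)$. Since the map $t\mapsto cr(-1,0,1,t)$ is a homeomorphism of $\RP^1$, the $\varepsilon$-symmetry condition $[a,b;c,d]\in[-1-\varepsilon,-1+\varepsilon]$ then forces $d$ to lie in a closed subset $D_\varepsilon\subset\RP^1\setminus\{-1,0,1\}$ which is compact in $\RP^1$. Similarly, I would post-compose $v$ with a Möbius transformation to arrange $v(-1)=-1$, $v(0)=0$, $v(1)=1$. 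Neither normalization changes the $K$-quasi-symmetry constant of $v$ nor the target cross-ratio $[v(a),v(b);v(c),v(d)]$.

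The core step is then the classical compactness statement: the set $QS^{0}_K$ of orientation-preserving $K$-quasi-symmetric homeomorphisms of $\RP^1$ fixing the three points $-1,0,1$ is compact in the topology of uniform convergence. This is the boundary-value version of the normal-family property for $K$-quasi-conformal self-homeomorphisms of $\D$ normalized by three boundary points. On the compact product $QS^{0}_K\times D_\varepsilon$ the evaluation map $(v,d)\mapsto v(d)$ is continuous, and since each $v\in QS^{0}_K$ is a homeomorphism with $d\notin\{-1,0,1\}$, the image $v(d)$ avoids $\{-1,0,1\}$. Hence $v(d)$ takes values in a compact subset of $\RP^1\setminus\{-1,0,1\}$, and the cross-ratio $[-1,0,1,v(d)]=[v(a),v(b);v(c),v(d)]$ is a continuous function on this compact set taking values in a compact subset of $\RR\setminus\{0\}$. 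This produces the required constant $K'$.

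The main technical obstacle is the invocation of the compactness of $QS^{0}_K$; everything else is essentially cross-ratio bookkeeping and continuity. An alternative, more quantitative route would be to start from an $\varepsilon$-symmetric 4-tuple, perturb one point to reach a genuinely symmetric 4-tuple, apply the Fletcher--Markovic bound of the previous lemma to the perturbed configuration, and then control the error using the modulus of continuity of a normalized $K$-quasi-symmetric map. Either way, the underlying principle is that a $K$-quasi-symmetric map cannot deform the internal ratios of distances of a bounded configuration by more than a bounded amount depending only on $K$ and the configuration.
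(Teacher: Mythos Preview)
Your compactness argument is correct and is a genuinely different route from the paper's proof. The paper does not argue via normal families at all: instead it extends the $K$-quasi-symmetric map $v$ to a quasi-isometry $V:\HH^2\to\HH^2$ (with constants depending only on $K$), translates the cross-ratio bounds on $(a,b,c,d)$ into bounds on the hyperbolic distances between the geodesics $(ab),(cd)$ and between $(bc),(ad)$, pushes these bounds forward by the quasi-isometry, and uses the Morse lemma to pass from $V((ab))$ back to the geodesic $(v(a)v(b))$. This yields bounds on the corresponding distances between image geodesics, hence on the image cross-ratio.

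Each approach has its own virtue. Your normal-family argument is short and soft, relying only on compactness of $QS^0_K$ and continuity of the cross-ratio; it is exactly what is needed for the lemma as stated and nothing more. The paper's approach is more constructive in spirit: in principle one can read off an explicit $K'$ from the quasi-isometry constants and the Morse-lemma constants, and the geometric picture (cross-ratio $\leftrightarrow$ distance between geodesics) is reusable elsewhere in the paper. Note, incidentally, that the compactness of $QS^0_K$ that you invoke is itself usually proved via the quasi-conformal extension and equicontinuity, so the two arguments are not unrelated at a deeper level; your version simply packages that analytic input as a black box. The ``alternative route'' you sketch at the end---perturbing to a genuinely symmetric $4$-tuple and controlling the error---is closer in flavour to what the paper actually does, though the paper handles all nearly-symmetric configurations at once rather than perturbing.
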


\begin{proof}
We need to show that a quasi-symmetric map $u$ takes 4-tuples of points with bounded and bounded away from 0 cross-ratios to 4-tuples with the same property, with different constants. This fact is probably well-known, we provide a proof for the reader's convenience.

We consider a quasi-isometric extension of $u$. This is a map $U:\mathbb{H}^2\rightarrow\mathbb{H}^2$ which extends to $\partial_\infty\mathbb{H}^2=\mathbb{RP}^1$ as $u$ and satisfies, for all $x,y\in\mathbb{H}^2$,
\[\frac{1}{A}d(x,y)-A\leqslant d(U(x),U(y))\leqslant Ad(x,y)+A,\]
where $d$ is the hyperbolic distance and $A$ depends on the quasi-symmetric constant of $u$, see \cite{fletchermarkovic}.

For two points $\alpha$ and $\beta$ in $\mathbb{RP}^1$,  we denote by $(\alpha\beta)$ the hyperbolic geodesic with endpoints $\alpha$ and $\beta$.
Let $a,b,c,d$ be four points on $\mathbb{RP}^1$ such that 
\[1/K\leqslant cr(a;b;c;d)\leqslant K.\]
Those bounds on the cross-ratio translates into bounds on the distance between geodesics, namely, the cross-ratio is bounded and bounded away from 0 if and only if the distance between $(ab)$ and $(cd)$ and the distance between $(bc)$ and $(ad)$ are both bounded. This follows from the fact that a 4-tuple of points in $\mathbb{RP}^1$ is completely determined, modulo the isometry group of $\mathbb{H}^2$, by its cross-ratio.

By the quasi-isometry property, we get that the images $U(ab)$ and $U(cd)$ are within bounded distance, and same for the distances between $U(bc)$ and $U(ad)$. Moreover $U(ab)$ is within bounded distance from the geodesic $(u(a)u(b))$ \cite[paragraph 11.8, lemma 6]{ratcliffefoundations}. We conclude that the distance between $(u(a)u(b))$ and $(u(c)u(d))$ is bounded (same for $(u(b)u(c))$ and $(u(a)u(d))$), which in turn gives that the cross ratio
\[cr(u(a);u(b);u(c);u(d))\]
is bounded and bounded away from 0. 
\end{proof}

Conversely, if a sequence of discrete uniformly quasi-symmetric homeomorphisms converges to a limit, it must be quasi-symmetric.

\begin{lemma} \label{lem:limit}
  Let $u:\RP^1\to \RP^1$ be a continuous map, and let $(F_n,v_n)_{n\in \N}$ be a sequence of pairs such that:
  \begin{itemize}
  \item for all $n$, $F_n\subset \RP^1$ is a finite subset, and for all $x\in \RP^1$, there exists a sequence $(x_n)_{n\in \N}$, with $x_n\in F_n$, such that $x_n\to x$,
  \item $v_n\to u$, in the sense that if $x_n\in F_n$ and $x_n\to x$ then $v_n(x_n)\to u(x)$,
  \item the $v_n$ are $(K,\varepsilon)$-quasi-symmetric, for fixed $K>1$ and $\varepsilon>0$.  
  \end{itemize}
  Then $u$ is $K$-quasi-symmetric.
\end{lemma}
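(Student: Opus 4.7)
The plan is to deduce the lemma directly from the Fletcher--Markovic characterization of $K$-quasi-symmetric homeomorphisms recalled earlier, together with the continuity of the cross-ratio. The only real work is, for each symmetric $4$-tuple in $\RP^1$, to transport the uniform discrete bound satisfied by the $v_n$ to a bound on the corresponding image cross-ratio of $u$.

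Fix a symmetric $4$-tuple $(a,b,c,d)\in(\RP^1)^4$. Using the density hypothesis (for every $x\in\RP^1$ there is a sequence $x_n\in F_n$ converging to $x$), pick sequences $a_n,b_n,c_n,d_n\in F_n$ with $a_n\to a$, $b_n\to b$, $c_n\to c$, $d_n\to d$. Since the cross-ratio is a continuous rational function on $4$-tuples of pairwise distinct points, $[a_n,b_n;c_n,d_n]\to [a,b;c,d]=-1$; in particular, for $n$ large, $(a_n,b_n,c_n,d_n)$ is in $\varepsilon$-symmetric position, and the $(K,\varepsilon)$-quasi-symmetry of $v_n$ gives
\[ [v_n(a_n),v_n(b_n);v_n(c_n),v_n(d_n)]\in[1/K,K]. \]
The convergence hypothesis then yields $v_n(a_n)\to u(a)$, $v_n(b_n)\to u(b)$, $v_n(c_n)\to u(c)$, $v_n(d_n)\to u(d)$. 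Passing to the limit in the displayed inclusion (the interval $[1/K,K]$ is closed, and the cross-ratio is continuous at $4$-tuples of distinct points), we obtain $[u(a),u(b);u(c),u(d)]\in[1/K,K]$. Since this bound holds for every symmetric $4$-tuple, the Fletcher--Markovic criterion recalled above implies that $u$ is $K$-quasi-symmetric.

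The one subtlety to check is that the image $4$-tuple $(u(a),u(b),u(c),u(d))$ consists of pairwise distinct points, so that the cross-ratio is defined and the continuity argument really applies. Monotonicity of $u$ is inherited from the monotonicity of the $v_n$ by pointwise convergence. Moreover, a collapse $u(x)=u(y)$ with $x\neq y$ would force the discrete cross-ratios $[v_n(a_n),v_n(b_n);v_n(c_n),v_n(d_n)]$ to escape every bounded closed subset of $(0,\infty)$ for suitable symmetric choices with $a_n\to x$, $b_n\to y$, contradicting the uniform $(K,\varepsilon)$-quasi-symmetry. Hence $u$ is an order-preserving homeomorphism of $\RP^1$, and the Fletcher--Markovic criterion genuinely gives the desired conclusion. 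There is no deep obstacle in the proof; the only thing to handle with care is the simultaneous limit $v_n(a_n)\to u(a)$ when the argument $a_n$ itself moves.
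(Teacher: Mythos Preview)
The proposal is correct and follows essentially the same approach as the paper: approximate a symmetric $4$-tuple by points in $F_n$, use continuity of the cross-ratio to ensure $\varepsilon$-symmetric position for large $n$, apply the discrete $(K,\varepsilon)$-bound, and pass to the limit. Your treatment is in fact more careful than the paper's brief argument, which does not explicitly address the distinctness of the image points or the fact that $u$ must be a homeomorphism for the Fletcher--Markovic criterion to apply (the paper defers that discussion to the proof of the subsequent compactness lemma).
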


\begin{proof}
Let $a,b,c,d$ be 4 points in $\mathbb{RP}^1$ in symmetric position. There exists 4 sequences $(a_n), (b_n), (c_n)$ and $(d_n)$ converging to $a,b,c$ and $d$ respectively. Since the cross-ratio is a continuous function, for $n$ big enough, the 4 points $a_n,b_n,c_n$ and $d_n$ are in $\varepsilon$-symmetric position. It follows that $[v(a_n),v(b_n); v(c_n),v(d_n)]\in [1/K,K]$. The result follows from the definition of $u$ and continuity of the cross-ratio.
\end{proof}

Finally we can state a kind of compactness result for $(K,\varepsilon)$-quasi-symmetric maps.

\begin{lemma}\label{lem:precompactness}
  Let $K>1, \varepsilon\in (0,1)$, and let $(F_n,v_n)_{n\in \N}$ be a sequence of pairs, where
  \begin{enumerate}
  \item for each $n$, $F_n$ is a finite subset of $\RP^1$, and for all $x\in \RP^1$, there exists a sequence $(x_n)_{n\in \N}$, with $x_n\in F_n$, such that $x_n\to x$,
  \item the $v_n$ are $(K,\varepsilon)$-quasi-symmetric.
  \end{enumerate}
  Then there exists a subsequence of $(F_n, v_n)_{n\in \N}$ converging to a limit $u$ in the sense of Lemma \ref{lem:limit}.
\end{lemma}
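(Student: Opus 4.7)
The strategy is a discrete Arzelà--Ascoli argument. First I would normalize: pick three reference points $p_1,p_2,p_3\in\RP^1$ (for concreteness $0,1,\infty$), and for each $n$ large use hypothesis (1) to select $p_{i,n}\in F_n$ with $p_{i,n}\to p_i$. Post-composing $v_n$ with the unique Möbius transformation $\phi_n\in\mathrm{PSL}_2(\R)$ sending $v_n(p_{i,n})$ to $p_i$ yields $\tilde v_n=\phi_n\circ v_n$, which is still $(K,\varepsilon)$-quasi-symmetric since cross-ratios are Möbius-invariant. This normalization is legitimate for the intended application, because the subsequent theorems are stated modulo post-composition by $\mathrm{PSL}_2(\R)$ (equivalently, by $\mathrm{Isom}_0(AdS^3)$ on each ruling of $\partial AdS^3$). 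Next, choose a countable dense subset $D\subset\RP^1$ containing $\{p_1,p_2,p_3\}$, fix for each $y\in D$ an approximating sequence $y_n\in F_n$, and apply a Cantor diagonal extraction: compactness of $\RP^1$ furnishes a subsequence along which $\tilde v_n(y_n)\to u_0(y)$ exists in $\RP^1$ for every $y\in D$.

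The central technical step, and the main obstacle, is to upgrade this pointwise convergence on $D$ to a uniform continuity estimate independent of $n$. The cleanest route is to interpolate each $\tilde v_n$ to a piecewise-projective orientation-preserving homeomorphism $V_n:\RP^1\to\RP^1$ that is projective on each of the finitely many arcs determined by $F_n$ and agrees with $\tilde v_n$ on $F_n$. Using the $(K,\varepsilon)$-quasi-symmetric hypothesis together with the three-point normalization, one verifies that the $V_n$ form a uniformly $K'$-quasi-symmetric family in the classical continuous sense, for some $K'=K'(K,\varepsilon)$; the point is that every 4-tuple of $\RP^1$ in symmetric position can be compared, via a bounded number of Möbius-invariant adjustments, to a 4-tuple of points of $F_n$ in $\varepsilon$-symmetric position, provided $F_n$ is sufficiently dense (which hypothesis (1) guarantees asymptotically). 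The classical compactness theorem for normalized quasi-symmetric homeomorphisms of $\RP^1$, as recalled in \cite{fletchermarkovic}, then yields a further subsequence along which $V_n$ converges uniformly to a quasi-symmetric homeomorphism $u:\RP^1\to\RP^1$.

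Finally, $u$ is continuous, agrees with $u_0$ on $D$, and it remains to verify convergence in the sense of Lemma \ref{lem:limit}: given any $x\in\RP^1$ and any sequence $x_n\in F_n$ with $x_n\to x$, the identity $\tilde v_n(x_n)=V_n(x_n)$ together with the uniform convergence $V_n\to u$ and continuity of $u$ give $\tilde v_n(x_n)\to u(x)$ at once. The hard part is therefore the construction of the interpolations $V_n$ and the verification that the $(K,\varepsilon)$ cross-ratio condition, tested only on 4-tuples of $F_n$ in $\varepsilon$-symmetric position, upgrades to a uniform classical quasi-symmetric bound on the $V_n$; once this bridge between the discrete and continuous settings is built, the rest reduces to standard compactness for normalized quasi-symmetric homeomorphisms.
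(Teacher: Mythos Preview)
Your outline is sound in spirit, but the paper takes a more direct route that sidesteps the interpolation altogether. After the same three-point normalization and a diagonal/Tychonoff extraction of a subsequential limit $u$, the paper argues \emph{directly on $u$} that it must be a homeomorphism: if $u$ had a jump at $x$, choose any $z\neq x$, complete $(x,y_n,z)$ to a symmetric $4$-tuple $(x,y_n,z,t_n)$, and the cross-ratio bound on the approximants forces $u(t_n)$ away from $u(z)$, so every value of $u$ would be isolated --- impossible with three prescribed values. A parallel cross-ratio argument rules out $u(x)=u(y)$ for $x\neq y$. Continuity plus injectivity on the compact $\RP^1$ then gives a homeomorphism, and Lemma~\ref{lem:limit} finishes. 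No continuous interpolants are ever built.

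Your approach instead reduces everything to the classical compactness theorem for normalized quasi-symmetric homeomorphisms, which is conceptually pleasant, but the price is the step you yourself flag as ``the hard part'': showing the interpolants $V_n$ are \emph{uniformly} quasi-symmetric in the classical sense. You have not actually justified this, and it is not automatic --- density of $F_n$ lets you approximate a symmetric $4$-tuple by an $\varepsilon$-symmetric $F_n$-$4$-tuple, but comparing $V_n$ at the original points to $v_n$ at the approximants requires already controlling the lengths of the image arcs $[v_n(a_i),v_n(a_{i+1})]$, which is precisely the equicontinuity you are trying to establish. (A smaller point: ``projective on each arc'' is underdetermined, since a M\"obius map of $\RP^1$ needs three points; you presumably mean affine in a fixed chart.) The paper's short cross-ratio argument on the limit avoids this detour entirely.
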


This limit $u$ is then quasi-symmetric by Lemma \ref{lem:limit}.

\begin{proof}
Without loss of generality, we can assume that $n$ is large enough so that $F_n$ has at least three points. Since pre- and post-composing by a Möbius transformation does not change the fact that $v_n$ is $(K,\varepsilon)$-quasi-symmetric, we can assume that the points $0,-1,\infty\in\RP^1$ are sent to $0,-1,\infty$ respectively.

Since $\mathbb{RP}^1$ is compact,$(F_n,v_n)_{n\in\N}$ has a converging subsequence. Indeed, the Arzela-Ascoli Theorem applies to the set of maps $v_n:F_n\rightarrow \mathbb{RP}^1$ for a fixed $n$ and implies that the set of all sequences $(F_n,v_n)_{n\in \N}$ is relatively compact by Tychonoff's theorem. We denote by $u$ this sublimit. It is defined on the whole $\RP^1$ because of hypothesis (1). 

It remains to show that a non-constant and non-decreasing quasi-symmetric map from $\mathbb{RP}^1$ onto itself is a homeomorphism. This is certainly well-known to experts but we include a proof for completeness.

Note that $u$ also satisfies $u(-1)=-1$, $u(0)=0$ and $u(\infty)=\infty$. 
We first show that $u$ is continuous. By contradiction, assume that $u$ has a ``jump'' at $x\in\mathbb{RP}^1$: there exists a sequence $y_n$ converging to $x$ such that, for a fixed $\delta$, $\abs{u(x)-u(y_n)}>\delta$. Take any point $z\in\mathbb{RP}^1$, $z\neq x$. There exists a sequence $(t_n)$ such that $[x,y_n;z,t_n]=-1$ (in particular $t_n\to z$) and $[u(x),u(y_n);u(z),u(t_n)]$ is bounded. Hence $u(t_n)$ cannot approach $u(z)$. Since $u$ is non-decreasing, what is true for $(t_n)$ is also true in a whole neighborhood of $z$. We conclude that $u(z)$ is isolated in $u(\mathbb{RP}^1)$. Since $z$ was chosen arbitrarily (different from $x$), this means that $u$ has at most 2 values, contradicting the fact that $u(-1)=-1$, $u(0)=0$ and $u(\infty)=\infty$.

Finally, we show that $u$ is injective using the same kind of argument. Assuming that $u(x)=u(y)$, we take any $z\in\mathbb{RP}^1$ and find the point $t\in\mathbb{RP}^1$ such that $\crochet{x,y;z,t}=-1$. For $\crochet{u(x),u(y);u(z),u(t)}$ to remain bounded, we must have $u(z)=u(t)$ and then again, $u$ has only 2 values.
\end{proof}

\subsection{The discrete approximations are uniformly quasi-symmetric.}

Recall that we started with a pair $(\lambda_-, \lambda_+)$  of laminations on $\HH^2$ that strongly fill. We associated to this pair:
\begin{itemize}
\item Sequences $\lambda^n_\pm=\lambda^n_\pm$ of polyhedral pairs converging to $\lambda_\pm$, with $k_n$ vertices. 
\item A sequence of quasi-symmetric embeddings $v^n:\Z/k_n\Z\to\partial AdS^3$ such that $v^n_*(\lambda^n_\pm)$ are the measured bending laminations on the boundary of $CH(v^n(Z/k_n\Z))$.
\item A sequence of ideal polyhedra $P^n=CH(v^n(\Z/k_n\Z))\subset AdS^3$. 
\item A sequence of pairs of ideal polygons $p^n_\pm\subset \HH^2$, isometric respectively to the future and past boundary components of $P^n$.
\end{itemize}
We denote by $V^n$ the set of (ideal) vertices of $P^n$. The elements of $V^n$ are also vertices of both $p^n_-$ and $p^n_+$, which can be considered as points of $\RP^1$ since $p^n_\pm$ are ideal hyperbolic polygons. Therefore $V^n$ can be identified with subsets $V^n_\pm$ of $\RP^1$. We will use this identification below. 

We moreover have a sequence of pairs of convex isometric embeddings $U^n_\pm:p^n_\pm\to AdS^3$, with images the upper and lower boundary components of $CH(v^n(\Z/k_n\Z))$. The boundary maps $u^n_\pm:V^n_\pm\subset \mathbb{RP}^1\rightarrow \partial AdS^3$ realize the identification of $V^n_\pm$ with $V^n$, the set of vertices of the ideal polyhedron $P^n$.

We also denote by $\lambdab^n_\pm$ the measured bending laminations on the future and past boundary components of $P^n$. Note that those laminations are related to $\lambda^n_\pm$, but not in a completely straightforward way -- for instance, $\lambdab^n\pm$ is not the image of $\lambda^n_\pm$ by an isometric embedding of $\HH^2$ into $AdS^3$. A key point of the arguments below is that the measured bending laminations $\lambdab^n_\pm$ are uniformly bounded, as measured laminations on the ideal polygons $p^n_\pm$. 

The following lemma is useful in this respect. We define the {\em width} of an ideal polyhedron in $AdS^3$ just as for convex hulls of acausal meridians, that is, as the supremum of the time distance between the past and future boundary components. 

\begin{lemma} \label{lm:w}
  There exists $w_0<\pi/2$ such that for all $n$, the width of $P_n$ is at most $w_0$.
\end{lemma}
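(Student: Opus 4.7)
The plan is to argue by contradiction using the width classification for acausal curves in $\partial AdS^3$. Assume that, after passing to a subsequence, $w_n := \mathrm{width}(P_n) \to \pi/2$. For each $n$, pick points $x_n^\pm \in \partial_\pm P_n$ realizing the width up to error $1/n$, and choose isometries $\varphi_n \in \mathrm{Isom}_0(AdS^3)$ such that $\varphi_n(x_n^-)$ stays in a fixed compact region of the projective model and the direction of the time-like segment to $\varphi_n(x_n^+)$ is normalized. Since $\overline{AdS^3} \subset \mathbb{RP}^3$ is compact and convexity is preserved under Hausdorff limits, extract a further subsequence so that $\varphi_n(P_n)$ Hausdorff-converges to a convex subset $P_\infty$ of $\overline{AdS^3}$. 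The set $P_\infty$ has time-like width at least $\pi/2$ (realized by the limits of $\varphi_n(x_n^\pm)$), hence exactly $\pi/2$ by the width bound, and by the width-$\pi/2$ classification (the final lemma of Section 2.4) applied either directly or in its ``sequence of isometries'' form, $P_\infty$ must be the convex hull of a rhombus $R$ with four vertices in $\partial AdS^3$.

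Next, translate this degeneration into concentration of the bending laminations. Decompose $\varphi_n = (A_n^L, A_n^R) \in \mathrm{PSL}_2(\mathbb{R}) \times \mathrm{PSL}_2(\mathbb{R})$. The intrinsic isometry type of $\partial_\pm P_n$ is unchanged by $\varphi_n$, but its identification with a fixed copy of $\HH^2$ via the left (resp.\ right) projection is twisted by $A_n^L$ (resp.\ $A_n^R$). The pleated surfaces $\partial_\pm R$ consist of two ideal triangles meeting along a single space-like geodesic (the future/past axis of $R$), so their bending laminations are Dirac masses supported on single geodesics $\gamma_\pm \subset \HH^2$. Consequently the transformed laminations $A_n^L \cdot \lambda^n_+$ and $A_n^R \cdot \lambda^n_-$ converge, in the weak-$*$ topology on measures on $(\RP^1\times\RP^1\setminus\Delta)/\mathbb{Z}_2$, to Dirac measures on $\gamma_+$ and $\gamma_-$ respectively.

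Finally, derive a contradiction with the strong filling hypothesis. Consider the two subcases for each of $A_n^L, A_n^R$: either the sequence stays in a compact subset of $\PSL_2(\mathbb{R})$ or it diverges. In the bounded case, passing to a further subsequence we may assume $A_n^L \to A_\infty^L$, and the continuity of the action implies that $\lambda_+ = A_\infty^L{}^{-1} \cdot \lim A_n^L \cdot \lambda_+^n$ is itself a Dirac measure on a single geodesic, and similarly for $\lambda_-$; but two weighted geodesics cannot strongly fill in the sense of Definition \ref{df:fill} because one can find arbitrarily long geodesic segments disjoint from tubular neighborhoods of both. In the divergent case, the divergence of $A_n^L$ forces the support of $\lambda_+^n$ to concentrate near an ideal point on $\partial_\infty \HH^2$ (the attracting fixed point of the degeneration), and one can combine this with Lemma \ref{lem:reciprocalbounds} applied to the $\RP^1 \times \RP^1$ rectangles cut out by the degenerating arcs to show that, along a subsequence, there exist arbitrarily long geodesic segments in $\HH^2$ with vanishing $(\lambda_+ + \lambda_-)$-intersection, again contradicting strong filling.

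The main obstacle I expect is the precise handling of the divergent case in the last step: when $A_n^L$ or $A_n^R$ escapes to infinity in $\PSL_2(\mathbb{R})$, the concentration of the bending laminations manifests as a collapse towards an ideal boundary point rather than a single geodesic, and one must use both the boundedness of $\lambda_\pm$ and the combinatorial constraint from Lemma \ref{lem:reciprocalbounds} to still produce the long geodesic segment violating Definition \ref{df:fill}. All other steps are direct applications of the compactness properties of $\overline{AdS^3}$ and the classification of width-$\pi/2$ configurations.
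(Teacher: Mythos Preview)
Your setup is exactly the paper's: assume by contradiction that $w(P_n)\to\pi/2$, normalize by isometries, and extract a subsequence converging to a rhombus. After that point, however, you take a detour that is both more complicated and incomplete, while the paper finishes in one stroke.

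The paper's endgame is this. Once $P_n$ Hausdorff-converges to a rhombus, the future and past boundaries each fold along a single axis; hence there are pleating lines $d_+^n\subset\partial_+P_n$ and $d_-^n\subset\partial_-P_n$ such that the $\lambdab^n_\pm$-mass of a fixed-length segment orthogonal to $d_\pm^n$ tends to infinity. Pulled back via $u_n$ to $\RP^1$, the endpoints of $d_+^n$ and $d_-^n$ separate each other: one can pick four vertices $a,b,c,d$ of $P_n$ (hence four points of $\RP^1$) with the endpoints of $d_+^n$ in $(a,b)$ and $(c,d)$ and those of $d_-^n$ in $(b,c)$ and $(d,a)$. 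Then both $\lambda_+^n([a,b]\times[c,d])$ and $\lambda_-^n([b,c]\times[d,a])$ go to infinity, and Lemma~\ref{lem:reciprocalbounds} (which needs only that $\lambda_\pm$ are \emph{bounded}, not that they strongly fill) gives the contradiction immediately.

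Your route instead decomposes $\varphi_n=(A_n^L,A_n^R)$ and case-splits on whether these stay bounded. There are two problems. First, the identification you use --- ``$A_n^L\cdot\lambda_+^n$ converges to a Dirac on $\gamma_+$'' --- conflates the intrinsic bending lamination on $\partial_+P_n$ (unchanged by $\varphi_n$) with its pullback to $\HH^2$ via $u_n$ and with the left/right projections; the action of $A_n^L$ is on the target of $\pi_L$, not on $\lambda_+^n$ itself, so the claimed convergence does not follow as stated. Second, and as you yourself flag, the divergent case is not actually carried out: you invoke Lemma~\ref{lem:reciprocalbounds} only heuristically there. In fact that lemma is the whole argument --- once you see that the rhombus forces the upper and lower concentrated bending lines to link on $\RP^1$, the case split on $A_n^L,A_n^R$ becomes irrelevant, and strong filling is not used at all in this step.
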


\begin{proof}
  We argue by contradiction and assume that the widths of the polyhedra $P_n$ is not bounded away from $\pi/2$. Taking a subsequence, we can then suppose that $w(P_n)\to \pi/2$. There are then two sequences of points $(x_-^n)_{n\in \N}$ and $(x_+^n)_{n\in \N}$, with $x_-^n\in \partial_-P_n, x_+^n\in \partial_+P_n$, and such that the time distance from $x_-^n$ to $x_+^n$ goes to $\pi/2$ as $n\to \infty$.  After applying a sequence of isometries so that $x_-^n\to x_-$ and $x_+^n\to x_+$ in the projective model of $AdS^3$, we see as in \cite[Lemma 8.4]{convexhull} that $(P_n)_{n\in \N}$ converges, in the Hausdorff topology on compact subsets of $AdS^3$, to a rhombus, with upper axis containing $y$ and lower axis containing $x$. 

  In this case, $x_-^n$ and $x_+^n$ can be chosen to be on pleating lines $d_-^n$ and $d_+^n$ of $P_n$. Because $P_n$ converges to a rhombus, the intersection with $\lambda^n_\pm$ of a geodesic segment of fixed length orthogonal to $d^n_\pm$ goes to infinity as $n\to \infty$. 

  We can now apply Lemma \ref{lem:reciprocalbounds}, with $a,b,c,d$ chosen to be four vertices of $P_n$ such that the endpoints of $d^n_+$ are in $(a,b)$ and $(c,d)$, respectively, while the endpoints of $d^n_-$ are $(b,c)$ and $(d,a)$, respectively, see figure \ref{fig:lemma 4.5}. This lemma leads to a contradiction, which shows that the widths of the polyhedra $P_n$ are uniformly bounded away from $\pi/2$.

\definecolor{xdxdff}{rgb}{0.49019607843137253,0.49019607843137253,1}
\definecolor{ttqqqq}{rgb}{0.2,0,0}
\definecolor{qqqqff}{rgb}{0,0,1}
\begin{figure}[!h]
\begin{tikzpicture}[line cap=round,line join=round,>=triangle 45,x=1cm,y=1cm, scale=0.5]
\clip(-13.518102549015099,-9.606661660539363) rectangle (22.939726381577152,11.933364221304464);
\draw [shift={(-0.6120664456333484,-12.611420067468222)},line width=2pt]  plot[domain=1.2076900773290586:1.8924404522798597,variable=\t]({1*22.25386773314934*cos(\t r)+0*22.25386773314934*sin(\t r)},{0*22.25386773314934*cos(\t r)+1*22.25386773314934*sin(\t r)});
\draw [shift={(0.459184531489201,39.05489142685662)},line width=2pt]  plot[domain=4.453050696185044:4.93026514060346,variable=\t]({1*31.610757505913618*cos(\t r)+0*31.610757505913618*sin(\t r)},{0*31.610757505913618*cos(\t r)+1*31.610757505913618*sin(\t r)});
\draw [shift={(-34.46349617887165,-0.02446447903903247)},line width=2pt]  plot[domain=-0.273844492272322:0.30782180563791295,variable=\t]({1*28.13903919524748*cos(\t r)+0*28.13903919524748*sin(\t r)},{0*28.13903919524748*cos(\t r)+1*28.13903919524748*sin(\t r)});
\draw [shift={(32.63292579975639,0.5648293708129372)},line width=2pt]  plot[domain=2.849254115239226:3.444032116158506,variable=\t]({1*26.463656698355372*cos(\t r)+0*26.463656698355372*sin(\t r)},{0*26.463656698355372*cos(\t r)+1*26.463656698355372*sin(\t r)});
\draw [shift={(-0.4385501511652312,12.869019450215317)},line width=2pt]  plot[domain=4.38625549993096:5.081500630402397,variable=\t]({1*21.644158421451422*cos(\t r)+0*21.644158421451422*sin(\t r)},{0*21.644158421451422*cos(\t r)+1*21.644158421451422*sin(\t r)});
\draw [line width=2.8pt,color=qqqqff] (-6.55132273164352,3.541499665975146)-- (6.692861451633188,5.8029767020395635);
\draw [line width=0.8pt,color=qqqqff] (6.692861451633188,5.8029767020395635)-- (5.209258796191471,-4.088142632959539);
\draw [line width=0.8pt,dash pattern=on 1pt off 1pt,color=qqqqff] (-6.55132273164352,3.541499665975146)-- (5.209258796191471,-4.088142632959539);
\draw [line width=2.8pt,color=qqqqff] (5.209258796191471,-4.088142632959539)-- (-5.934589304663387,-3.5540924044840727);
\draw [line width=0.8pt,color=qqqqff] (-5.934589304663387,-3.5540924044840727)-- (6.692861451633188,5.8029767020395635);
\draw [line width=0.8pt,color=qqqqff] (-6.55132273164352,3.541499665975146)-- (-5.934589304663387,-3.5540924044840727);
\draw [color=ttqqqq](-1.0569305512540763,6) node[anchor=north west] {$d_+^n$};
\draw (-2.5166678424203672,-3.7) node[anchor=north west] {$d_-^n$};
\draw (-9.566130858296603,-5.654689969820909) node[anchor=north west] {$\partial AdS^3$};
\draw [line width=2pt] (14.750956211619908,0.8607228175797859) circle (6.045863734809703cm);
\draw [line width=2.8pt,color=qqqqff] (12.221970590605281,6.352234451782921)-- (16.566443694764956,-4.906119775939719);
\draw [line width=2.8pt,color=qqqqff] (9.162721063801877,-1.446677501519247)-- (17.16679521446377,6.40294170645683);
\draw (11.6,5) node[anchor=north west] {$d_-^n$};
\draw (15.391816485790473,5) node[anchor=north west] {$d_+^n$};
\draw (18.34689441668809,-3.4116790102239483) node[anchor=north west] {$\mathbb{RP}^1$};
\begin{scriptsize}
\draw [fill=xdxdff] (14.931889019002421,6.903878584155707) circle (2.5pt);
\draw[color=xdxdff] (15.213799742965318,7.678764067783245) node {$a$};
\draw [fill=xdxdff] (20.35456279111948,3.1305381409213613) circle (2.5pt);
\draw[color=xdxdff] (20.625508724850103,3.9048091198899466) node {$b$};
\draw [fill=xdxdff] (12.78556822825861,-4.856769497653015) circle (2.5pt);
\draw[color=xdxdff] (13.077598829063428,-4.105944307242055) node {$c$};
\draw [fill=xdxdff] (9.357638737958542,3.5928639062108436) circle (2.5pt);
\draw[color=xdxdff] (9.0900237897799,4.296445954105289) node {$d$};
\end{scriptsize}
\end{tikzpicture}
\label{fig:lemma 4.5}
\caption{Lemma \ref{lm:w}}
\end{figure}

\end{proof}

\begin{lemma} \label{lm:bounded}
  There exists $k_0>0$ such that for all $n$ and all geodesic segments $\gamma$ of length $1$ in $p^n_\pm$, the intersection of $\gamma$ with $\lambdab^n_\pm$ is at most $k_0$.
\end{lemma}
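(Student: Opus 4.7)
The plan is to argue by contradiction, using the uniform width bound of Lemma \ref{lm:w} together with the standard identification of the bending measure across a spacelike geodesic segment of a convex pleated surface with the dihedral angle between two adjacent support planes of the underlying convex set.

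Suppose the lemma fails; then, up to exchanging the roles of $+$ and $-$, there exist sequences $n_k \to \infty$ and unit-length spacelike geodesic segments $\gamma_k \subset p^{n_k}_+$ with
\[ \theta_k := i(\gamma_k, \lambdab^{n_k}_+) \to +\infty. \]
Since $\mathrm{Isom}_0(AdS^3)$ acts transitively on unit spacelike tangent vectors, I can apply isometries $\varphi_k$ so that the segments $\delta_k := \varphi_k(U^{n_k}_+(\gamma_k))$ all share a common midpoint $x_0 \in AdS^3$ and a common unit tangent vector; by Lemma \ref{lm:w} the normalized polyhedra $\wti P_k := \varphi_k(P^{n_k})$ all have width at most $w_0 < \pi/2$. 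The next step is to use the bending cocycle: for each $k$, pick $a_k, b_k \in \delta_k$ near its two endpoints such that the bending of $\delta_k$ between $a_k$ and $b_k$ is at least $\theta_k/2$. Choosing spacelike support planes $\Pi^-_k$ and $\Pi^+_k$ of $\wti P_k$ on the two sides of this bent region (at $a_k$ and $b_k$, respectively), one obtains a pair of support planes meeting along a spacelike geodesic $\ell_k$ passing near $x_0$, whose dihedral angle equals the total bending across the sub-segment and is therefore at least $\theta_k/2 \to +\infty$. After a further subsequence, $\ell_k$ converges in the projective model of $AdS^3$ to a spacelike line in a compact neighborhood of $x_0$.

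The crux is the geometric analysis of the limit. A sequence of pairs of spacelike planes in $AdS^3$ intersecting in spacelike lines that stay in a compact region, but whose dihedral angles diverge, must converge in the Grassmannian of planes of the projective model to a pair of lightlike planes meeting along a common lightlike line. It follows that the Hausdorff limit $\wti P_\infty$ of $\wti P_k$ (along a further subsequence) contains this null line in its boundary. The presence of a null segment in $\partial \wti P_\infty$ forces the time distance between upper and lower boundary components to reach $\pi/2$; by the rhombus dichotomy of Section \ref{subsec:rhombus}, either $\wti P_\infty$ is itself a rhombus or a further sequence of isometries of $AdS^3$ brings $\wti P_k$ into Hausdorff convergence to a rhombus. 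In both cases the widths $w(\wti P_k) = w(P^{n_k})$ converge to $\pi/2$, contradicting the uniform bound $w_0 < \pi/2$.

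The main obstacle will be rigorously justifying that diverging dihedral angles between spacelike planes with bounded intersection lines force the lightlike limit described above, and that this limit configuration indeed produces a rhombus (either exact or limiting) in the Hausdorff convergence of $\wti P_k$; this is essentially a quantitative reformulation of the Rhombus lemma quoted in Section \ref{ssc:AdS}. Once this analytic point is settled, the argument is insensitive to whether the bending along $\gamma_k$ is diffuse or atomic, since only the additivity of the bending cocycle across $\delta_k$ is used, and the proof for $\lambdab^{n}_-$ is the same by symmetry.
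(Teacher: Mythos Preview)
Your approach is essentially the same as the paper's: argue by contradiction, show that unbounded bending forces a lightlike degeneration of the pleated surface, hence the $P_n$ limit to a rhombus, contradicting Lemma~\ref{lm:w}.

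The one substantive difference is how the lightlike degeneration is extracted. You work with two support planes $\Pi_k^\pm$ at separated points $a_k,b_k$ and try to control their intersection line $\ell_k$; as you note, justifying that $\ell_k$ stays spacelike and near $x_0$ while the dihedral angle diverges is the delicate point, and indeed for support planes that are not adjacent along the pleating this is not automatic. The paper sidesteps this obstacle by first \emph{localizing}: from $i(\gamma^n,\lambdab^n_+)\to\infty$ on a segment of length~$1$ one extracts points $x^n$ and radii $\varepsilon^n\to 0$ such that the bending on the sub-segment of length $2\varepsilon^n$ centered at $x^n$ still diverges. After normalizing so that $x^n$ is fixed, the pleated surface $p^n_+$ then converges in the Hausdorff sense to two lightlike half-planes meeting along a line through $x^n$, and the convex hull of that configuration is a rhombus. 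This localization makes the passage to the lightlike limit immediate and avoids any analysis of how non-adjacent support planes intersect; you may find it the cleanest way to fill the gap you flagged.
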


\begin{proof}
  We again argue by contradiction and suppose that there is a sequence of geodesic segments $\gamma^n$ in $p^n_+$ such that $i(\gamma^n, \lambdab^n_+)\to \infty$. There is then a sequence of points $x^n\in \gamma^n$ and a sequence $\varepsilon^n\to 0$ such that the intersection of $\lambdab^n_+$ with the segment of $\gamma^n$ of length $2\varepsilon^n$ centered at $x_n$ goes to $\infty$.

  This simplies that, after taking a subsequence and normalization by a sequence of isometries, $p^n_+$ converges (in the Hausdorff topology on compact subsets of $AdS^3$) to the union between two lightlike half-planes $q$ and $q'$ intersecting along a line containing $x^n_+$.

  A direct convexity argument then shows that $P^n$ converges to the convex hull of $q\cup q'$, which is a rhombus. As a consequence, $w(P_n)\to \pi/2$, contradicting Lemma \ref{lm:w}. 
\end{proof}

As a consequence we can now see that the maps $u^n_\pm:V^n_\pm\to \partial AdS^3$ are in a natural way ``parameterized discrete quasi-circles''. Recall that $\pi_L,\pi_R:\partial AdS^3\to \RP^1$ denote the left and right projections on the boundary of $AdS^3$.

\begin{lemma} \label{lm:qsym}
  The maps $\pi_L\circ u^n_\pm:V^n_\pm\to \RP^1$ and $\pi_R\circ u^n_\pm:V^n_\pm \to\RP^1$ are $(K,\varepsilon)$-quasi-symmetric, for fixed $K>1$ and $\varepsilon>0$. 
\end{lemma}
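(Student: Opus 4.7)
My plan is to identify $\pi_L \circ u^n_\pm$ and $\pi_R \circ u^n_\pm$ with the boundary values of left or right earthquakes along a uniformly bounded measured lamination, and then apply Thurston's quasi-symmetric extension theorem together with Lemma \ref{lm:restriction}.

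The key ingredient is Mess's observation, recalled in Section \ref{ssc:AdS}: if $\Sigma$ is a pleated surface in $AdS^3$ with bending lamination $\lambda$, then the restriction of the left projection $\pi_L$ to $\Sigma$ realizes a left earthquake along $\lambda$ from $\Sigma$ (with its induced hyperbolic metric) to a fixed totally geodesic plane $\Pi_0$, and similarly $\pi_R$ realizes a right earthquake. Applying this to $\Sigma = U^n_+(p^n_+) = \partial_+ P^n$, the composition $\pi_L \circ U^n_+ : p^n_+ \to \Pi_0$ is a left earthquake along $\lambdab^n_+$. Extending $\lambdab^n_+$ trivially across the ideal ``half-plane'' regions of $\HH^2 \setminus p^n_+$ yields a measured lamination $\tilde{\lambdab}^n_+$ on all of $\HH^2$, with associated earthquake $E^l_{\tilde{\lambdab}^n_+}$ agreeing with $\pi_L \circ U^n_+$ on $p^n_+$. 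By Lemma \ref{lm:bounded}, $\tilde{\lambdab}^n_+$ is bounded uniformly in $n$: the uniform bound $k_0$ on the intersection with unit-length geodesic segments transfers to the trivial extension.

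Next, the quantitative version of Thurston's theorem (recalled in the introduction) gives that bounded laminations produce quasi-symmetric earthquake extensions, with quasi-symmetric constant controlled by the bound on the lamination. Hence there exists $K_0 > 0$, independent of $n$, such that the boundary extension of $E^l_{\tilde{\lambdab}^n_+}$ to $\partial_\infty \HH^2 = \RP^1$ is $K_0$-quasi-symmetric. Since $V^n_+ \subset \partial_\infty \HH^2$, and this boundary extension restricted to $V^n_+$ coincides, after the standard identification of $\partial \Pi_0$ with $\RP^1$, with $\pi_L \circ u^n_+$, we may apply Lemma \ref{lm:restriction} to conclude that $\pi_L \circ u^n_+$ is $(K,\varepsilon)$-quasi-symmetric for some uniform $K = K(K_0,\varepsilon)$. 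The same reasoning with right earthquakes yields the statement for $\pi_R \circ u^n_+$, and the analogous argument applied to $\partial_- P^n$ gives it for $u^n_-$.

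The main obstacle is justifying the uniform control in Thurston's theorem, namely that a bound on the measured lamination should yield a bound on the quasi-symmetric constant of the earthquake extension. This is classical, but if a convenient quantitative reference is unavailable, one can instead argue by contradiction and compactness: if the $\pi_L \circ u^n_+$ were not uniformly $(K,\varepsilon)$-quasi-symmetric, then after renormalization by isometries of $AdS^3$ a subsequence of the polyhedra $P^n$ would have to degenerate in such a way that the bending measure blows up on a bounded geodesic segment, which by the argument of Lemma \ref{lm:bounded} would force the width of $P^n$ to approach $\pi/2$, contradicting Lemma \ref{lm:w}.
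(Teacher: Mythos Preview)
Your proof is correct and follows essentially the same route as the paper: identify $\pi_L\circ u^n_\pm$ (and $\pi_R\circ u^n_\pm$) with the restriction to $V^n_\pm$ of the boundary map of an earthquake along $\lambdab^n_\pm$, invoke Lemma~\ref{lm:bounded} for the uniform bound on the lamination, use Thurston's result that bounded laminations give uniformly quasi-symmetric earthquake maps, and conclude via Lemma~\ref{lm:restriction}. Your added details (the trivial extension to $\HH^2$, and the alternative compactness/contradiction argument) are reasonable elaborations but not needed beyond what the paper sketches.
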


\begin{proof}
  This follows from Lemma \ref{lm:restriction} since $\pi_L\circ u^n_\pm$ is the restriction to $V^n_\pm$ of the earthquake along $\lambdab^n_\pm$, which is bounded, and the boundary map of the earthquake along a (uniformly) bounded measured lamination is (uniformly) quasi-symmetric. 
\end{proof}

\subsection{Convergence of bending laminations}

We are now equipped to prove Theorem \ref{tm:bending}

Lemma \ref{lm:qsym} indicates that the maps $\pi_L\circ u^n_\pm$ are $(K,\varepsilon)$-quasi-symmetric, for some $K>1$ and $\varepsilon>0$. It then follows from Lemma \ref{lem:precompactness} that after extracting a subsequence, $\pi_L\circ u^n_\pm\to u'_{L,\pm}$, where the maps $u'_{L,\pm}:\RP^1\to \RP^1$ are $K$-quasi-symmetric. The same argument shows that after again extracting a subsequence, $\pi_R\circ u^n_{R,\pm}\to u'_{R,\pm}$, where again the maps $u'_{R,\pm}$ are quasi-symmetric. It follows that $u^n_\pm\to u_\pm$, where the maps $u_\pm:\RP^1\to \RP^1\times \RP^1=\partial AdS^3$ are parameterized quasicircles. Moreover, for all $n$, $u^n_+(V^n)=u^n_-(V^n)$, and therefore $u_+$ and $u_-$ have the same image and are therefore two parameterization of the same quasicircle.

For each $n$, the measured laminations $\lambdab^n_\pm$ are the measured bending laminations on the past and future boundary of $P_n$. As $n\to \infty$, after possibly extracting a subsequence, $P_n\to CH(u^+(\RP^1))$, in the Hausdorff topology on subsets of $AdS^3$. Then $\lambdab^n_\pm\to \lambdab_\pm$, where the $\lambdab_\pm$ are the measured bending laminations on the past and future boundary components of $CH(u_+(\mathbb{RP}^1))$.

But $\pi_L\circ u^n_\pm\to \pi_L\circ u_\pm$ as quasi-symmetric homeomorphisms, and $\lambda^n_\pm\to \lambda_\pm$, and it follows that
$$ (\pi_L\circ u^n_\pm)_*(\lambda^n_\pm) \to (\pi_L\circ u_\pm)_*(\lambda_\pm)~. $$
The same holds with$\pi_L$ replaced by $\pi_R$, and therefore 
$$ (u^n_\pm)_*(\lambda^n_\pm) \to (u_\pm)_*(\lambda_\pm)~. $$

However for $n\in \N$, $(u^n_\pm)_*(\lambda^n_\pm)=\lambdab^n_\pm$ by construction of $P_n$, and therefore
$$ (u^n_\pm)_*(\lambda^n_\pm)=\lambdab^n_\pm \to \lambdab_\pm~. $$

Comparing the last two equalities, we obtain that $\lambdab_\pm=(u_\pm)_*(\lambda_\pm)$, which concludes the proof of Theorem \ref{tm:bending}.


\section{Earthquakes} \label{sc:earthquakes} 

In this section we indicate how to prove Theorem \ref{tm:earthquakes} from Theorem \ref{tm:bending}.

The main tool in the proof is the following relation discovered by G. Mess between earthquakes and quasicircles in the asymptotic boundary of $AdS^3$, see \cite[Proposition 22]{mess}, \cite{mess-notes}. 

\begin{prop} \label{pr:mess22}
  Let $\phi:\RP^1\to \RP^1$ be an orientation-preserving homeomophism, and let $\lambda_+$ (resp. $\lambda_-$) be the measured bending lamination on the future (resp. past) boundary component of the convex hull of the graph of $\phi$ in $\RP^1\times \RP^1$, identified with $\partial AdS^3$. Then the left earthquake along $2\lambda_+$ (resp. the right earthquake along $2\lambda_-$) extends continuously to $\RP^1$, identified with $\partial_\infty \HH^2$, and its restriction to $\RP^1$ equals $\phi$ (up to pre- and post-composition by a Möbius transformation).
\end{prop}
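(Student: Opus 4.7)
The plan is to follow Mess's original argument \cite[Prop.~22]{mess}. Let $C$ be the graph of $\phi$ in $\partial AdS^3 \cong \RP^1\times \RP^1$, and let $\Sigma_+ = \partial_+ CH(C)$. Mess showed that $\Sigma_+$ with its induced metric is isometric to $\HH^2$, pleated along $\lambda_+$, and similarly for $\Sigma_- = \partial_- CH(C)$ and $\lambda_-$. Fix a totally geodesic spacelike reference plane $\Pi_0 \subset AdS^3$ and recall the left and right projections $\pi_L, \pi_R: \partial AdS^3 \to \partial\Pi_0 \cong \RP^1$. Parametrizing $C$ by its left coordinate $x \mapsto (x,\phi(x))$, one has $\pi_L(x,\phi(x)) = x$ and $\pi_R(x,\phi(x)) = \phi(x)$, so $\phi$ is realized as the boundary extension of $\pi_R \circ \pi_L^{-1}$.

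The central input is the observation recalled in Section~\ref{ssc:AdS}: the restriction of $\pi_L$ (resp.\ $\pi_R$) to $\Sigma_+$, extended along each pleating line via the tangent plane identification $\pi_{L,\Pi_x}$ (resp.\ $\pi_{R,\Pi_x}$), yields a left (resp.\ right) earthquake from $\Sigma_+ \simeq \HH^2$ to $\Pi_0$ along the bending lamination of $\Sigma_+$. I would first establish this precisely in the polyhedral case, where $\Sigma_+$ is a finite union of ideal hyperbolic faces meeting along pleating lines $g_1, \dots, g_n$ carrying masses $\theta_1,\dots,\theta_n$. Working in the Lie group model $AdS^3 = \PSL_2(\RR)$, the left and right projections correspond to quotients by the right- and left-translation actions on $\PSL_2(\RR)$ respectively; a direct computation along a single bending line shows that the discontinuity of $\pi_L$ (resp.\ $\pi_R$) across a leaf of mass $\theta_i$ is a hyperbolic translation along the image of $g_i$ of length exactly $\theta_i$, but in opposite directions for the two projections.

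The factor $2$ now emerges: since $(\pi_L|_{\Sigma_+})^{-1}$ is a right earthquake along $\lambda_+$ whose jump across each $g_i$ is in the same direction as that of $\pi_R|_{\Sigma_+}$, the composition $\pi_R \circ \pi_L^{-1}$ is a right earthquake whose shift across each pleating line is $2\theta_i$. Assembling these local contributions, the boundary extension of $\pi_R \circ \pi_L^{-1}$ coincides with the boundary extension of the right earthquake along $2\lambda_+$, and so equals $\phi$ up to pre- and post-composition by Möbius transformations accounting for the choice of $\Pi_0$ and the isometric identification $\Sigma_+ \simeq \HH^2$. The symmetric statement for the past boundary and $\lambda_-$ follows by exchanging past with future, which swaps the roles of left and right earthquakes.

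The passage from the polyhedral case to general $\lambda_\pm$ is carried out by weak-$*$ approximation (for instance using Theorem~\ref{thm:approx} when $\lambda_\pm$ strongly fill, or by a more standard approximation by rational polyhedral laminations otherwise), relying on the continuous dependence of both the bending lamination of a convex hull and the boundary extension of a left earthquake on a bounded measured lamination. The main obstacle is the bookkeeping in the polyhedral step: one must correctly align the Lie group identification of $AdS^3$, the left/right action conventions, the orientation of the pleating lines, and the identification of the tangent plane $\Pi_x$ with $\Pi_0$, so as to verify that the hyperbolic translation across each bending leaf has length exactly $\theta_i$ (and not some other multiple). Once this single-leaf normalization is nailed down by an explicit calculation, the factor $2$ in the statement is forced by the combination of the left and right contributions.
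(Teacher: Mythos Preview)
The paper does not give a proof of this proposition: it is quoted from Mess \cite[Proposition~22]{mess} (see the sentence introducing it in Section~\ref{sc:earthquakes}) and used as a black box. So there is no proof in the paper to compare against. Your outline is essentially Mess's original argument --- identify $\phi$ with $\pi_R\circ\pi_L^{-1}$ on $\partial\Sigma_+$, compute the discontinuity of each projection across a single pleating leaf in the Lie-group model, observe that the two shears add, and pass to the limit --- and is the standard route.

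Two bookkeeping remarks. First, with the conventions you set up ($\pi_L|_{\Sigma_+}$ a left earthquake and $\pi_R|_{\Sigma_+}$ a right earthquake along $\lambda_+$, which is consistent with Proposition~\ref{pr:mess23}), your own computation produces $\phi$ as the boundary value of the \emph{right} earthquake along $2\lambda_+$, whereas the proposition as stated says \emph{left}. This left/right swap is endemic to this circle of statements and does not reflect a mathematical gap, but you should track it down and make your conventions internally consistent before writing a final version. Second, the sentence ``$(\pi_L|_{\Sigma_+})^{-1}$ is a right earthquake along $\lambda_+$'' is not quite right: the inverse of a left earthquake along $\lambda_+$ is a right earthquake along the pushforward $(\pi_L)_*\lambda_+$, not along $\lambda_+$ itself. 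Your single-leaf calculation is unaffected by this, but the phrasing should be corrected. Invoking Theorem~\ref{thm:approx} for the approximation step is circular in spirit (that theorem assumes strong filling and is proved later for a different purpose); a direct rational approximation of the bending lamination of $\Sigma_+$, as in Mess, is what is needed here.
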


We will also use \cite[Proposition 23]{mess}, which we can reformulate using our notations as follows.




\begin{prop} \label{pr:mess23}
  With the notations above,
  $$ u_l = \cE^l(\lambda_+)(u_+) = \cE^r(\lambda_-)(u_-)~, $$
  $$ u_r = \cE^r(\lambda_+)(u_+) = \cE^l(\lambda_-)(u_-)~. $$
\end{prop}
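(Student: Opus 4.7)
The plan is to deduce this from the description, recalled in the background section on left and right projections in $AdS^3$, of $\pi_L \circ U$ and $\pi_R \circ U$ as left and right earthquakes whenever $U$ is an isometric embedding parameterizing a pleated spacelike surface.

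First, I would fix a totally geodesic spacelike plane $\Pi_0 \subset AdS^3$ so that the projections $\pi_L, \pi_R: \partial AdS^3 \setminus \partial \Pi_0 \to \partial \Pi_0 \simeq \RP^1$ are well defined. Let $\Sigma_+ := \partial_+ CH(\mathrm{graph}(\phi))$ and $\Sigma_- := \partial_- CH(\mathrm{graph}(\phi))$ be the future and past boundary components of the convex hull, and let $U_\pm: \HH^2 \to \Sigma_\pm$ be isometric parameterizations under which the measured bending laminations on $\Sigma_\pm$ pull back to $\lambda_\pm$. Their continuous boundary extensions $u_\pm: \RP^1 \to C = \mathrm{graph}(\phi)$ give two parameterizations of the quasicircle.

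Second, I would apply to $U_\pm$ the identification recalled at the end of the subsection on left and right projections: up to post-composition by a Möbius transformation corresponding to the choice of $\Pi_0$,
\begin{equation*}
\pi_L \circ U_+ = E^l_{\lambda_+}, \qquad \pi_R \circ U_+ = E^r_{\lambda_+}, \qquad \pi_L \circ U_- = E^r_{\lambda_-}, \qquad \pi_R \circ U_- = E^l_{\lambda_-}.
\end{equation*}
The left/right swap for the lower boundary reflects the fact that $\Sigma_-$ carries the reversed orientation relative to the ambient time direction, which exchanges the roles of the two rulings of $\partial AdS^3$ when one builds the earthquake via projections.

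Third, I would pass to boundary values at $\RP^1$. Setting $u_l := \pi_L \circ u_+$ and $u_r := \pi_R \circ u_+$, the boundary of the first two identities above, combined with the definition $\cE^l(\lambda)(u) = E^l(u_*\lambda)\circ u$, yields $u_l = \cE^l(\lambda_+)(u_+)$ and $u_r = \cE^r(\lambda_+)(u_+)$. Since $u_+$ and $u_-$ parameterize the same quasicircle $C$, the restrictions of $\pi_L$ and $\pi_R$ to $C$ applied to $u_-$ give back $u_l$ and $u_r$, and the last two boundary identities then read $u_l = \cE^r(\lambda_-)(u_-)$ and $u_r = \cE^l(\lambda_-)(u_-)$, as claimed.

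The main obstacle here is purely one of bookkeeping: tracking the sign/time-orientation conventions between the past and future boundaries so as to get the left/right swap correct, and keeping straight the passage from the interior identifications on $\HH^2$ to the boundary identifications on $\RP^1$. No further analytic input is required, since the geometric dictionary between earthquakes and the projections $\pi_L, \pi_R$ is already available from the background section.
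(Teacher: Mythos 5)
The paper itself does not supply a proof of this statement: it is presented as a reformulation, in the paper's notation, of \cite[Proposition~23]{mess}, and the paper simply cites Mess. Your sketch therefore supplies an argument that the paper takes for granted, and it is the expected one: read $\pi_L\circ U_\pm$ and $\pi_R\circ U_\pm$ as earthquakes via the support-plane construction recalled in the background, then pass to boundary values. The genuinely substantive point is the $l\leftrightarrow r$ swap on $\Sigma_-$, and you flag it correctly, but the justification is worth stating more carefully. The two rulings of $\partial AdS^3$ are intrinsic to the ambient geometry and do not change when one passes from $\Sigma_+$ to $\Sigma_-$; what changes is the sign of the dihedral bending (equivalently, the time-orientation of the exterior normal: future-convex versus past-convex). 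That sign is what determines whether the jump in $\pi_L$ across a pleating line is a left or a right shear, and it is opposite on the two boundary components. A useful sanity check, which your sketch omits, is to recompose the boundary identities: on $\Sigma_+$ they give $\phi = u_l\circ u_r^{-1}$ as (the boundary value of) a left earthquake along $2\lambda_+$, and on $\Sigma_-$ as a right earthquake along $2\lambda_-$, recovering Proposition~\ref{pr:mess22} and confirming the assignments. One further caveat: when you say that restricting $\pi_L$, $\pi_R$ to $C$ and applying them to $u_-$ ``give back'' $u_l$, $u_r$, note that $\pi_L\circ u_+$ and $\pi_L\circ u_-$ differ by pre-composition with $u_+^{-1}\circ u_-$, which is not Möbius; the equalities in the proposition are to be read modulo this reparameterization, as is implicit in the definition of $\cE^l$ (the pushforward $u_*\lambda$ absorbs the change of marking). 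The paper is itself somewhat loose about whether $u_\pm$, $u_l$, $u_r$ denote maps to $\partial AdS^3$ or elements of $\cQS$, so this is not a defect of your argument alone, but it is where the bookkeeping you rightly identify as the obstacle actually lives.
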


\begin{proof}[Proof of Theorem \ref{tm:earthquakes}]
  Let $\lambda_l, \lambda_r\in \cML$ two bounded measured laminations that strongly fill. Let $\lambda_+=\lambda_l/2$, $\lambda_-=\lambda_r/2$. By Theorem \ref{tm:bending}, there exists a parameterized quasicircle $u:\RP^1\to \partial AdS^3$ such that $u_*(\lambda_\pm)$ are the measured bending laminations on the past and future boundary components of the convex hull of $u(\RP^1)$.

  Let $u_\pm:\RP^1\to \partial AdS^3$ be the parameterization of $u(\RP^1)$ obtained as boundary values of isometries from $\HH^2$ to $\partial_\pm CH(u(\RP^1)$, and let $u_l=\pi_l\circ u$, $u_r = \pi_r \circ u$.

  By Proposition \ref{pr:mess23}, we have 
  $$ u_l = \cE^l(\lambda_+)(u_+) = \cE^r(\lambda_-)(u_-)~, $$
  $$ u_r = \cE^r(\lambda_+)(u_+) = \cE^l(\lambda_-)(u_-)~. $$
  As a consequence,
  $$ u_+=\cE^l(\lambda_+)(u_r)~, ~~ u_- = \cE^r(\lambda_-)(u_-)~, $$
  and therefore
  $$ u_l = \cE^l(\lambda_+)(\cE^l(\lambda_+)(u_r) = \cE^l(2\lambda_+)(u_r) = \cE^l(\lambda_l)(u_r)~, $$
  $$ u_l = \cE^r(\lambda_-)(\cE^r(\lambda_-)(u_r) = \cE^r(2\lambda_-)(u_r) = \cE^r(\lambda_r)(u_r)~, $$
  which is precisely the statement of the theorem.
\end{proof} 

\section{Examples}
\label{sc:examples}

\subsection{Necessary conditions on pairs laminations} \label{ssc:necessary}

As mentioned in the introduction, the condition in Theorem \ref{tm:bending} that $\lambda_+$ and $\lambda_-$ strongly fill is certainly not necessary. Whenever $\Gamma\subset \partial AdS^3$ is a smooth, uniformly space-like curve, the boundary of its convex hull is asymptotically ``flat'' near $\Gamma$, so that the future and past bending laminations cannot strongly fill in the sense of Definition \ref{df:fill}.

However, it is also clear that a much weaker filling condition must always be satisfied.

\begin{remark} \label{rk:weak}
Let $\Gamma$ be a closed acausal meridian in $\partial AdS^3$, which is not the boundary of a plane. Let $\lambda_+$ and $\lambda_-$ be the measured bending laminations on the future and past boundary components of $CH(\Gamma)$, considered as measures on $\Gamma\times \Gamma\setminus \Delta$, where $\Delta\subset \Gamma\times \Gamma$ is the diagonal. Then for any complete geodesic $\gamma\subset \HH^2$, $i(\lambda_+,\gamma)+i(\lambda_-,\gamma)>0$.   
\end{remark}

Another perhaps clearer way to state this remark, in terms of measures, is that if $a,b\in \Lambda$ are distinct, then $\lambda_+((a,b)\times (b,a))+\lambda_-((a,b)\times (b,a))>0$.

\begin{proof}
Suppose that this is not the case. Then $\gamma$ would be realized as a geodesic of $AdS^3$ (without bending) in both the future and past boundary components of $CH(\Gamma)$. Therefore those two boundary components $\partial_+CH(\Gamma)$ and $\partial_-CH(\Gamma)$ would intersect along a complete geodesic, that we can still denote $\gamma$. But then if $P_-$ is a support plane of $CH(\Gamma)$ along $\gamma$ in the past, and $P_+$ is a support plane of $CH(\Gamma)$ along $\gamma$ in the future, then $CH(\Gamma)$ is in the future of $P_-$ and in the past of $P_+$, while both $P_-$ and $P_+$ contain $\gamma$. This can only happen if $P_+=\partial_+CH(\Gamma)=\partial_-CH(\Gamma)=P_-$, that is, if $\Gamma$ bounds a plane.
\end{proof}

\subsection{An example of a pair that cannot be realized} \label{ssc:example}

Start by considering a complete, space-like, future-convex surface $\Sigma_-\subset AdS^3$ which is the union of two totally geodesic half-planes meeting along their common boundary. Let $C=CH(\Sigma_-)$ be its convex core. By construction, $\Sigma_-$ is the past boundary component of $C$. A simple symmetry argument shows that the upper boundary component of $C$, $\partial_+C$, is bent along a measured foliation which is invariant under a one-parameter subgroup of translations --- each leaf of the foliation is orthogonal to the common axis of the translations. We denote by $\lambda_+, \lambda_-$ the measured bending laminations on the future and past boundary components of $C$, so that the support of $\lambda_-$ contains only one line, while the support of $\lambda_+$ is the whole hyperbolic plane, figure see \ref{fig:weakfilling}.

\definecolor{qqqqff}{rgb}{0,0,1}
\definecolor{ccqqqq}{rgb}{0.8,0,0}
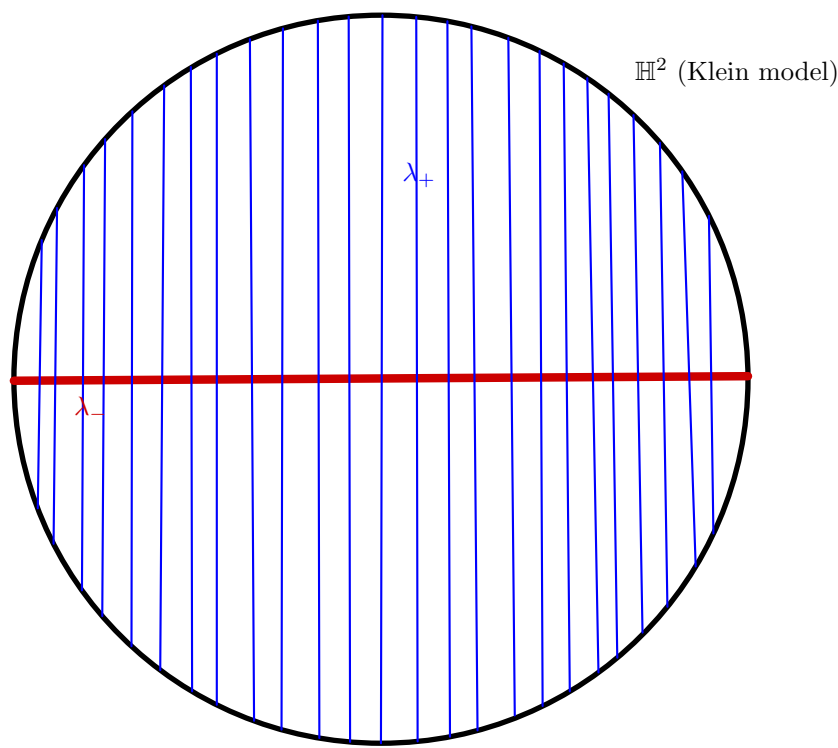
\begin{figure}[!h]
\begin{tikzpicture}[line cap=round,line join=round,>=triangle 45,x=1cm,y=1cm]
\clip(-4.3,-5.8) rectangle (16.18,6.3);
\draw [line width=2pt] (4.58,0.36) circle (4.827131653477041cm);
\draw [line width=3.2pt,color=ccqqqq] (-0.24708975183814275,0.33988712603400845)-- (9.406965441382319,0.4000578044927994);
\draw [line width=0.8pt,color=qqqqff] (0.675254250818278,3.197985312549602)-- (0.6479800369149946,-2.4400748221968978);
\draw [line width=0.8pt,color=qqqqff] (1.3108847497528004,3.911631382984611)-- (1.2992114301301103,-3.1808510784571378);
\draw [line width=0.8pt,color=qqqqff] (2.08,4.5)-- (2.0964612533962055,-3.7792312443396594);
\draw [line width=0.8pt,color=qqqqff] (2.8565646757213536,4.868987767008084)-- (2.9112347196278296,-4.169505761010177);
\draw [line width=0.8pt,color=qqqqff] (3.750351808702698,5.115300608655263)-- (3.7700104556029403,-4.3986885733327235);
\draw [line width=0.8pt,color=qqqqff] (4.600112873965992,5.187089751838143)-- (4.58,-4.467131653477041);
\draw [line width=0.8pt,color=qqqqff] (5.4539893275088005,5.107351119877348)-- (5.487849611461375,-4.380992415409397);
\draw [line width=0.8pt,color=qqqqff] (6.255232973878709,4.887117679410321)-- (6.34,-4.1);
\draw [line width=0.8pt,color=qqqqff] (6.983551030776951,4.546184711936521)-- (7.063538746603795,-3.7792312443396594);
\draw [line width=0.8pt,color=qqqqff] (7.573675116882556,4.146701637911048)-- (7.688084872981961,-3.3333735828292133);
\draw [line width=0.8pt,color=qqqqff] (8.249205801804088,3.4965791531551074)-- (8.347379666780826,-2.657954679302919);
\draw [line width=0.8pt,color=qqqqff] (8.893497791045313,2.526780285734389)-- (8.952213593314664,-1.6857146170554853);
\draw [line width=0.8pt,color=qqqqff] (0.11940212932012617,2.2050654828721297)-- (0.06687426685145326,-1.3525700326679746);
\draw [line width=0.8pt,color=qqqqff] (3.2911640410682095,5.011892289269429)-- (3.2724929357655617,-4.28667895135623);
\draw [line width=0.8pt,color=qqqqff] (4.155725006173748,5.168449930030855)-- (4.17408987108418,-4.4500350276524525);
\draw [line width=0.8pt,color=qqqqff] (5.0443243817775745,5.1647479505679454)-- (5.06230722679509,-4.442976133501104);
\draw [line width=0.8pt,color=qqqqff] (5.850102358079852,-4.2970419796261226)-- (5.7697203685946175,5.0382224663381585);
\draw [line width=0.8pt,color=qqqqff] (6.666240874833221,4.713021825373164)-- (6.706297392047093,-3.973596589505503);
\draw [line width=0.8pt,color=qqqqff] (7.4407003416180215,-3.5281349713540715)-- (7.293806588574286,4.3520488223230425);
\draw [line width=0.8pt,color=qqqqff] (7.901508660835276,3.8626818605172004)-- (8.02,-3);
\draw [line width=0.8pt,color=qqqqff] (8.543027466004697,3.116014024581947)-- (8.71923124433966,-2.1235387466037956);
\draw [line width=0.8pt,color=qqqqff] (2.4212410972968756,4.677517805406249)-- (2.4212410972968756,-3.957517805406249);
\draw [line width=0.8pt,color=qqqqff] (1.728892654875021,4.255174823621449)-- (1.6772293948441033,-3.496828076780563);
\draw [line width=0.8pt,color=qqqqff] (0.9522117042810803,3.544391948464386)-- (0.9149684216110452,-2.781455638619103);
\draw [line width=0.8pt,color=qqqqff] (0.32021009852853055,2.6305483908786607)-- (0.274452584628337,-1.8225355561791847);
\draw (7.80,4.74) node[anchor=north west] {$\mathbb{H}^2$ (Klein model)};
\draw [color=ccqqqq](0.42,0.25) node[anchor=north west] {$\lambda_-$};
\draw [color=qqqqff](4.74,3.36) node[anchor=north west] {$\lambda_+$};
\end{tikzpicture}
\label{fig:weakfilling}
\caption{The lower lamination (in red) and the upper (in blue) intersect every bi-infinite geodesic but the pair does not strongly fill.}
\end{figure}

Clearly, the pair $(\lambda_-, \lambda_+)$ does not strongly fill, since a long segment of a bending line of $\lambda_+$ can be disjoint from the support of $\lambda_-$.

Now consider a modified lamination $\lambda_+'$ obtained from $\lambda_+$ by adding an atomic weight to one leaf of the foliation. It is then clear that the pair $(\lambda_-,\lambda_+')$ weakly fills, in the sense of Remark \ref{rk:weak}. However it cannot be realized as measured bending laminations on the past and future boundary components of the convex hull of a curve, since in this case $\lambda_-$ by itself determines the curve and therefore the upper bending lamination, which needs to be equal to $\lambda_+$. 

\bibliographystyle{alpha}
\bibliography{biblio}

\newcommand{\etalchar}[1]{$^{#1}$}
\def\cprime{$'$}
\begin{thebibliography}{BDMS19}

\bibitem[ABB{\etalchar{+}}07]{mess-notes}
Lars Andersson, Thierry Barbot, Riccardo Benedetti, Francesco Bonsante,
  William~M. Goldman, Fran{\c{c}}ois Labourie, Kevin~P. Scannell, and Jean-Marc
  Schlenker.
\newblock Notes on: ``{L}orentz spacetimes of constant curvature'' [{G}eom.
  {D}edicata {\bf 126} (2007), 3--45; mr2328921] by {G}. {M}ess.
\newblock {\em Geom. Dedicata}, 126:47--70, 2007.

\bibitem[BDMS19]{convexhull}
Francesco Bonsante, Jeffrey Danciger, Sara Maloni, and Jean-Marc Schlenker.
\newblock The induced metric on the boundary of the convex hull of a
  quasicircle in hyperbolic and anti de sitter geometry.
\newblock {\em arXiv preprint arXiv:1902.04027}, 2019.

\bibitem[Ber70]{bers:universal}
Lipman Bers.
\newblock Universal {T}eichm\"{u}ller space.
\newblock In {\em Analytic methods in mathematical physics ({S}ympos.,
  {I}ndiana {U}niv., {B}loomington, {I}nd., 1968)}, pages 65--83. 1970.

\bibitem[BM12]{barbot-merigot}
Thierry Barbot and Quentin M{\'e}rigot.
\newblock Anosov {A}d{S} representations are quasi-{F}uchsian.
\newblock {\em Groups Geom. Dyn.}, 6(3):441--483, 2012.

\bibitem[BMS13]{cyclic}
Francesco Bonsante, Gabriele Mondello, and Jean-Marc Schlenker.
\newblock A cyclic extension of the earthquake flow {I}.
\newblock {\em Geom. Topol.}, 17(1):157--234, 2013.

\bibitem[BMS15]{cyclic2}
Francesco Bonsante, Gabriele Mondello, and Jean-Marc Schlenker.
\newblock A cyclic extension of the earthquake flow {II}.
\newblock {\em Ann. Sci. \'Ec. Norm. Sup\'er. (4)}, 48(4):811--859, 2015.

\bibitem[BO04]{bonahon-otal}
Francis Bonahon and Jean-Pierre Otal.
\newblock Laminations mesur\'ees de plissage des vari\'et\'es hyperboliques de
  dimension 3.
\newblock {\em Ann. Math.}, 160:1013--1055, 2004.

\bibitem[Bon92]{bonahon-extension}
Francis Bonahon.
\newblock Earthquakes on {R}iemann surfaces and on measured geodesic
  laminations.
\newblock {\em Trans. Amer. Math. Soc.}, 330(1):69--95, 1992.

\bibitem[BS10]{maximal}
Francesco Bonsante and Jean-Marc Schlenker.
\newblock Maximal surfaces and the universal {T}eichm\"uller space.
\newblock {\em Invent. Math.}, 182(2):279--333, 2010.

\bibitem[BS12]{earthquakes}
Francesco Bonsante and Jean-Marc Schlenker.
\newblock Fixed points of compositions of earthquakes.
\newblock {\em Duke Math. J.}, 161(6):1011--1054, 2012.

\bibitem[BS20]{bonsante-seppi:anti}
Francesco Bonsante and Andrea Seppi.
\newblock Anti-de sitter geometry and {Teichm\"uller} theory.
\newblock {\em arXiv preprint arXiv:2004.14414}, 2020.

\bibitem[Bus92]{buser:spectra}
Peter Buser.
\newblock {\em Geometry and spectra of compact {R}iemann surfaces}, volume 106
  of {\em Progress in Mathematics}.
\newblock Birkh\"auser Boston Inc., Boston, MA, 1992.

\bibitem[CS19]{hyperideal}
Qiyu Chen and Jean-Marc Schlenker.
\newblock Hyperideal polyhedra in the 3-dimensional anti-de sitter space.
\newblock {\em arXiv preprint arXiv:1904.09592}, 2019.

\bibitem[DMS20]{idealpolyhedra}
Jeffrey Danciger, Sara Maloni, and Jean-Marc Schlenker.
\newblock Polyhedra inscribed in a quadric.
\newblock {\em Invent. Math.}, 221(1):237--300, 2020.

\bibitem[FLP91]{FLP}
A.~Fathi, F.~Laudenbach, and V.~Poenaru.
\newblock {\em Travaux de {T}hurston sur les surfaces}.
\newblock Soci\'et\'e Math\'ematique de France, Paris, 1991.
\newblock S\'eminaire Orsay, Reprint of {\it Travaux de Thurston sur les
  surfaces}, Soc.\ Math.\ France, Paris, 1979 [MR 82m:57003], Ast\'erisque No.
  66-67 (1991).

\bibitem[FM07]{fletchermarkovic}
A.~Fletcher and V.~Markovic.
\newblock {\em Quasiconformal maps and {T}eichm\"{u}ller theory}, volume~11 of
  {\em Oxford Graduate Texts in Mathematics}.
\newblock Oxford University Press, Oxford, 2007.

\bibitem[FS19]{fillastreseppi}
Fran\c{c}ois Fillastre and Andrea Seppi.
\newblock Spherical, hyperbolic, and other projective geometries: convexity,
  duality, transitions.
\newblock In {\em Eighteen essays in non-{E}uclidean geometry}, volume~29 of
  {\em IRMA Lect. Math. Theor. Phys.}, pages 321--409. Eur. Math. Soc.,
  Z\"{u}rich, 2019.

\bibitem[GH02]{gardiner-harvey}
Frederick~P. Gardiner and William~J. Harvey.
\newblock Universal {T}eichm\"uller space.
\newblock In {\em Handbook of complex analysis: geometric function theory,
  {V}ol.\ 1}, pages 457--492. North-Holland, Amsterdam, 2002.

\bibitem[Mes07]{mess}
Geoffrey Mess.
\newblock Lorentz spacetimes of constant curvature.
\newblock {\em Geom. Dedicata}, 126:3--45, 2007.

\bibitem[Rat06]{ratcliffefoundations}
John~G. Ratcliffe.
\newblock {\em Foundations of hyperbolic manifolds}, volume 149 of {\em
  Graduate Texts in Mathematics}.
\newblock Springer, New York, second edition, 2006.

\bibitem[Thu86]{thurston-earthquakes}
William~P. Thurston.
\newblock Earthquakes in two-dimensional hyperbolic geometry.
\newblock In {\em Low-dimensional topology and Kleinian groups
  (Coventry/Durham, 1984)}, volume 112 of {\em London Math. Soc. Lecture Note
  Ser.}, pages 91--112. Cambridge Univ. Press, Cambridge, 1986.

\end{thebibliography}

\end{document}